\newcommand{\R}{{\mathbb R}}
\newcommand{\N}{{\mathbb N}}
\newcommand{\be}{\begin{eqnarray}}
\newcommand{\ben}{\begin{eqnarray*}}
\newcommand{\en}{\end{eqnarray}}
\newcommand{\enn}{\end{eqnarray*}}
\newcommand{\pa}{\partial}
\newcommand{\ov}{\overline}
\newcommand{\real}{{\rm Re\,}}
\newcommand{\s}{\mathbb{S}}
\newcommand{\supp}{{\rm supp}}
\newtheorem{thm}{Theorem}[section]
\newtheorem{lem}{Lemma}[section]
\newtheorem{defn}{Definition}[section]
\newtheorem{rem}{Remark}[section]
\definecolor{rot}{rgb}{1,0,0}
\definecolor{hw}{rgb}{0,0,1}
\begin{document}
\renewcommand{\theequation}{\arabic{section}.\arabic{equation}}
\begin{titlepage}
\title{\bf Direct sampling method to inverse wave-number-dependent source problems (part I): determination of the support of a stationary source}
\author{ Hongxia Guo
 \thanks{School of Mathematical Sciences and LPMC, Nankai University, 300071 Tianjin, China. ({\tt hxguo$\_$math@163.com})}
 \and Guanghui Hu \thanks{School of Mathematical Sciences and LPMC, Nankai University, 300071 Tianjin, China. ({\tt ghhu@nankai.edu.cn})}  \and Mengjie Zhao \thanks{ (Corresponding author) School of Mathematical Sciences and LPMC, Nankai University, 300071 Tianjin, China. ({\tt 1120200028@mail.nankai.edu.cn })}
}
\date{}
\end{titlepage}
\maketitle

%\vspace{.2in}

\begin{abstract}
This paper is concerned with a direct sampling method for imaging the support of a frequency-dependent source term embedded in a homogeneous and isotropic medium.
The source term is given by the Fourier transform of a time-dependent source whose radiating period in the time domain is known.
 The time-dependent source is supposed to be stationary in the sense that its compact support does not vary along the time variable.
 Via a multi-frequency direct sampling method, we show that the smallest strip containing the source support and perpendicular to the observation direction can be recovered from far-field patterns at a fixed observation angle. With multiple but sparse observations directions, the shape of the so-called $\Theta$-convex hull of the source support can be recovered. The frequency-domain analysis performed here can be used to handle inverse time-dependent source problems.
 Our algorithm has low computational overhead and is robust against noise. Numerical experiments in both two and three dimensions have proved our theoretical findings.

\vspace{.2in} {\bf Keywords: Inverse source problems, frequency-dependent source, multi-frequency data, Fourier transform, direct sampling method.
}
\end{abstract}

\section{Introduction}
\subsection{Mathematical formulation}
Let $0\leq k_{\min}<k_{\max}$ and $B_R=\{x\in \R^3: |x|<R\}$ for some $R>0$.
Consider a frequency-dependent radiating problem in a homogeneous background medium in three dimensions. The acoustic wave propagation can be governed by the inhomogeneous Helmholtz equation
\be\label{Helmholtz}\left\{\begin{array}{lll}
\Delta u(x, k)+k^2 u(x, k)=f(x, k), \quad & k\in (k_{\min}, k_{\max}),\; x\in \R^3,\\
\lim_{r\rightarrow\infty}r (\partial_r u-ik u)=0,\quad & r=|x|,
\end{array}\right.
\en
where $k>0$ is the wavenumber and $f$ is the frequency-dependent source term. The radiation condition at infinity in (\ref{Helmholtz}) is known as the
 Sommerfeld radiation condition, which holds uniformly in all directions $\hat{x}=x/|x|\in \s^2:=\{x\in \R^3: |x|=1\}$.
Throughout the paper we suppose that
 $f(\cdot, k)\in L^2(B_R)$, $\mbox{supp}f(\cdot, k)=D\subset B_R$  for all $k\in (k_{\min}, k_{\max})$. Here $D$ is supposed to be a bounded domain such that $\R^3\backslash\overline{D}$ is connected.
For every $k>0$,
it is well known that there exists a unique solution $u\in H^2(B_R)$ to \eqref{Helmholtz} with the explicit representation
\be\label{expression-w}
u(x,k)=\int_{D} \Phi^{(k)}(x,y)\, f(y,k)\,dy,\qquad x\in \R^3.
\en
Here, $\Phi^{(k)}$ is the fundamental solution to the Helmholtz equation $(\Delta+k^2)u=0$, given by
\ben
\Phi^{(k)}(x,y)=\frac{e^{ik |x-y|}}{4\pi |x-y|},\quad x\neq y.
\enn

The Sommerfeld radiation condition gives rise to the following asymptotic behavior of $u$ at infinity (see \cite{CK}):
\be\label{far-field}
u(x)=\frac{e^{ik|x|}}{4\pi|x|}\big\{u^\infty(\hat{x},k)+O(r^{-2})\big\}\quad\mbox{as}\quad|x|\rightarrow\infty,
\en
where $u^\infty(\cdot, k)\in C^\infty(\s^2)$ is defined as the
 far-field pattern (or scattering amplitude) of $u$. It is well known that the function  $\hat{x}\mapsto u^\infty(\hat{x}, k)$ is real-analytic on $\s^2$, where $\hat{x}\in \s^2$ is referred as the observation direction. By \eqref{expression-w}, the far-field pattern $u^\infty$ of $u$ can be expressed as
\be\label{u-infty}
u^\infty(\hat{x}, k)=\int_{D} e^{-ik\hat{x}\cdot y} f(y, k)\,dy,\quad \hat{x}\in \s^2,\quad k>0.
\en

In this paper we are interested in the following inverse problem: determine the position and shape of the support $D$ from knowledge of multi-frequency far-field patterns at sparse observation directions: $\{u^\infty(\hat{x}_j, k): k\in(k_{\min}, k_{\max}), j=1,2,\cdots, J\}$ for some $J\in \N$.
The aim of this paper is to explore a direct sampling method for imaging the support $D=\supp f(\cdot, k)$ from numerical point of view. As done in one of the authors' previous paper \cite{GGH}, the frequency-dependent source term is supposed to be a windowed Fourier transform of some time-dependent source term, i.e., there exists a time window $(t_{\min}, t_{\max})\subset \R_+$ such that
\be\label{fxt}
f(x,k)=\int_{t_{\min}}^{t_{\max}} F(x,t)e^{-ikt}\,dt, \quad x\in D,\;k\in(k_{\min}, k_{\max}).
\en
Here the function $F\in L^\infty(D\times (t_{\min}, t_{\max}))$ satisfies $\supp F(\cdot, t)=D$ for all $t\in(t_{\min}, t_{\max})$ and the time window $(t_{\min}, t_{\max})\subset \R^+$ is supposed to be available in advance. In this paper $F$
 is supposed to be real-valued  subject to the positivity condition
\be\label{F}
F(x, t)>0\qquad\mbox{a.e.}\; x\in D,\quad t\in (t_{\min}, t_{\max}).
\en
In particular, $F$ is allowed to vanish on the boundary of $D$ for $t\in (t_{\min}, t_{\max})$. It is obvious that the underlying source $F$ is stationary in the sense that the shape of its support does not vary with  the time.
By the assumption \eqref{fxt}, we have $f(x, -k)=\overline{f(x,k)}$ for all $k>0$ and thus $u^\infty(\hat{x}, -k)=\overline{u^\infty(\hat{x}, k)}$.

\subsection{Scientific context}
If $F(x,t)=f(x)\delta(t)$, it follows from \eqref{fxt} that $f(x,k)=f(x)$
is independent of frequencies. In this special case the far-field pattern $u^\infty$ given by \eqref{far-field} is nothing  else but the Fourier transform of the space-dependent function at the Fourier variable  $\xi=k\hat{x}\in \R^3$. For such space-dependent inverse source problems, it is well-known that measurement data at a single frequency is severely ill-posed and uniqueness is impossible due to the existence of non-radiating sources \cite{BC,EV09}. We refer to \cite{SK05,KS03,HL2020} for further discussions on recovering polygonal/polyhedral support of a source function with a single pair of data. To overcome the ill-posedness,
 a wide range of literatures are devoted to frequency-independent inverse problems from multi-frequency far-/near-field measurement data.
Uniqueness from the data of an interval of frequencies and observation angles follows straightforwardly, because
 the measurement data are analytic with respect to both the wavenumber $k>0$ and the observation angle. Consequently, a couple of iterative and non-iterative schemes (for example, sampling-type methods) have been proposed
for recovering the source function \cite{AKS, BLT10, CIL, EV09, ZG} as well as the shape of the support  \cite{AHLS,  Ik99, JLZ2019, GS, EH19, GH22}.
Concerning sampling-type methods to inverse obstacle scattering problems at a fixed energy, we refer the readers to the monographs \cite{CK,NP,KG08,P2001} for an overview. The direct sampling method \cite{CCH13,IJZ,L17,LMZ22} (which is also called orthogonal sampling method \cite{P10,G11}) has also drawn a lot of attentions, because it can be easily implemented and is rather robust against noise. 
The purpose of this paper is to explore a direct sampling scheme for imaging the support of a class of frequency-dependent source terms in the time-harmonic regime.

When $f$ depends on the wavenumber/frequency, the far-field pattern \eqref{u-infty} is no longer the Fourier transform of a source function, bringing essential difficulties in proving uniqueness and also in designing inversion schemes. The inverse medium scattering can be equivalently transformed into an inverse wave-number-dependent source problem.
To the best of the authors' knowledge, most existing methods cannot be carried over to this case for extracting information on the support and source functions. Although a direct sampling scheme \cite{AHLS} has been tested for recovering the shape of wave-number dependent sources,  the numerical examples there were not very satisfactory in comparison with the reconstructions for wave-number independent sources.
In \cite{GGH}, a factorization method has been established for recovering source terms of the form \eqref{fxt}, generalizing the earlier work of \cite{GS} to inverse wave-number-dependent source problems with a known radiating period $[t_{\min}, t_{\max}]$. As mentioned in the previous subsection, such kind of sources arises naturally from the Fourier transform of time-dependent source terms. Motivated by \cite{GGH} and \cite{AHLS}, we consider within this paper the same kind of wave-number-dependent source as \cite{GGH}. 
Our approach provides a tool for analyzing inverse time-dependent scattering problems in the frequency domain via Fourier transform.

We compare the factorization scheme of \cite{GGH} and the direct sampling method proposed in this paper as follows. Fixing an observation direction, one can apply both of them to recover the
the smallest strip containing the source support and perpendicular to the observation direction from the far-field data in two and three dimensions. Both of them are applicable to the near-field data at multi-frequencies in three dimensions.
The factorization method can be implemented with the data from any interval of frequencies, but the direct sampling scheme requires all frequencies in the positive axis. Like the multi-static case, the factorization scheme provides a sufficient and necessary condition for imaging the strip, while the direct sampling method yields only a sufficient condition. In this sense, the multi-frequency factorization method inherits the merits of its multi-static version, due to the solid analysis from  functional analysis. However, the direct sampling method can be more easily implemented, because it only involves inner product calculations. Further, the theoretical justification of the direct sampling, as done in this paper, can be performed under less restrictive mathematical conditions. For example, the positivity condition of the source function $F$ can be further relaxed (see Remark \ref{rem4.1}), but the source was required to be coercive in \cite{GGH, GS}.

In this paper the starting time point $t_{\min}$ and the terminal time point $t_{\max}$ of the source radiating period are both supposed to be a priori known. If one of them is given, we can also design a direct sampling scheme to recover the $\Theta$-convex hull of the connected support by increasing far-field observation directions. However, it remains unclear to us how to handle the case when neither of them is known. The direct sampling scheme explored within this paper, in particular the inversion theory from a single observation point, can be adapted to extract information on the trajectory of a moving source under additional constraints, which will be reported in our forthcoming papers.

The remaining part of this paper is organized as follows. In the subsequent two sections, we design a direct sampling scheme using multi-frequency far-field patterns at sparse observation direction in 2D. Numerical tests will be reported in Section \ref{sec:4} in the far-field case. In the final Section \ref{sec:5}, we extend the results of Sections \ref{sec:2} and \ref{sec:3} to the case of near-field measurement data in three dimensions.

\section{Inverse Fourier transform of far-field pattern}\label{sec:2}
Let $\hat{x}\in \mathbb{S}^2$ be an observation direction and $D\subset \mathbb{R}^3$ be a bounded domain with $C^2$-smooth boundary. Define
\ben
\hat{x}\cdot D=\{t\in \R: t=\hat{x}\cdot y\quad\mbox{for some}\quad y\in D\}.
\enn
Obviously, $\hat{x}\cdot D$ is an interval of $\R$.
Introduce the notations
\begin{equation}
      T:=t_{\max}-t_{\min}, \quad L:=\sup(\hat{x} \cdot D)-\inf(\hat{x} \cdot D),\quad \Lambda:=T+L.
\end{equation}
In this paper the one-dimensional Fourier and inverse transforms are defined respectively by
\ben
(\mathcal{F}g)(k):=\int_\R g(\xi)e^{-ik\xi}\,d\xi,\quad k\in \R,\\
(\mathcal{F}^{-1}f)(\xi):=\frac{1}{2\pi}\int_\R f(k)e^{ik\xi}\,dk,\quad \xi\in \R.\enn

\begin{defn}
The supporting interval of the function $f\in L^2(\R)$ is defined as the minimum interval such that $f$ vanishes almost every in the exterior of this interval.
\end{defn}

Below we describe the supporting interval of the inverse Fourier transform of the far-field pattern with respect to frequencies at a fixed observation direction.

\begin{lem}\label{21}
	Under the assumption $(\ref{F})$, the supporting interval of $(\mathcal{F}^{-1}u^{\infty}(\hat{x},k))(t)$ is $H:=\big(\inf(\hat{x}\cdot D)+t_{\min},\ \sup(\hat{x}\cdot D)+t_{\max}\big)$ and the function $t\longmapsto (\mathcal{F}^{-1}u^{\infty})(t)$ is positive in $H$.%, where $D\subset \mathbb{R}^3$ is a bounded domain with a connected component, $\hat{x}\in \mathbb{S}^2$ is a fixed observation direction of view.
\end{lem}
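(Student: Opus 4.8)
The plan is to first rewrite the far-field pattern, at the fixed observation direction $\hat x$, as a genuine one-dimensional Fourier transform of an explicit $L^1\cap L^\infty$ density, and then to read off both the supporting interval and the sign of its inverse Fourier transform directly from that density. After a rotation of the coordinates we may assume $\hat x=e_1$, so that $\hat x\cdot y=y_1$; writing $y=(y_1,y')\in\R\times\R^2$ and substituting \eqref{fxt} into \eqref{u-infty} (read for all $k\in\R$) gives
\[
u^\infty(\hat x,k)=\int_D\int_{t_{\min}}^{t_{\max}}F(y,t)\,e^{-ik(y_1+t)}\,dt\,dy .
\]
The substitution $s=y_1+t$, which has unit Jacobian, turns this into $u^\infty(\hat x,k)=(\mathcal F G)(k)$ with
\[
G(s):=\int_{\R^2}\int_\R F(y_1,y',\,s-y_1)\,\mathbf{1}_{D}(y_1,y')\,\mathbf{1}_{(t_{\min},t_{\max})}(s-y_1)\,dy_1\,dy' .
\]
Because $F\,\mathbf{1}_{D\times(t_{\min},t_{\max})}\in L^1(\R^4)$, Tonelli's theorem shows $G$ is well defined a.e.\ with $\|G\|_{L^1(\R)}=\|F\|_{L^1(D\times(t_{\min},t_{\max}))}$, and --- $D$ being bounded --- $\|G\|_{L^\infty(\R)}\le\|F\|_{L^\infty}\,|D|$; hence $G\in L^1(\R)\cap L^\infty(\R)\subset L^2(\R)$. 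Consequently $u^\infty(\hat x,\cdot)\in L^2(\R)$, Definition 2.1 applies, and the $L^2$ inversion theorem gives $(\mathcal F^{-1}u^\infty(\hat x,\cdot))(t)=G(t)$ for a.e.\ $t$; the lemma is thus a statement about $G$.

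For the supporting interval, observe that in the integrand of $G(s)$ one needs simultaneously $(y_1,y')\in D$, which forces $y_1\in\hat x\cdot D=(\inf(\hat x\cdot D),\sup(\hat x\cdot D))$, and $s-y_1\in(t_{\min},t_{\max})$; adding these two constraints gives $s\in H$, so $G$ vanishes a.e.\ outside $\overline H$. To see that no smaller interval works, I will prove $G>0$ a.e.\ on $H$. Fix $s\in H$ and put $E_s:=\{\,y\in D:\ s-t_{\max}<y_1<s-t_{\min}\,\}$, so that $G(s)=\int_{E_s}F(y,\,s-y_1)\,dy$. An elementary comparison of the two open intervals $(s-t_{\max},s-t_{\min})$ and $\hat x\cdot D$ --- using only $t_{\min}<t_{\max}$ together with the inequalities $\inf(\hat x\cdot D)+t_{\min}<s<\sup(\hat x\cdot D)+t_{\max}$ that define $H$ --- shows that their intersection is a non-empty open interval; since every point of $\hat x\cdot D$ is of the form $\hat x\cdot y$ for some $y\in D$ and $D$ is open, $E_s$ is then a non-empty open subset of $\R^3$, so $|E_s|>0$.

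It remains to transfer the pointwise positivity \eqref{F} from $D\times(t_{\min},t_{\max})$ onto the slices $E_s$. The exceptional set $N=\{(y,t)\in D\times(t_{\min},t_{\max}):F(y,t)\le 0\}\subset\R^4$ is Lebesgue null by \eqref{F}; the measure-preserving map $\Phi(y,t)=(y,\,y_1+t)$ carries $N$ onto a null set $\Phi(N)\subset\R^4$, and Fubini gives that for a.e.\ $s$ the slice $\{y\in\R^3:(y,s)\in\Phi(N)\}$ is Lebesgue null in $\R^3$, i.e.\ $F(y,\,s-y_1)>0$ for a.e.\ $y\in E_s$. Combined with $|E_s|>0$, this yields $G(s)>0$ for a.e.\ $s\in H$, which both identifies the supporting interval as $H$ and proves the positivity. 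I expect this last step to be the only genuinely delicate point: since $F$ is merely an $L^\infty$ function it is defined only up to a four-dimensional null set, so ``positivity on $D\times(t_{\min},t_{\max})$'' does not by itself say anything about the restriction of $F$ to the three-dimensional sheets $\{y_1+t=s\}$, and one has to route the argument through the change of variables $\Phi$ (which straightens these sheets) and Fubini rather than by naively evaluating $F$ along a sheet. The remaining ingredients --- the Fourier and Fubini interchanges, the interval arithmetic, and the reduction to $\hat x=e_1$ --- are routine.
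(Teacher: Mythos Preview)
Your proof is correct and follows the same strategy as the paper: write $u^\infty(\hat x,\cdot)$ as the Fourier transform of an explicit density obtained via the substitution $\xi=\hat x\cdot y+t$, and then read off both the support and the sign of the inverse Fourier transform directly from that density. The paper parametrizes the density through surface integrals over the level sets $\Gamma(t)=\{y\in D:\hat x\cdot y=t\}$ instead of your volume integral over $E_s$, and it invokes the positivity of $F$ pointwise without your measure-preserving change of variables and Fubini argument for the null set; your formulation is therefore a bit more rigorous on the a.e.\ issues and on the $L^2$ Fourier inversion, but the two arguments are otherwise equivalent.
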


\begin{proof} Combining \eqref{u-infty} and \eqref{fxt},
	the far-field pattern $u^{\infty}$ of $u$ can be expressed as
	\ben
		u^{\infty}(\hat{x},k)
		=\int_{D}f(y,k)e^{-ik\hat{x}\cdot y}dy
		=\int_{t_{\min}}^{t_{\max}} \int_{D} F(y,t) e^{-ik(\hat{x}\cdot y+t)}dy dt .
		\enn
To write the above expression as a Fourier transform, we observe that
\ben
\int_D F(y,t)e^{-ik(\hat{x}\cdot y+t)}\,dy=\int_\R e^{-ik \xi}\left(\int_{\Gamma(\xi-t)}F(y,t)ds(y)\right)\, d\xi
\enn
where $\Gamma(t)\subset D$ is defined as
    \be
    \Gamma(t):=\left\lbrace y\in D:\hat{x}\cdot y=t\right\rbrace , \quad t\in \R. \nonumber
    \en
Note that when $\Gamma(\xi-t)=\emptyset$, the aforementioned integral over $\Gamma(\xi-t)$ is taken as zero. Consequently,
\be\label{fourier}
		u^{\infty}(\hat{x},k)=\int_{\R} e^{-ik\xi} g(\xi) d\xi =(\mathcal{F}g)(k),
		\en
with
	\be\label{g}
	    g(\xi):=\int_{t_{\min}}^{t_{\max}} \int_{\Gamma(\xi-t)}F(y,t)ds(y)dt=\int_{\xi-t_{\max}}^{\xi-t_{\min}} \int_{\Gamma(t)} F(y,\xi-t)ds(y)dt.
    \en
    Since $\Gamma(t)=\emptyset$ for $t<\inf(\hat{x}\cdot D)$ and $t>\sup(\hat{x} \cdot D)$, it is obvious that
 \ben
    g(\xi)=0 \qquad\qquad\mbox{if} \quad \xi<\inf(\hat{x}\cdot D)+t_{\min}\quad\mbox{or}\quad \xi>\sup(\hat{x} \cdot D)+t_{\max}.
    \enn
It remains to prove
\be\label{positivity}
g(\xi)>0\qquad\mbox{if}\quad \xi\in
\big(\inf(\hat{x}\cdot D)+t_{\min},\; \sup(\hat{x} \cdot D)+t_{\max}\big).
\en
For such $\xi$, we suppose that
\be\label{xi}
\xi=\inf(\hat{x}\cdot D)+t_{\min}+\epsilon\quad\mbox{for some}\quad \epsilon\in(0, \Lambda).
\en
Then it holds that
\ben
\xi-t\in (t_{\min}, t_{\min}+\epsilon),\quad\mbox{if}\quad t\in\big(\inf(\hat{x}\cdot D),\; \inf(\hat{x}\cdot D)+\epsilon\big).
\enn
Now we rewrite $g$ as
\ben
        g(\xi)=\int_{\inf(\hat{x}\cdot D)}^{\inf(\hat{x}\cdot D)+\varepsilon}\int_{\Gamma(t)}F(y,\xi-t)ds(y)dt.
    \enn
Observing for any $\epsilon\in(0, \Lambda)$ that
\ben
\big(\inf(\hat{x}\cdot D),\; \inf(\hat{x}\cdot D)+\epsilon\big)\cap \big(\inf(\hat{x}\cdot D), \sup(\hat{x}\cdot D)\big)\neq \emptyset,\\
\big(t_{\min}, t_{\min}+\epsilon\big) \cap
\big(t_{\min}, t_{\max}\big)\neq \emptyset,
\enn
we deduce from the positivity assumption of $F$  that
$g$ must be positive in $\big(\inf(\hat{x}\cdot D)+t_{\min},\; \sup(\hat{x} \cdot D)+t_{\max}\big)$.
\end{proof}

In Lemma \ref{lem2} below, we assume that $D=D_1\cup D_2\subset\mathbb{R}^3$ consists of two disconnected components $D_1$ and $D_2$. We make the following assumption on the distance between $D_1$ and $D_2$:
\begin{description}
   %\item[(a)] We have $\hat{x} \cdot D_1 \cap \hat{x} \cdot D_2=\emptyset$ for all $\hat{x}\in\mathbb{S}^2 $ ;
   \item[Assumption (A):] It holds that
    \ben
    \sup(\hat{x}\cdot D_1)+t_{\max}<\inf(\hat{x}\cdot D_2)+t_{\min}\quad\mbox{or}\quad \sup(\hat{x}\cdot D_2)+t_{\max}<\inf(\hat{x}\cdot D_1)+t_{\min}.  \enn
In other words, $\inf(\hat{x}\cdot D_2)-\sup(\hat{x}\cdot D_1)>T$ or $\inf(\hat{x}\cdot D_1)-\sup(\hat{x}\cdot D_2)>T$. \end{description}
Physically, the Assumption (A) means that the wave signals radiated from $D_1$ and $D_2$ can be separated in the observation direction $\hat x$:
see Figs. \ref{fig:SatisfyA} and \ref{fig:NotA}. Otherwise, the support of the these waves can be overlapped along the propagating direction $\hat{x}$.

\begin{figure}
	\centering
	\begin{tikzpicture}
		\begin{axis}[
			axis lines=middle, xlabel={$x$},  ylabel={$y$},
			xmin=-6,xmax=6,
			ymin=-6,ymax=6,
			]
			\addplot [domain=0:2*pi,samples=100, line width=1,]({-3+cos (deg(x))+0.65*cos(2*deg(x))-0.65},{-3+1.5*sin (deg(x))});
			\addplot [domain=0:2*pi,samples=100, line width=1,]({2+1.5*cos (deg(x))},{2+0.5*sin (deg(x))});
			
		\end{axis}
		\draw [dashed, line width=1] (0.85, 0) -- (0.85, 6);
		\draw [dashed, line width=1] (2.3, 0) -- (2.3, 6);
		\fill [gray, opacity=0.2] (0.85, 0) -- (0.85, 6)-- (2.3, 6)--  (2.3, 0)--(0.85, 0);
		\node [below, font=\fontsize{6}{6}\selectfont] at (0.85, 0) {$\inf(\hat{x}\cdot D_1)$};
		\node [below, font=\fontsize{6}{6}\selectfont] at (2.3, 0) {$\sup(\hat{x}\cdot D_1)$};

		\draw [-, line width=1, color=blue] (1.15, 0.2) -- (1.15, 5.2);
		\draw [-, line width=1, color=red] (2.9, 0.2) -- (2.9, 5.2);
		\node [right, color=blue, font=\fontsize{6}{6}\selectfont,rotate=-90] at (1.3, 5) {$\inf(\hat{x}\cdot D_1)+t_{\min}$};
		\node [left, color=red, font=\fontsize{6}{6}\selectfont,rotate=-90] at (2.7, 2.65) {$\sup(\hat{x}\cdot D_1)+t_{\max}$};

		\draw [->, line width=1] (6.5,4 )--(7, 4);
		\node [right,font=\fontsize{6}{6}\selectfont,rotate=-90] at (7.3, 4.5) {$\hat{x}=(1,0)$};

		\draw [dashed, line width=1] (3.70, 0) -- (3.70, 6);
		\draw [dashed, line width=1] (5.45, 0) -- (5.45, 6);
		\fill [gray, opacity=0.8] (3.70, 0) -- (3.70, 6)-- (5.45, 6)--  (5.45, 0)--(3.70, 0);
		\node [below, font=\fontsize{6}{6}\selectfont] at (3.8, 0) {$\inf(\hat{x}\cdot D_2)$};
		\node [below, font=\fontsize{6}{6}\selectfont] at (5.5, 0) {$\sup(\hat{x}\cdot D_2)$};

		\draw [-, line width=1, color=blue] (4, 1) -- (4, 5.8);
		\draw [-, line width=1, color=red] (6.05, 1) -- (6.05, 5.8);
		\node [right, color=blue, font=\fontsize{6}{6}\selectfont,rotate=-90] at (4.2, 3.2) {$\inf(\hat{x}\cdot D_2)+t_{\min}$};
		\node [left, color=red, font=\fontsize{6}{6}\selectfont,rotate=-90] at (5.85, 0.85) {$\sup(\hat{x}\cdot D_2)+t_{\max}$};
		
		\node [above right] at (1.2, 1.1) {$D_1$};
		\node [above right] at (4.2, 3.45) {$D_2$};
	\end{tikzpicture}
	\caption{Examples of $D_1$ and $D_2$ satisfying  the Assumption (A). Here $t_{\min}=0.5$, $t_{\max}=1$, $D_1$ is kite-shaped, $D_2$ is elliptical and $\hat{x}=(1,0)$.
	} \label{fig:SatisfyA}
	
\end{figure}
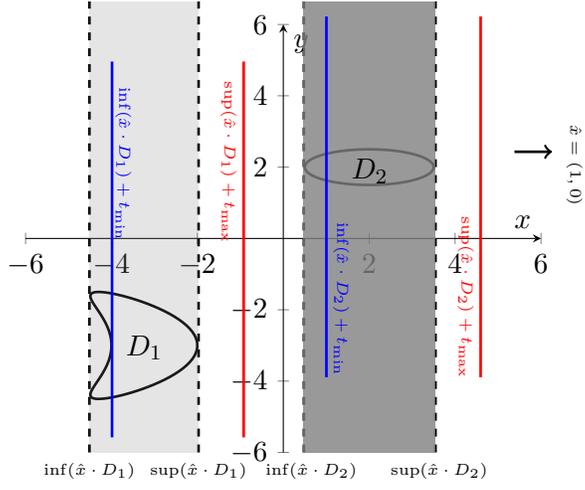

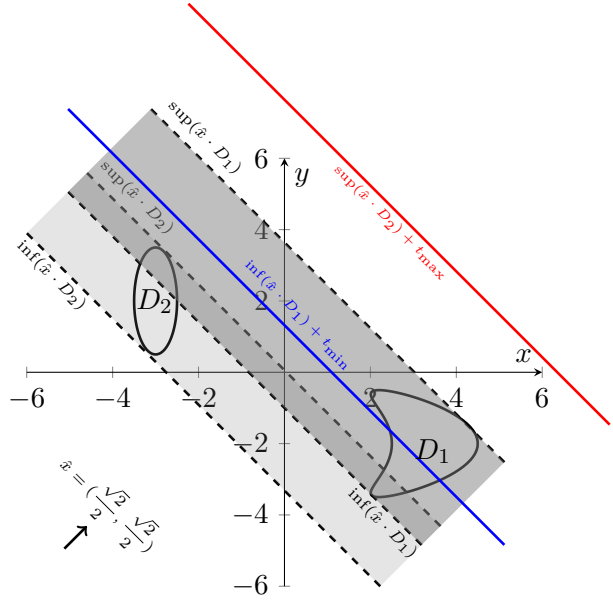
\begin{figure}
	\centering
	\begin{tikzpicture}
		\begin{axis}[
			axis lines=middle, xlabel={$x$},  ylabel={$y$},
			xmin=-6,xmax=6,
			ymin=-6,ymax=6,
			]
			\addplot [domain=0:2*pi,samples=100, line width=1,]({3.5+cos (deg(x))+0.65*cos(2*deg(x))-0.65},{-2+1.5*sin (deg(x))});
			\addplot [domain=0:2*pi,samples=100, line width=1,]({-3+0.5*cos (deg(x))},{2+1.5*sin (deg(x))});
			
		\end{axis}
		\draw [dashed, line width=1] (4.7, 0) -- (0, 4.7);
		\draw [dashed, line width=1] (5.5, 0.8) -- (0.8, 5.5);
		\fill [gray, opacity=0.2] (4.7, 0) -- (0, 4.7)-- (0.8, 5.5)--  (5.5, 0.8)--(4.7, 0);
		\node [below, font=\fontsize{6}{6}\selectfont,rotate=-45] at (0.5, 4.3) {$\inf(\hat{x}\cdot D_2)$};
		\node [above, font=\fontsize{6}{6}\selectfont,rotate=-45] at (1.3, 5) {$\sup(\hat{x}\cdot D_2)$};

		\draw [-, line width=1, color=red] (7.75, 2.15) -- (2.15, 7.75);
		%\draw [-, line width=1, color=red] (2.9, 0.2) -- (2.9, 5.2);
		\node [below, color=red, font=\fontsize{5}{6}\selectfont,rotate=-45] at (5, 5) {$\sup(\hat{x}\cdot D_2)+t_{\max}$};
		%	\node [above, color=red, font=\fontsize{5}{6}\selectfont] at (2.4, 5.2) {$\sup(\hat{x}\cdot D_1)+t_{\max}$};

		\draw [->, line width=1] (0.5,0.5 )--(0.8, 0.8);
		\node [above,font=\fontsize{6}{6}\selectfont,rotate=-45] at (0.8, 0.8) {$\hat{x}=(\dfrac{\sqrt{2}}{2},\dfrac{\sqrt{2}}{2})$};

		\draw [dashed, line width=1] (5.25, 0.55) -- (0.55, 5.25);
		\draw [dashed, line width=1] (6.35, 1.65) -- (1.65, 6.35);
		\fill [gray, opacity=0.5] (5.25, 0.55) -- (0.55, 5.25)-- (1.65, 6.35)--  (6.35, 1.65)--(5.25, 0.55);
		\node [below, font=\fontsize{6}{6}\selectfont,rotate=-45] at (4.9, 1) {$\inf(\hat{x}\cdot D_1)$};
		\node [above, font=\fontsize{6}{6}\selectfont,rotate=-45] at (2.2, 5.8) {$\sup(\hat{x}\cdot D_1)$};
		\draw [-, line width=1,color=blue] (6.35, 0.55) -- (0.55, 6.35);
		
		%	\draw [-, line width=1, color=blue] (4, 1.1) -- (4, 5.8);
		%	\draw [-, line width=1, color=red] (6.05, 0.9) -- (6.05, 5.8);
		\node [above, color=blue, font=\fontsize{5}{6}\selectfont,rotate=-45] at (3.45, 3.45) {$\inf(\hat{x}\cdot D_1)+t_{\min}$};
		%	\node [below, color=red, font=\fontsize{5}{6}\selectfont] at (6.3, 0.9) {$\sup(\hat{x}\cdot D_2)+t_{\max}$};
		
		\node [above] at (1.7, 3.5) {$D_2$};
		\node [above] at (5.4, 1.5) {$D_1$};
	\end{tikzpicture}
	\caption{Examples of $D_1$ and $D_2$ which do not satisfy the assumption (A). Here $t_{\min}=1$, $t_{\max}=5$ and $\hat{x}=(\sqrt{2}/2, \sqrt{2}/2)$. The acoustic waves radiated from $D_1$ and $D_2$ can be overlapped.
	} \label{fig:NotA}

\end{figure}

\begin{lem}\label{lem2}
	The supporting interval of $(\mathcal{F}^{-1}u^{\infty}(\hat{x},k))(t)$ is $H_1\cup H_2$ where $H_j=\big(\inf(\hat{x}\cdot D_j)+t_{\min},\ \sup(\hat{x}\cdot D_j)+t_{\max}\big)$ and the function $t\longmapsto (\mathcal{F}^{-1}u^{\infty})(t)$ is positive in $H_1\cup H_2$. Further,
$H_1\cup H_2$ is a subset of $H=\big(\inf(\hat{x}\cdot D)+t_{\min},\ \sup(\hat{x}\cdot D)+t_{\max}\big)$ under the Assumption (A), and $H_1\cup H_2=H$ if the Assumption (A) does not hold.

%where $D=\left\lbrace D_1\cup D_2\right\rbrace \subset \mathbb{R}^3$ is a bounded domain with two disconnected components, $\hat{x}\in \mathbb{S}^2$ is a fixed observation direction of view.
\end{lem}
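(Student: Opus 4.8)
The strategy is to reduce the statement to Lemma \ref{21} by exploiting the linearity of the map $F\mapsto u^\infty$ together with the positivity that Lemma \ref{21} supplies. Since $D_1$ and $D_2$ are disjoint, I would split the source density as $F=F|_{D_1}+F|_{D_2}$ (extended by zero outside each component) and, using \eqref{u-infty} and \eqref{fxt}, decompose the far-field pattern accordingly as $u^\infty(\hat x,k)=u_1^\infty(\hat x,k)+u_2^\infty(\hat x,k)$, where $u_j^\infty(\hat x,k):=\int_{D_j}e^{-ik\hat x\cdot y}f(y,k)\,dy$ is the far-field pattern of the frequency-dependent source supported in $D_j$. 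Each restricted density still satisfies the positivity hypothesis \eqref{F} on $D_j$ and has support exactly $D_j$, and $D_j$, being a connected component of the $C^2$-domain $D$, is itself a bounded $C^2$-domain. Hence Lemma \ref{21} applies to each component and yields that the supporting interval of $t\mapsto(\mathcal F^{-1}u_j^\infty)(t)$ equals $H_j$ and that $(\mathcal F^{-1}u_j^\infty)(t)>0$ for $t\in H_j$.

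By linearity of the inverse Fourier transform, $(\mathcal F^{-1}u^\infty)(t)=(\mathcal F^{-1}u_1^\infty)(t)+(\mathcal F^{-1}u_2^\infty)(t)$. The key observation, and the only point that goes beyond Lemma \ref{21}, is that both terms are nonnegative for \emph{every} $t\in\R$: they vanish outside $\overline{H_1}$ and $\overline{H_2}$ respectively, and are strictly positive inside $H_1$ and $H_2$ respectively. Therefore no cancellation can occur: the sum vanishes for $t\notin\overline{H_1\cup H_2}$ and is strictly positive for every $t\in H_1\cup H_2$. This already proves that $(\mathcal F^{-1}u^\infty)$ is supported in $\overline{H_1\cup H_2}$ and positive throughout $H_1\cup H_2$, which is the first assertion of the lemma.

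It then remains to compare $H_1\cup H_2$ with $H$. Since $D=D_1\cup D_2$, one has $\inf(\hat x\cdot D)=\min_j\inf(\hat x\cdot D_j)$ and $\sup(\hat x\cdot D)=\max_j\sup(\hat x\cdot D_j)$, so $H$ is precisely the smallest interval containing $H_1\cup H_2$, whence $H_1\cup H_2\subseteq H$ unconditionally. If Assumption (A) holds, say $\sup(\hat x\cdot D_1)+t_{\max}<\inf(\hat x\cdot D_2)+t_{\min}$, the right endpoint of $H_1$ lies strictly to the left of the left endpoint of $H_2$, so $H_1$ and $H_2$ are separated by a nonempty gap and the inclusion is strict. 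If Assumption (A) fails, then both reversed inequalities $\sup(\hat x\cdot D_1)+t_{\max}\ge\inf(\hat x\cdot D_2)+t_{\min}$ and $\sup(\hat x\cdot D_2)+t_{\max}\ge\inf(\hat x\cdot D_1)+t_{\min}$ hold, which forces $\overline{H_1}\cap\overline{H_2}\neq\emptyset$; consequently $\overline{H_1\cup H_2}$ is the interval $\overline{H}$ and $H_1\cup H_2$ coincides with $H$. I expect the only subtlety to be the borderline situation in which the two intervals merely abut, i.e. the right endpoint of one equals the left endpoint of the other, where $(\mathcal F^{-1}u^\infty)$ may possess an isolated zero; since a single point has measure zero this does not affect the supporting interval, and this case is absorbed into the conclusion ``$H_1\cup H_2=H$''. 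Everything else is immediate from Lemma \ref{21} and the positivity-forbids-cancellation argument.
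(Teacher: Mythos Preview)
Your proposal is correct and follows essentially the same approach as the paper: both decompose the far-field pattern linearly over the two components, apply Lemma~\ref{21} to each piece (the paper via the functions $g_j$ of \eqref{gj}, you via the component far-fields $u_j^\infty$, which are the same objects up to Fourier transform), and then use positivity to rule out cancellation. Your discussion of the comparison $H_1\cup H_2$ versus $H$---in particular the borderline abutting case---is in fact more careful than the paper's, which simply asserts the dichotomy without addressing that edge case.
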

\begin{rem}
The assumption (A) ensures that the intervals $H_1$ and $H_2$ have no intersections, i.e., $H_1\cap H_2=\emptyset$. In particular, $({x}\cdot D_1) \cap   (\hat{x}\cdot D_2)=\emptyset$.
\end{rem}

\begin{proof}
	As done in the proof of Lemma \ref{21}, the far-field far-field takes the form 
	\be\label{2.12}
	  u^{\infty}(\hat{x},k) \nonumber
	   &=&\int_{t_{\min}}^{t_{\max}} \int_{D_1\cup D_2} F(y,t) e^{-ik(\hat{x}\cdot    y+t)}dy dt  \\
	   &=&\int_{\R} e^{-ik\xi} g(\xi) d\xi =(\mathcal{F}g)(k).
	\en
Here $g=g_1+g_2$ with the functions $g_j$ ($j=1,2$) defined as
	\be\label{gj}
	g_j(\xi):=\int_{t_{\min}}^{t_{\max}} \int_{\Gamma_j(\xi-t)}F(y,t)ds(y)dt=\int_{\xi-t_{\max}}^{\xi-t_{\min}} \int_{\Gamma_j(t)} F(y,\xi-t)ds(y)dt.
	\en
The set $\Gamma_j(t)\subset D_j$ is defined as
	\be
	\Gamma_j(t):=\left\lbrace y\in D_j:\hat{x}\cdot y=t\right\rbrace ,\quad t\in \R. \nonumber
	\en
Hence, the inverse Fourier transform of the multi-frequency far-field patterns \eqref{2.12} is $\mathcal{F}^{-1}u^{\infty}=g=g_1+g_2$.
It follows from Lemma \ref{21} that $\supp (g_j)=H_j$ and $g_j>0$ in $H_j$ for $j=1,2$.
This proves  $\supp (\mathcal{F}^{-1} u^\infty)=\supp (g)=H_1\cup H_2$ and $g$ is positive in $H_1\cup H_2$.

 Noting that  $H_1\cap H_2=\emptyset$ under the assumption (A), we obtain that $H_1\cup H_2$ is a subset of $H$. If $H_1\cap H_2\neq \emptyset$, it is obvious that $H_1\cup H_2=H$.

%let us verify that the formula $\supp\ g(\xi)=\supp \ g_1(\xi) \cup \supp \ g_2(\xi)$ holds. We know the supporting of $g_j(\xi)$ is $H_j$ by lemma 2.1, it is obvious that $g_j>0$ on $H_j$. $\supp \ g_1(\xi) \cap \supp \ g_2(\xi)=\emptyset$ can be deduced simply from assumptions (a) and (b). Thus, we can establish that $\supp \ \mathcal{F}^{-1}u^{\infty}=\supp \ g=\supp \ g_1+\supp \ g_2=\bigcup  ^2_{j=1} H_j=H_0$.
\end{proof}
%\begin{rem} Let $D=D_1\cup D_2$.
%If the Assumption (A) does not hold at the observation direction $\hat{x}\in \s^2$, one can deduce that
%\be\label{Rem2.2}
%\supp (\mathcal{F}^{-1} u^\infty)\subset
%\big(\inf(\hat{x}\cdot D)+t_{\min},\ \sup(\hat{x}\cdot D)+t_{\max}\big).
%\en
%We remark that the interval on the right hand side of \eqref{Rem2.2} may be larger than $H_1\cup H_2$; see Fig. XXX.
%\end{rem}

\section{Indicator and test functions}\label{sec:3}
From Lemmas \ref{21} and \ref{lem2}, one can extract information on the interval $\big(\inf(\hat{x}\cdot D)+t_{\min},\ \sup(\hat{x}\cdot D)+t_{\max}\big)$
 by taking the inverse Fourier transform of $u^\infty (\hat{x}, k)$ for all $k>0$. Since $t_{\min}$ and $t_{\max}$ are both given, this obviously yields information on the strip $\hat{x}\cdot D$.
 In this section we shall design proper indicator functions to  image the support $\hat{x}\cdot D$ in a more straightforward manner.

 We first suppose that $D$ is connected. Given an observation direction $\hat{x}$, we want to design an indicator function $y\longmapsto I(y)$, $y\in \R^2$ such that
 \ben
 I(y)=\left\{\begin{array}{lll}
 0\quad&&\mbox{if} \quad\hat{x}\cdot y\in \hat{x}\cdot D,\\
\mbox{finite positive number} \quad&&\mbox{if \; otherwise}.
  \end{array}\right.
 \enn
Introduce two test functions
\ben
\phi^{(\hat{x})}_{1}(y,k):=e^{-ik\left( \hat{x}\cdot y+{t_{\min}}\right)},\quad
\phi^{(\hat{x})}_{2}(y,k):=e^{-ik\left( \hat{x}\cdot y+{t_{\max}}\right)},
\enn
and two auxiliary indicator functions
\be\label{Ind}
I^{(\hat{x})}_j(y) =\int_{\R}u^{\infty}(\hat{x},k) \ \overline{\phi_j^{(\hat{x})}(y,k)}\ dk, \quad j=1,2,
\en
where $y\in \R^2$ is referred to as the sampling variable. Below we characterize the support of $I^{(\hat{x})}_j$ ($j=1,2$). 
%In this section, a novel indicator function is introduced  to reconstructing the source information, we attempt to construct a function $\int_{\R}u^{\infty}(\hat{x},k) \ \overline{\phi^{(\hat{x})}(y,k)}\ dk $ for a fixed angle $\hat{x}$, where $y\in D$ and $\phi^{(\hat{x})}(y,k)$ is a test function.
%It is possible to choose a test function $\phi^{(\hat{x})}(y,k) $ such that $I^{(\hat{x})}(y)=0$ for $\hat{x}\cdot y \notin (\inf(\hat{x}\cdot D),\sup(\hat{x}\cdot D))$. In fact, we choose two test functions to build $I^{(\hat{x})}_1(y)$ and $I^{(\hat{x})}_2(y)$ such that $I^{(\hat{x})}(y):=1/\left( (I^{(\hat{x})}_1)^{-1}+(I^{(\hat{x})}_2)^{-1}\right) $ is supported on $(\inf(\hat{x}\cdot D),\sup(\hat{x}\cdot D))$. A direct sampling method is formulated straight away.

\begin{thm}\label{ID10} Let $\hat{x}\in \mathbb{S}^2$ be fixed.
We have
\be\label{In1}
I^{(\hat{x})}_1(y)%:=\int_{\R}u^{\infty}(\hat{x},k)\overline{\phi^{(\hat{x})}_{t_{\min}}(y,k)}\ dk
=\left\{\begin{array}{lll}
0\qquad\quad&&\mbox{for}\quad\hat{x}\cdot y \notin \big(\inf(\hat{x}\cdot D),\sup(\hat{x}\cdot D)+T\big),\\
 {\rm finite\; positive\; number} \qquad&&\mbox{for}\quad\hat{x}\cdot y \in \big(\inf(\hat{x}\cdot D),\sup(\hat{x}\cdot D)+T\big);
\end{array}\right. \\ \label{In2}
I^{(\hat{x})}_2(y)=\left\{\begin{array}{lll}
%:=\int_{\R}u^{\infty}(\hat{x},k)\overline{\phi^{(\hat{x})}_{t_{\max}}(y,k)}\ dk
 0\qquad\quad&&\mbox{for}\quad
 \hat{x}\cdot y \notin \big(\inf(\hat{x}\cdot D)-T,\sup(\hat{x}\cdot D)\big),\\
 {\rm finite\; positive\; number}\qquad&&\mbox{for}\quad
 \hat{x}\cdot y \in \big(\inf(\hat{x}\cdot D)-T,\sup(\hat{x}\cdot D)\big).
 \end{array}\right.
 \en
 % In fact, $ \supp \, I^{(\hat{x})}_1(y)=(\inf(\hat{x}\cdot D),\sup(\hat{x}\cdot D)+T)$ and $ \supp \, I^{(\hat{x})}_2(y)=(\inf(\hat{x}\cdot D)-T,\sup(\hat{x}\cdot D))$, where $D\subset \mathbb{R}^3$ is a bounded domain with a connected component, $\hat{x}\in \mathbb{S}^2$ is a fixed observation direction of view.
\end{thm}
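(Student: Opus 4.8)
The plan is to reduce the statement to Lemma~\ref{21} by recognizing the indicator functions $I^{(\hat x)}_j$ as convolutions (or shifted inverse Fourier transforms) of $u^\infty(\hat x,\cdot)$. First I would compute $I^{(\hat x)}_1(y)$ directly from its definition \eqref{Ind}. Since $\overline{\phi_1^{(\hat x)}(y,k)}=e^{ik(\hat x\cdot y+t_{\min})}$, we have
\be\label{plan-eq1}
I^{(\hat x)}_1(y)=\int_\R u^\infty(\hat x,k)\,e^{ik(\hat x\cdot y+t_{\min})}\,dk
=2\pi\,(\mathcal F^{-1}u^\infty(\hat x,\cdot))(\hat x\cdot y+t_{\min}),
\en
so $I^{(\hat x)}_1(y)=2\pi\, g(\hat x\cdot y+t_{\min})$ with $g=\mathcal F^{-1}u^\infty$ as in \eqref{fourier}--\eqref{g}. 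One subtlety to address up front: the integral in \eqref{Ind} is over all of $\R$, and for $k<0$ one uses the relation $u^\infty(\hat x,-k)=\overline{u^\infty(\hat x,k)}$ noted after \eqref{F}, so that $I^{(\hat x)}_1(y)$ is real; alternatively, \eqref{fourier} already holds for all $k\in\R$ since $g$ is real-valued, so \eqref{plan-eq1} is immediate. I would also remark that $g\in L^1(\R)$ because it is supported on the bounded interval $H$ and is bounded there (being built from $F\in L^\infty$ and surface integrals over bounded $\Gamma(t)$), which justifies evaluating the inverse Fourier transform pointwise.

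Next I would invoke Lemma~\ref{21}: the supporting interval of $g$ is $H=(\inf(\hat x\cdot D)+t_{\min},\ \sup(\hat x\cdot D)+t_{\max})$ and $g>0$ on $H$. Therefore $I^{(\hat x)}_1(y)=2\pi g(\hat x\cdot y+t_{\min})$ is a finite positive number exactly when $\hat x\cdot y+t_{\min}\in H$, i.e.\ when
\ben
\hat x\cdot y\in\big(\inf(\hat x\cdot D),\ \sup(\hat x\cdot D)+t_{\max}-t_{\min}\big)=\big(\inf(\hat x\cdot D),\ \sup(\hat x\cdot D)+T\big),
\enn
and is zero otherwise. This is exactly \eqref{In1}. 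The case $j=2$ is entirely analogous: $\overline{\phi_2^{(\hat x)}(y,k)}=e^{ik(\hat x\cdot y+t_{\max})}$ gives $I^{(\hat x)}_2(y)=2\pi\, g(\hat x\cdot y+t_{\max})$, which is positive precisely when $\hat x\cdot y+t_{\max}\in H$, i.e.\ when $\hat x\cdot y\in(\inf(\hat x\cdot D)-T,\ \sup(\hat x\cdot D))$, yielding \eqref{In2}.

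I anticipate that the only genuine obstacle is bookkeeping rather than mathematics: one must be careful that the integral \eqref{Ind} over $k\in\R$ (not just $k>0$) is what makes $I_j^{(\hat x)}$ coincide with $2\pi$ times a value of the real function $g$, and that finiteness of the pointwise value follows from $g\in L^1\cap L^\infty$. I would state these integrability facts as a short preliminary observation and then the theorem follows in two lines from Lemma~\ref{21}. If one instead wishes to work only with positive frequencies, the same conclusion holds upon replacing $\int_\R$ by $2\,\mathrm{Re}\int_0^\infty$ and using $u^\infty(\hat x,-k)=\overline{u^\infty(\hat x,k)}$; I would mention this equivalence in a remark so the reader sees the data requirement ``all $k>0$'' is what is actually used.
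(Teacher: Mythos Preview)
Your proposal is correct and follows essentially the same route as the paper: both reduce $I_j^{(\hat x)}(y)$ to an evaluation of $g$ at $\hat x\cdot y+t_{\min}$ (resp.\ $t_{\max}$) and then invoke Lemma~\ref{21}. The paper obtains \eqref{I1} via Parseval's identity after writing $\phi_j^{(\hat x)}$ as the Fourier transform of a shifted Dirac mass, whereas you recognize the defining integral directly as $2\pi$ times the inverse Fourier transform; your way is arguably cleaner and also catches the $2\pi$ factor that the paper's Parseval step silently drops (harmless for the positivity/vanishing conclusion). Your remarks on $g\in L^1\cap L^\infty$ and on the equivalence between $\int_\R$ and $2\,\mathrm{Re}\int_0^\infty$ are useful clarifications that the paper does not spell out.
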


\begin{proof}
Denote by $\delta(\cdot)$ the Dirac delta function.
The test functions $\phi^{(\hat{x})}_{j}(y,k)$ ($j=1,2$) can be rephrased as
	\be\begin{split}\label{test}
	\phi^{(\hat{x})}_{1}(y,k)
	= \int_{\R} e^{-ik\xi}\delta(\xi-\hat{x}\cdot y-t_{\min})  d\xi
	= [\mathcal{F}(\delta(\xi-\hat{x}\cdot y-t_{\min}))](k),\\
	\phi^{(\hat{x})}_{2}(y,k)
	= \int_{\R} e^{-ik\xi}\delta(\xi-\hat{x}\cdot y-t_{\max})  d\xi
	= [\mathcal{F}(\delta(\xi-\hat{x}\cdot y-t_{\max}))](k).
	\end{split}
	\en
	%Where
	%\be
	%\delta(x)=
	%\left\lbrace
	%\begin{array}{l}
	%	+\infty, \qquad x=0,\\
	%	0, \qquad \quad \ x \neq 0,
	%\end{array}
	%\right. 	
	%\en
   %and $\mathcal{F}$ is the Fourier transform.
   Applying the Parserval's identity and properties of Fourier transform, we deduce from \eqref{fourier} and \eqref{Ind} that
	 \be \label{I1}
	I^{(\hat{x})}_1(y)&=&\int_{\R}u^{\infty}(\hat{x},k)  \overline{\phi^{(\hat{x})}_{1}(k)} dk \nonumber \\
	&=& \int_{\R}[\mathcal{F}g(\xi)](k) \; \overline{[\mathcal{F}(\delta(\xi-\hat{x}\cdot y-t_{\min}))](k)} dk \nonumber \\
	&=& \int_{\R} g(\xi)\; \delta(\xi-\hat{x}\cdot y-t_{\min}) d \xi \nonumber \\
	&=& g(\hat{x}\cdot y+t_{\min}).
	\en
	where $g$ is defined by \eqref{g}.
	From Lemma \ref{21}, we know $ \supp\, g=\big(\inf(\hat{x}\cdot D)+t_{\min},\ \sup(\hat{x}\cdot D)+t_{\max}\big)$ and $g$ is positive in $\supp \,g$. Therefore,
	$g(\hat{x}\cdot y+t_{\min})>0$ for all $y\in \R^3$ such that
	$\hat{x}\cdot y \in \big(\inf(\hat{x}\cdot D),\sup(\hat{x}\cdot D)+T\big)$. On the other hand, it is also obvious that 	
$g(\hat{x}\cdot y+t_{\min})=0$ for all
	$\hat{x}\cdot y \notin \big(\inf(\hat{x}\cdot D),\sup(\hat{x}\cdot D)+T\big)$, which proves \eqref{In1}. The results in \eqref{In2} can be verified analogously.	
%	 we know $I^{(\hat{x})}_1(y)=0$ for $\hat{x}\cdot y+t_{\min}\notin \supp g(\cdot)$. If $y\in D$, $t\in (t_{\min},t_{\max})$, we have it is obvious that $I^{(\hat{x})}_1(y)>0$ by $(\ref{I1})$. Therefore, $ \supp \,I^{(\hat{x})}_1(y)=(\inf(\hat{x}\cdot D),\sup(\hat{x}\cdot D)+T)$ be deduced.
%	\par In the same way, $\supp\,I^{(\hat{x})}_2(y)=(\inf(\hat{x}\cdot D)-T,\sup(\hat{x}\cdot D))$ can be obtained for the reason that
%	\be \label{I1}
%	I^{(\hat{x})}_2(y)=\int_{\R}u^{\infty}(\hat{x},k)  %\overline{\phi^{(\hat{x})}_{t_{\max}}(k)} dk
%	= g(\hat{x}\cdot y+t_{\max}).
%	\en
\end{proof}
The indicator function for imaging $\hat{x}\cdot D$ is defined as follows:
\ben
I^{(\hat{x})}(y):=\left[\frac{1}{I_1^{(\hat{x})}(y)}+  \frac{1}{I_2^{(\hat{x})}(y)}   \right]^{-1}=\frac{I_1^{(\hat{x})}(y)\; I_2^{(\hat{x})}(y)}{I_1^{(\hat{x})}(y)+I_2^{(\hat{x})}(y)}.
\enn
Now we state the indicating behavior of $I^{(\hat{x})}$.
\begin{thm}\label{ID1} Let $\hat{x}\in \mathbb{S}^2$ be fixed and assume that $D$ is connected. Then
\be\label{Indicator}
I^{(\hat{x})}(y)%:=\int_{\R}u^{\infty}(\hat{x},k)\overline{\phi^{(\hat{x})}_{t_{\min}}(y,k)}\ dk
=\left\{\begin{array}{lll}
0\qquad\quad&&\mbox{for}\quad\hat{x}\cdot y \notin \hat{x}\cdot D,\\
{\rm finite\; positive\; number} \qquad&&\mbox{for}\quad\hat{x}\cdot y \in \hat{x}\cdot D.
\end{array}\right.
\en
\end{thm}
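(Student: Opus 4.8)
The plan is to obtain Theorem \ref{ID1} directly from Theorem \ref{ID10} by tracking how the reciprocal-sum combination $I^{(\hat{x})}=\big(1/I_1^{(\hat{x})}+1/I_2^{(\hat{x})}\big)^{-1}$ inherits the indicating behavior of its two ingredients. First I would record the two sign patterns furnished by Theorem \ref{ID10}: the function $I_1^{(\hat{x})}(y)$ is a finite positive number precisely when $\hat{x}\cdot y\in\big(\inf(\hat{x}\cdot D),\ \sup(\hat{x}\cdot D)+T\big)$ and vanishes otherwise, while $I_2^{(\hat{x})}(y)$ is a finite positive number precisely when $\hat{x}\cdot y\in\big(\inf(\hat{x}\cdot D)-T,\ \sup(\hat{x}\cdot D)\big)$ and vanishes otherwise.

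Next I would analyze the combination case by case. If at least one of $I_1^{(\hat{x})}(y)$, $I_2^{(\hat{x})}(y)$ equals zero, then the product representation $I^{(\hat{x})}(y)=I_1^{(\hat{x})}(y)\,I_2^{(\hat{x})}(y)/\big(I_1^{(\hat{x})}(y)+I_2^{(\hat{x})}(y)\big)$ gives $I^{(\hat{x})}(y)=0$; to cover the degenerate situation in which \emph{both} vanish, one reads the definition in its first form $\big[1/I_1^{(\hat{x})}(y)+1/I_2^{(\hat{x})}(y)\big]^{-1}$, where each reciprocal is $+\infty$ and hence $I^{(\hat{x})}(y)=0$ as well (equivalently, one simply sets $I^{(\hat{x})}(y):=0$ whenever $I_1^{(\hat{x})}(y)\,I_2^{(\hat{x})}(y)=0$). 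If instead both $I_1^{(\hat{x})}(y)$ and $I_2^{(\hat{x})}(y)$ are finite positive numbers, then so is their reciprocal-sum fusion. Consequently $I^{(\hat{x})}(y)$ is a finite positive number if and only if $y$ lies in the intersection of the two strips above, and is zero otherwise.

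Finally I would compute this intersection. Since $T=t_{\max}-t_{\min}>0$, we have
\[
\big(\inf(\hat{x}\cdot D),\ \sup(\hat{x}\cdot D)+T\big)\ \cap\ \big(\inf(\hat{x}\cdot D)-T,\ \sup(\hat{x}\cdot D)\big)=\big(\inf(\hat{x}\cdot D),\ \sup(\hat{x}\cdot D)\big),
\]
because the left endpoint of the intersection is $\max\{\inf(\hat{x}\cdot D),\ \inf(\hat{x}\cdot D)-T\}=\inf(\hat{x}\cdot D)$ and the right endpoint is $\min\{\sup(\hat{x}\cdot D)+T,\ \sup(\hat{x}\cdot D)\}=\sup(\hat{x}\cdot D)$. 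Since $D$ is a bounded domain (hence an open, connected set) and is assumed connected here, its orthogonal projection $\hat{x}\cdot D$ onto the line $\R\hat{x}$ is an open interval, namely $\hat{x}\cdot D=\big(\inf(\hat{x}\cdot D),\ \sup(\hat{x}\cdot D)\big)$. Combining the last two observations yields that $I^{(\hat{x})}(y)$ is a finite positive number exactly when $\hat{x}\cdot y\in\hat{x}\cdot D$ and vanishes otherwise, which is \eqref{Indicator}.

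I do not expect a genuine obstacle: the only mildly delicate points are the $0/0$ ambiguity in the product form and the bookkeeping at the endpoints of the two strips, both resolved by reading $I^{(\hat{x})}$ through its reciprocal-sum definition and by noting that $\hat{x}\cdot D$ is open, so its boundary is irrelevant to the statement. The substantive analysis has already been carried out in Lemma \ref{21} and Theorem \ref{ID10}; Theorem \ref{ID1} is the clean geometric consequence that intersecting the two "one-sided thickened" strips $\big(\inf(\hat{x}\cdot D),\ \sup(\hat{x}\cdot D)+T\big)$ and $\big(\inf(\hat{x}\cdot D)-T,\ \sup(\hat{x}\cdot D)\big)$ recovers the true strip $\hat{x}\cdot D$, with the known window length $T$ absorbed symmetrically from the two sides.
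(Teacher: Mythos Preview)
Your proposal is correct and follows essentially the same approach as the paper's proof: both derive the result from Theorem \ref{ID10} by observing that the intersection of the two shifted intervals $\big(\inf(\hat{x}\cdot D),\sup(\hat{x}\cdot D)+T\big)$ and $\big(\inf(\hat{x}\cdot D)-T,\sup(\hat{x}\cdot D)\big)$ equals $\hat{x}\cdot D$, and that $I^{(\hat{x})}$ vanishes whenever one of $I_1^{(\hat{x})},I_2^{(\hat{x})}$ does. Your write-up is in fact slightly more careful than the paper's, as you explicitly address the $0/0$ ambiguity and justify why $\hat{x}\cdot D$ coincides with the open interval $\big(\inf(\hat{x}\cdot D),\sup(\hat{x}\cdot D)\big)$.
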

\begin{proof}
It is obvious that
$$\hat{x}\cdot D = \big(\inf(\hat{x}\cdot D)-T,\sup(\hat{x}\cdot D)\big)\,\cap \, \big(\inf(\hat{x}\cdot D),\sup(\hat{x}\cdot D)+T\big).$$
Hence, if $\hat{x}\cdot y\in \hat{x}\cdot D$, one deduces from Theorem \ref{ID10} that $0<I_j^{(\hat{x})}(y)<\infty$ for $j=1,2$, implying  $0<I^{(\hat{x})}(y)<\infty$.

On the other hand, if $\hat{x}\cdot y\notin \hat{x}\cdot D$, we have either $\hat{x}\cdot y \notin \big(\inf(\hat{x}\cdot D)-T,\sup(\hat{x}\cdot D)\big)$ or
$\hat{x}\cdot y \notin \big(\inf(\hat{x}\cdot D),\sup(\hat{x}\cdot D)+T\big)$.
In the former case, we get $I_1^{(\hat{x})}(y)=0$ and in the latter case $I_2^{(\hat{x})}(y)=0$. Hence, there must hold  $I^{(\hat{x})}(y)=0$ for $\hat{x}\cdot y\notin \hat{x}\cdot D$.
\end{proof}

If $D$ consists of two components, we can get analogous results to Theorem \ref{ID1}. In the following theorem,
 the intervals $H_1$ and $H_2$ corresponding to $D_1$ and $D_2$ are defined as in Lemma \ref{lem2}.

%Similarly,  we have same conclusion for the bounded domain $D=\left\lbrace D_1\cup D_2\right\rbrace \subset \mathbb{R}^3$ with two disconnected components. It is possible to choose some test functions such that $I^{(\hat{x})}(y)=0$ for $\hat{x}\cdot y \notin (\inf(\hat{x}\cdot D_1),\sup(\hat{x}\cdot D_1))\cup (\inf(\hat{x}\cdot D_2),\sup(\hat{x}\cdot D_2))$. In fact, $ \supp \, I^{(\hat{x})}(y)=(\inf(\hat{x}\cdot D_1),\sup(\hat{x}\cdot D_1))\cup (\inf(\hat{x}\cdot D_2),\sup(\hat{x}\cdot D_2)) $ and a direct sampling method is naturally formulated.

\begin{thm}\label{ID2}
Let $D=D_1\cup D_2$ where $D_j\subset \R^3$ are  bounded domains such that $D_1\cap D_2=\emptyset$ and let $\hat{x}\in \mathbb{S}^2$ be fixed. Under the Assumption (A), we have
\be\label{Indicator2}
I^{(\hat{x})}(y)%:=\int_{\R}u^{\infty}(\hat{x},k)\overline{\phi^{(\hat{x})}_{t_{\min}}(y,k)}\ dk
=\left\{\begin{array}{lll}
0\qquad\quad&&\mbox{for}\quad\hat{x}\cdot y \notin \bigcup_{j=1,2}\{\hat{x}\cdot D_j\},\\
{\rm finite\; positive\; number} \qquad&&\mbox{for}\quad\hat{x}\cdot y \in \bigcup_{j=1,2}\{\hat{x}\cdot D_j\}.
\end{array}\right.
\en
If the Assumption (A) does not hold, then $I^{(\hat{x})}(y)=0$ for $\hat{x}\cdot y \notin \hat{x}\cdot D$.
%	Selecting the test function $\phi^{(\hat{x})}_{t_{\min}}(y,k)$ and $\phi^{(\hat{x})}_{t_{\max}}(y,k)$ as theorem \ref{ID1} , we have $I^{(\hat{x})}_1(y)=0$ for $\hat{x}\cdot y \notin (\inf(\hat{x}\cdot D_1),\sup(\hat{x}\cdot D_1)+T)\cup (\inf(\hat{x}\cdot D_2),\sup(\hat{x}\cdot D_2)+T) $, $I^{(\hat{x})}_2(y)=0$ for $\hat{x}\cdot y \notin (\inf(\hat{x}\cdot D_1)-T,\sup(\hat{x}\cdot D_1))\cup (\inf(\hat{x}\cdot D_2)-T,\sup(\hat{x}\cdot D_2)) $. In fact, $ \supp \, I^{(\hat{x})}(y)=(\inf(\hat{x}\cdot D_1),\sup(\hat{x}\cdot D_1))\cup (\inf(\hat{x}\cdot D_2),\sup(\hat{x}\cdot D_2)) $, where $\hat{x}\in \mathbb{S}^2$ is a fixed observation direction of view.
\end{thm}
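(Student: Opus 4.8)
The plan is to repeat the argument of Theorem~\ref{ID1} with Lemma~\ref{21} replaced by Lemma~\ref{lem2}. The Parseval computation in the proof of Theorem~\ref{ID10} used only the representation $u^{\infty}(\hat{x},k)=(\mathcal{F}g)(k)$ together with the identities $\phi^{(\hat{x})}_{1}(y,k)=[\mathcal{F}(\delta(\cdot-\hat{x}\cdot y-t_{\min}))](k)$ and $\phi^{(\hat{x})}_{2}(y,k)=[\mathcal{F}(\delta(\cdot-\hat{x}\cdot y-t_{\max}))](k)$, so it applies verbatim here with $g=g_1+g_2$ as in \eqref{2.12}--\eqref{gj}, giving
\[
I^{(\hat{x})}_1(y)=g(\hat{x}\cdot y+t_{\min}),\qquad I^{(\hat{x})}_2(y)=g(\hat{x}\cdot y+t_{\max}).
\]
By Lemma~\ref{lem2}, under Assumption~(A) we have $g\geq 0$, $\supp g=H_1\cup H_2$, and $g>0$ exactly on $H_1\cup H_2$, where $H_j=(\inf(\hat{x}\cdot D_j)+t_{\min},\ \sup(\hat{x}\cdot D_j)+t_{\max})$; as in Theorem~\ref{ID10}, $g$ is bounded, so $I^{(\hat{x})}_j$ is finite everywhere.

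Since $I^{(\hat{x})}(y)=[1/I^{(\hat{x})}_1(y)+1/I^{(\hat{x})}_2(y)]^{-1}$, we have $0<I^{(\hat{x})}(y)<\infty$ precisely when $I^{(\hat{x})}_1(y)>0$ and $I^{(\hat{x})}_2(y)>0$, while $I^{(\hat{x})}(y)=0$ as soon as one factor vanishes. Pulling the support of $g$ back through the two shifts, $I^{(\hat{x})}_1(y)>0$ iff $\hat{x}\cdot y\in\bigcup_{j}(H_j-t_{\min})$ and $I^{(\hat{x})}_2(y)>0$ iff $\hat{x}\cdot y\in\bigcup_{j}(H_j-t_{\max})$, with $H_j-t_{\min}=(\inf(\hat{x}\cdot D_j),\ \sup(\hat{x}\cdot D_j)+T)$ and $H_j-t_{\max}=(\inf(\hat{x}\cdot D_j)-T,\ \sup(\hat{x}\cdot D_j))$. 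Hence $I^{(\hat{x})}(y)>0$ iff $\hat{x}\cdot y$ lies in the intersection of these two unions. Distributing, this intersection is the union of the two ``diagonal'' sets $(H_j-t_{\min})\cap(H_j-t_{\max})$, $j=1,2$, and the two ``cross'' sets $(H_1-t_{\min})\cap(H_2-t_{\max})$ and $(H_2-t_{\min})\cap(H_1-t_{\max})$. Exactly as in the proof of Theorem~\ref{ID1}, each diagonal set equals $(\inf(\hat{x}\cdot D_j),\sup(\hat{x}\cdot D_j))=\hat{x}\cdot D_j$, so the first assertion of \eqref{Indicator2} reduces to showing that the two cross sets are empty.

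This is the main obstacle. Assume, without loss of generality, the first alternative in Assumption~(A): $\sup(\hat{x}\cdot D_1)+t_{\max}<\inf(\hat{x}\cdot D_2)+t_{\min}$, i.e.\ $\sup(\hat{x}\cdot D_1)+T<\inf(\hat{x}\cdot D_2)$, so $\hat{x}\cdot D_1$ lies strictly left of $\hat{x}\cdot D_2$. Then $(H_2-t_{\min})\cap(H_1-t_{\max})=\emptyset$ is immediate, because $H_2-t_{\min}$ starts at $\inf(\hat{x}\cdot D_2)$ while $H_1-t_{\max}$ ends at $\sup(\hat{x}\cdot D_1)<\inf(\hat{x}\cdot D_2)$. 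The remaining set $(H_1-t_{\min})\cap(H_2-t_{\max})$ is the delicate one, since $H_1-t_{\min}$ inflates $\hat{x}\cdot D_1$ to the right by $T$ and $H_2-t_{\max}$ inflates $\hat{x}\cdot D_2$ to the left by $T$; ruling it out amounts to comparing $\sup(\hat{x}\cdot D_1)+T$ with $\inf(\hat{x}\cdot D_2)-T$. I expect this endpoint comparison to be where Assumption~(A) is used most carefully, and one should check whether the separation $>T$ it provides already forces emptiness or whether the sharp characterization needs separation $\geq 2T$; in the latter situation the first line of \eqref{Indicator2} is to be read as a sufficient condition, namely $\hat{x}\cdot y\in\hat{x}\cdot D_j\Rightarrow I^{(\hat{x})}(y)>0$, together with $I^{(\hat{x})}(y)=0$ outside a controlled neighbourhood of $\hat{x}\cdot D$. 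The second alternative in Assumption~(A) is handled by interchanging $D_1$ and $D_2$.

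Finally, if Assumption~(A) fails, Lemma~\ref{lem2} gives $\supp g=H_1\cup H_2=H=(\inf(\hat{x}\cdot D)+t_{\min},\ \sup(\hat{x}\cdot D)+t_{\max})$ with $g>0$ on $H$, which is precisely the situation of Lemma~\ref{21}. Then $I^{(\hat{x})}_1(y)=g(\hat{x}\cdot y+t_{\min})=0$ whenever $\hat{x}\cdot y\leq\inf(\hat{x}\cdot D)$, and $I^{(\hat{x})}_2(y)=g(\hat{x}\cdot y+t_{\max})=0$ whenever $\hat{x}\cdot y\geq\sup(\hat{x}\cdot D)$; since any $y$ with $\hat{x}\cdot y\notin\hat{x}\cdot D=(\inf(\hat{x}\cdot D),\sup(\hat{x}\cdot D))$ satisfies one of these, we get $I^{(\hat{x})}(y)=0$ for such $y$, which is the last assertion of the theorem.
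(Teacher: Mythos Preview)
Your approach is exactly the paper's: compute $I^{(\hat{x})}_1(y)=g(\hat{x}\cdot y+t_{\min})$ and $I^{(\hat{x})}_2(y)=g(\hat{x}\cdot y+t_{\max})$ with $g=g_1+g_2$, invoke Lemma~\ref{lem2} for the support and positivity of $g$, and then combine the two shifted supports. The paper writes this out in the displayed form
\[
I^{(\hat{x})}_1(y)>0\;\Longleftrightarrow\;\hat{x}\cdot y\in\bigcup_{j=1,2}\big(\inf(\hat{x}\cdot D_j),\ \sup(\hat{x}\cdot D_j)+T\big),
\]
and similarly for $I^{(\hat{x})}_2$, and then simply asserts \eqref{Indicator2} ``by the definition of $I^{(\hat{x})}$''; it does \emph{not} explicitly treat the cross intersections you isolated.

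Your caution about the cross term is well placed. Under the first alternative of Assumption~(A) one has $\inf(\hat{x}\cdot D_2)-\sup(\hat{x}\cdot D_1)>T$, and the cross set
\[
(H_1-t_{\min})\cap(H_2-t_{\max})=\big(\inf(\hat{x}\cdot D_2)-T,\ \sup(\hat{x}\cdot D_1)+T\big)
\]
is empty precisely when the gap is at least $2T$. When the gap lies in $(T,2T)$ this set is a nonempty subinterval of the gap $\big(\sup(\hat{x}\cdot D_1),\inf(\hat{x}\cdot D_2)\big)$, hence disjoint from $\hat{x}\cdot D_1\cup\hat{x}\cdot D_2$, yet $I^{(\hat{x})}(y)>0$ there. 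So the vanishing assertion in the first line of \eqref{Indicator2} genuinely requires separation $\geq 2T$; Assumption~(A) as stated only guarantees the positivity assertion (second line) and that $H_1\cap H_2=\emptyset$. The paper's proof skips over this point, so your proposal is at least as complete as the paper's, and you have correctly pinpointed where the sharp characterization would need a stronger hypothesis. Your treatment of the case where Assumption~(A) fails matches the paper's and is correct.
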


\begin{proof} Let $g_1$ and $g_2$ be defined as in \eqref{gj} with the support $\supp (g_j)=H_j$ for $j=1,2$. Analogously to the proof of Theorem \ref{ID1}, we obtain
\ben
&&I_1^{(\hat{x})}(y)=g_1(\hat{x}\cdot y+t_{\min})+g_2(\hat{x}\cdot y+t_{\min}), \\ &&I_2^{(\hat{x})}(y)=g_1(\hat{x}\cdot y+t_{\max})+g_2(\hat{x}\cdot y+t_{\max}).
\enn
Under the Assumption (A), it follows from Lemma \ref{lem2} that $\supp (g_1+g_2)=H_1\cup H_2$ and $g_1+g_2$ is positive in $H_1\cup H_2$. Hence,
\ben
I^{(\hat{x})}_1(y)=
0\quad\mbox{if}\quad\hat{x}\cdot y \notin \bigcup_{j=1,2}\big(\inf(\hat{x}\cdot D_j),\;\sup(\hat{x}\cdot D_j)+T\big), \\
0<I^{(\hat{x})}_1(y)<\infty
\quad\mbox{if}\quad\hat{x}\cdot y \in \bigcup_{j=1,2}\big(\inf(\hat{x}\cdot D_j),\;\sup(\hat{x}\cdot D_j)+T\big),\\
I^{(\hat{x})}_2(y)=
0\quad\mbox{if}\quad\hat{x}\cdot y \notin \bigcup_{j=1,2}\big(\inf(\hat{x}\cdot D_j)-T,\;\sup(\hat{x}\cdot D_j)\big), \\
0<I^{(\hat{x})}_1(y)<\infty
\quad\mbox{if}\quad\hat{x}\cdot y \in \bigcup_{j=1,2}\big(\inf(\hat{x}\cdot D_j)-T,\;\sup(\hat{x}\cdot D_j)\big).
\enn
By the definition of $I^{(\hat{x})}$, one can get the relations shown in \eqref{Indicator2} under the Assumption (A).  Without the Assumption (A), we deduce from the proof of
Theorem \ref{ID1}
 that
\ben
\supp (I_1^{(\hat{x})})\,\subset\, \big(\inf(\hat{x}\cdot D),\ \sup(\hat{x}\cdot D)+T\big),\\
\supp (I_2^{(\hat{x})})\,\subset\, \big(\inf(\hat{x}\cdot D)-T,\ \sup(\hat{x}\cdot D)\big)
\enn and $I_j^{(\hat{x})}>0$ in $\supp (I_1^{(\hat{x})})$ for $j=1,2$.
This yields
 $$\supp (I^{(\hat{x})})  = \supp (I_1^{(\hat{x})})\cap \supp (I_2^{(\hat{x})})= \big(\inf(\hat{x}\cdot D),\ \sup(\hat{x}\cdot D)\big) =\hat{x}\cdot D.$$ In other words, $I^{(\hat{x})}(y)=0$ for $\hat{x}\cdot y \notin \hat{x}\cdot D$.
\end{proof}

%Inspired by the theorem \ref{ID1} and theorem \ref{ID2}, we naturally introduce an algorithm strategy to determine the boundary of obstacles $D$ from far-field data. The details will be discussed in the next section.

\section{Two-dimensional numerical experiments with multi-frequency far-field patterns}\label{sec:4}

In this section, we shall numerically reconstruct the source support $D\subset \R^2$ from the multi-frequency far-field data $\left\lbrace u^\infty(\hat{x}_{m},  k): k\in(0,K),m=1,2, \cdots , M \right\rbrace $,
where $\hat{x}_{m}\in \s^2$ and $K>0$ is a truncated number. Unless otherwise stated, we suppose that $D$ is connected.
Since the function $k\mapsto u^\infty(\hat{x},  k)$ can be analytically extended onto the negative axis by $u^\infty(\hat{x},  -k)=\overline{u^\infty(\hat{x},  k)}$ for $k>0$, the indicators functions \eqref{Ind} can be approximated by
\be\nonumber
I^{(\hat{x})}_j(y)&\approx& \int_{-K}^K u^{\infty}(\hat{x},k) \ \overline{\phi_j^{(\hat{x})}(y,k)}\ dk \\ \nonumber
&=& \int_{0}^K u^{\infty}(\hat{x},k) \ \overline{\phi_j^{(\hat{x})}(y,k)}\ dk+
\int_{-K}^0 u^{\infty}(\hat{x},k) \ \overline{\phi_j^{(\hat{x})}(y,k)}\ dk \\ \nonumber
&=&\int_{0}^K u^{\infty}(\hat{x},k) \ \overline{\phi_j^{(\hat{x})}(y,k)}\ dk+
\int_{0}^K \overline{u^{\infty}(\hat{x},k)} \ \phi_j^{(\hat{x})}(y,k)\ dk \\ \label{TI}
&=&2\real\left\{\int_{0}^K u^{\infty}(\hat{x},k) \ \overline{\phi_j^{(\hat{x})}(y,k)}\ dk\right\}
\en
for $j=1,2$. Using multiple observations, we shall utilize the reciprocal of the sum of $(I^{(\hat{x}_m)})^{-1}$ ($m=1,2, \cdots, M$) as a new indicator, that is,
\be
 I(y)=\left[\sum_{m=1}^{M}\dfrac{1}{I^{(\hat{x}_m)}(y)}\right]^{-1}=\left[\sum_{m=1}^{M}
 \frac{I_1^{(\hat{x}_{m})}(y)+I_2^{(\hat{x}_{m})}(y)}{I_1^{(\hat{x}_{m})}(y)\; I_2^{(\hat{x}_{m})}(y)}\right]^{-1}.
  \en
The convex hull of the source region $D$ (that is, the intersections of all half spaces containing $D$) can be approximated by the intersection of the strips $\mathcal{S}^{(\hat{x}_m)}:= \{y\in \R^2: \hat{x}\cdot y\in \hat{x}_m\cdot D\}$.
Define the $\Theta$-convex hull of $D$ (see \cite{SK05}) determined by the directions $\{\hat{x}_m: m=1,2,...,M\}$  as
\ben
\Theta_D:=\bigcap_{m=1}^M \mathcal{S}^{(\hat{x}_m)}.
\enn
Since $\Theta_D\subset \mathcal{S}^{(\hat{x}_m)}$ for all $m=1,2,...,   M$, one can reconstruct the $\Theta$-convex hull of $D$ from sparse observation directions by plotting the indicator function $I(y)$.
\begin{thm}
Let $D\subset \R^2$ be connected. We have
\be%\label{Indicator2}
I(y)=\left\{\begin{array}{lll}
0\qquad\quad&&\mbox{for}\quad y\notin \Theta_D,\\
{\rm finite\; positive\; number} \qquad&&\mbox{for}\quad y
\in \Theta_D.
\end{array}\right.
\en
\end{thm}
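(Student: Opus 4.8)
The plan is to reduce everything to Theorem \ref{ID1}, applied one observation direction at a time. First I would recall the content of Theorem \ref{ID1}: for each fixed $m$, the single-direction indicator $I^{(\hat{x}_m)}(y)$ is a finite positive number precisely when $\hat{x}_m\cdot y\in \hat{x}_m\cdot D$, and equals zero otherwise. In the notation of this section this says exactly that the positivity set of $y\mapsto I^{(\hat{x}_m)}(y)$ is the strip $\mathcal{S}^{(\hat{x}_m)}=\{y\in\R^2:\hat{x}_m\cdot y\in\hat{x}_m\cdot D\}$, while its zero set is $\R^2\setminus\mathcal{S}^{(\hat{x}_m)}$. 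Since by definition $\Theta_D=\bigcap_{m=1}^M\mathcal{S}^{(\hat{x}_m)}$, the two alternatives of the theorem correspond to whether $y$ lies in all the strips $\mathcal{S}^{(\hat{x}_m)}$ or misses at least one of them.

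Second, I would treat the case $y\in\Theta_D$. Then $y\in\mathcal{S}^{(\hat{x}_m)}$ for every $m=1,\dots,M$, so Theorem \ref{ID1} gives $0<I^{(\hat{x}_m)}(y)<\infty$ for each $m$. Consequently $\sum_{m=1}^M 1/I^{(\hat{x}_m)}(y)$ is a finite sum of finitely many strictly positive finite numbers, hence itself a finite positive number; taking reciprocals yields $0<I(y)<\infty$, which is the second alternative. If one prefers to argue with the expanded form $\sum_m \bigl(I_1^{(\hat{x}_m)}(y)+I_2^{(\hat{x}_m)}(y)\bigr)/\bigl(I_1^{(\hat{x}_m)}(y)\,I_2^{(\hat{x}_m)}(y)\bigr)$, the same conclusion follows from Theorem \ref{ID10}, because $y\in\mathcal{S}^{(\hat{x}_m)}$ forces both $I_1^{(\hat{x}_m)}(y)$ and $I_2^{(\hat{x}_m)}(y)$ to be finite and positive.

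Third, I would treat the case $y\notin\Theta_D$. Then there is at least one index $m_0$ with $y\notin\mathcal{S}^{(\hat{x}_{m_0})}$, so $I^{(\hat{x}_{m_0})}(y)=0$ by Theorem \ref{ID1}. Adopting the same convention already used for the combined indicators in Theorems \ref{ID1} and \ref{ID2} — a vanishing summand $I^{(\hat{x}_{m_0})}(y)$ makes the reciprocal sum diverge, so that its reciprocal $I(y)$ is set to zero — we obtain $I(y)=0$, the first alternative. Since the two cases cover all $y\in\R^2$, this proves the claim.

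The argument is essentially bookkeeping on top of Theorem \ref{ID1}, so I do not anticipate a serious obstacle; the only point that genuinely needs care is the $1/0$ convention in the harmonic-type combination of the single-direction indicators, which should be stated explicitly (as in the earlier theorems) so that "$I(y)=0$" is unambiguous when $y$ fails to lie in some strip $\mathcal{S}^{(\hat{x}_{m_0})}$. It is also worth noting that connectedness of $D$ enters only through Theorem \ref{ID1} — it guarantees that each $\hat{x}_m\cdot D$ is a single interval and hence each $\mathcal{S}^{(\hat{x}_m)}$ a genuine strip; the disconnected case would instead invoke Theorem \ref{ID2}.
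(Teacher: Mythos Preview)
Your proposal is correct and follows essentially the same approach as the paper's own proof: both split into the cases $y\in\Theta_D$ and $y\notin\Theta_D$ and reduce to the single-direction indicator result (Theorem \ref{ID1}) to conclude finiteness/positivity or vanishing, respectively. Your explicit remark on the $1/0$ convention and the role of connectedness is a welcome clarification that the paper leaves implicit.
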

\begin{proof}
If $y\in\Theta_D$, then $y\in\mathcal{S}^{(\hat{x}_m)}$ for all $m=1,2,...,M$, yielding  that $\hat{x}_m \cdot y \in \hat{x}_m \cdot D$. Hence, one deduces from Theorem \ref{ID10} that $0<I^{(\hat{x}_m)}(y)<\infty$ for all $m=1,2,...,M$, implying $0<I(y)<\infty$.
On the other hand, if $y\notin\Theta_D$,  it holds that $y\notin \mathcal{S}^{(\hat{x}_{\ell})}$ for some $\hat{x}_{\ell}$. Again using Theorem \ref{ID10} we obtain $\frac{1}{I^{(\hat{x}_{\ell})}(y)}=\infty$. Hence, there must hold  $I(y)=0$ for $y\in\Theta_D$.
\end{proof}

\noindent To balance the indicator values for different observation directions, we normalize the indicator functions $I$ and $I^{(\hat{x}_m)}$ by
\ben
\tilde{I}^{(\hat{x}_m)}(y):=
\frac{I^{(\hat{x}_m)}(y)-\min_{y\in \Omega} I^{(\hat{x}_m)}(y)}{\max_{y\in \Omega} I^{(\hat{x}_m)}(y)-\min_{y\in \Omega} I^{(\hat{x}_m)}(y)}
\enn
\ben
I(y)=\left[\sum_{m=1}^{M}\dfrac{1}{\tilde{I}^{(\hat{x}_m)}(y)}\right]^{-1},\quad\tilde{I}(y):=
\frac{I(y)-\min_{y\in \Omega} I(y)}{\max_{y\in \Omega} I(y)-\min_{y\in \Omega} I(y)}.
\enn
where $\Omega \supset D$ is a search domain for imaging $D$.
The normalization ensures that the values of the indicators $\tilde{I}^{(\hat{x}_m)}$ and $\tilde{I}$ are positive with the maximum value one.
By Theorem \ref{ID1}, one can expect that $\tilde{I}$ should be relatively larger in $D$ than those values in the exterior of $D$.
Our numerical examples are tested by the normalized indicators.

With all observations $\hat{x}\in\mathbb{S}^2$, one can easily obtain the following uniqueness result.

\begin{thm} Suppose that the positivity condition \eqref{F} holds on the connected domain
$D\subset \R^3$ and the radiating period $(t_{\min}, t_{\max})$ is given. Then the multi-frequency far-field patterns $\left\lbrace u^\infty(\hat{x},  k): \forall\;\hat{x}\in \s^2,k\in(0,K)\right\rbrace $ with some $K>0$ uniquely determine the convex hull of $D$.
\end{thm}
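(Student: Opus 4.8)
The plan is to reduce the claim to reconstructing, direction by direction, the support function of the convex hull of $D$, and then to invoke the classical characterization of a convex body by its support function. First I would observe that for each fixed $\hat x\in\s^2$ the map $k\mapsto u^{\infty}(\hat x,k)$ extends to an entire function of $k\in\C$: by \eqref{u-infty} and \eqref{fxt},
$u^{\infty}(\hat x,k)=\int_{t_{\min}}^{t_{\max}}\int_{D}F(y,t)e^{-ik(\hat x\cdot y+t)}\,dy\,dt=(\mathcal Fg)(k)$,
where $g$ is the compactly supported, bounded function of \eqref{g}. By Paley--Wiener this is entire (of exponential type), so its restriction to the interval $(0,K)$ determines $u^{\infty}(\hat x,k)$ for all $k>0$ by analytic continuation, and then for all $k<0$ via the symmetry $u^{\infty}(\hat x,-k)=\overline{u^{\infty}(\hat x,k)}$ noted in the introduction. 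Hence the given data determine $\mathcal F^{-1}u^{\infty}(\hat x,\cdot)=g$ on all of $\R$, for every $\hat x\in\s^2$.

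Next I would apply Lemma \ref{21}: under the positivity assumption \eqref{F}, the supporting interval of $g$ is exactly $H(\hat x)=\big(\inf(\hat x\cdot D)+t_{\min},\ \sup(\hat x\cdot D)+t_{\max}\big)$, and $g$ is strictly positive there. Since $g$ is now known, so is its supporting interval, and reading off the two endpoints together with the a priori known values $t_{\min}$ and $t_{\max}$ yields both $\inf(\hat x\cdot D)$ and $\sup(\hat x\cdot D)$ for every $\hat x\in\s^2$. In particular the support function $h(\hat x):=\sup_{y\in D}\hat x\cdot y=\sup(\hat x\cdot D)$ of $D$ is recovered for all $\hat x\in\s^2$. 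The role of the positivity hypothesis is precisely here: it excludes interior cancellation in $g$, so that the supporting interval is the full interval $H(\hat x)$ rather than a proper subinterval, and the endpoints genuinely encode $\inf(\hat x\cdot D)$ and $\sup(\hat x\cdot D)$.

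Finally I would conclude by the standard fact that a compact convex set is determined by its support function: the convex hull of $D$ equals $\bigcap_{\hat x\in\s^2}\{y\in\R^3:\hat x\cdot y\le h(\hat x)\}$, so knowing $h$ on all of $\s^2$ determines the convex hull of $D$ uniquely, which is the assertion. The only step needing a little care is the analytic-continuation argument — one must justify that $u^{\infty}(\hat x,\cdot)$ is genuinely (real-)analytic in $k$, but this is immediate from the Paley--Wiener representation $u^{\infty}=\mathcal Fg$ with $g$ of compact support; everything afterwards is bookkeeping of endpoints plus the classical support-function characterization of convex bodies.
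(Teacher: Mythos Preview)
Your proposal is correct and follows essentially the same route the paper has in mind. The paper does not give a formal proof of this theorem; it only remarks that ``the knowledge of $u^\infty(\hat{x}, k)$ in the frequency interval $(0, K)$ is equivalent to the far-field data for all $k>0$, due to the analyticity of $u^\infty(\hat{x}, k)$ in $k>0$,'' and leaves the rest implicit in the preceding machinery. Your argument makes this explicit: the analytic-continuation step (which you justify via Paley--Wiener, a sharper statement than the paper's bare appeal to analyticity), then Lemma~\ref{21} to read off $\inf(\hat x\cdot D)$ and $\sup(\hat x\cdot D)$ from the endpoints of the supporting interval of $g=\mathcal F^{-1}u^\infty(\hat x,\cdot)$, then the support-function characterization of the convex hull. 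The only cosmetic difference is that the paper's surrounding development packages the strip recovery through the indicator $I^{(\hat x)}$ of Theorem~\ref{ID1}, whereas you go directly through Lemma~\ref{21}; since Theorem~\ref{ID1} is itself proved by reducing to the function $g$ of Lemma~\ref{21}, the two are equivalent.
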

Note that the knowledge of $u^\infty(\hat{x},  k)$ in the frequency interval $(0, K)$ is equivalent to the far-field data for all $k>0$, due to the analyticity of $u^\infty(\hat{x},  k)$ in $k>0$.

\begin{rem}\label{rem4.1}
We think that the positivity condition over $D$ can be further relaxed, for example, $F$ remains positive only in a neighborhood of $\partial \left(D\times (t_{\min}, t_{\max})\right)$. Even without the positivity assumption, one can get the same uniqueness if the source function $F$ and the boundary of the domain $D$ are both analytic.
\end{rem}

\begin{rem}
In the direct sampling scheme, it is essential to explain properties of the indicator function $I^{(\hat{x})}$ at a fixed observation point. In this paper the indicating behavior of the truncated indicator \eqref{TI} is interpreted as the approximation of the original indicators \eqref{Ind} whose properties are rigorously justified in Theorem \ref{ID10}. This is different from an alternative explanation made in \cite{AHLS} where the truncated indicator was shown to decay as $1/|y|$ when the sampling variable $y$ moves away from $D$.
\end{rem}

\subsection{Reconstructions from a single observation direction}
In this subsection, we illustrate numerical reconstructions of the strip $\mathcal{S}^{(\hat{x})}$ from multi-frequency far-field data $\left\lbrace u^\infty(\hat{x},  k_n): k_n=nK/N\in(0,K],\ n=1,2,\cdots , N\right\rbrace $ in $\R^2$ using the indicator $\tilde{I}^{(\hat{x})}$.
It is supposed that $\hat{x}=(\cos\theta,\ \sin\theta)$ with some fixed $\theta \in \left( 0,2\pi\right] $.
We select the source functions that are real-valued and are constrained by (\ref{F}). If not otherwise stated, we choose $K=20$ and $N=200$ as the default setting. In Figs \ref{fig:1dirI}  (a), (b) and (c), we set $F(x,t)=1$ supported on a kite-shaped domain, fix $\theta=\pi/2$ and plot the normalized indicators $\tilde{I}_j^{(\hat{x})}$ for $j=1,2$ and
$\tilde{I}^{(\hat{x})}$.
 To make inversion results more intuitive, we set a threshold $\varepsilon>0$ in Figs. (d), (e) and (f), respectively. It is obvious that
 $\tilde{I}_j^{(\hat{x})}$ ($j=1,2$) can be used to capture the strip $\hat{x}\cdot D$ with a shift $T=2$ on the upper and bottom boundaries. The strip $\mathcal{S}^{(\hat{x})}$ has been accurately recovered by the normalized indicator $\tilde{I}^{(\hat{x})}$.

\begin{figure}[H]
	\centering
	\subfigure[Indicator $\tilde{I}_1^{(\hat{x})}$]{
		\includegraphics[scale=0.3]{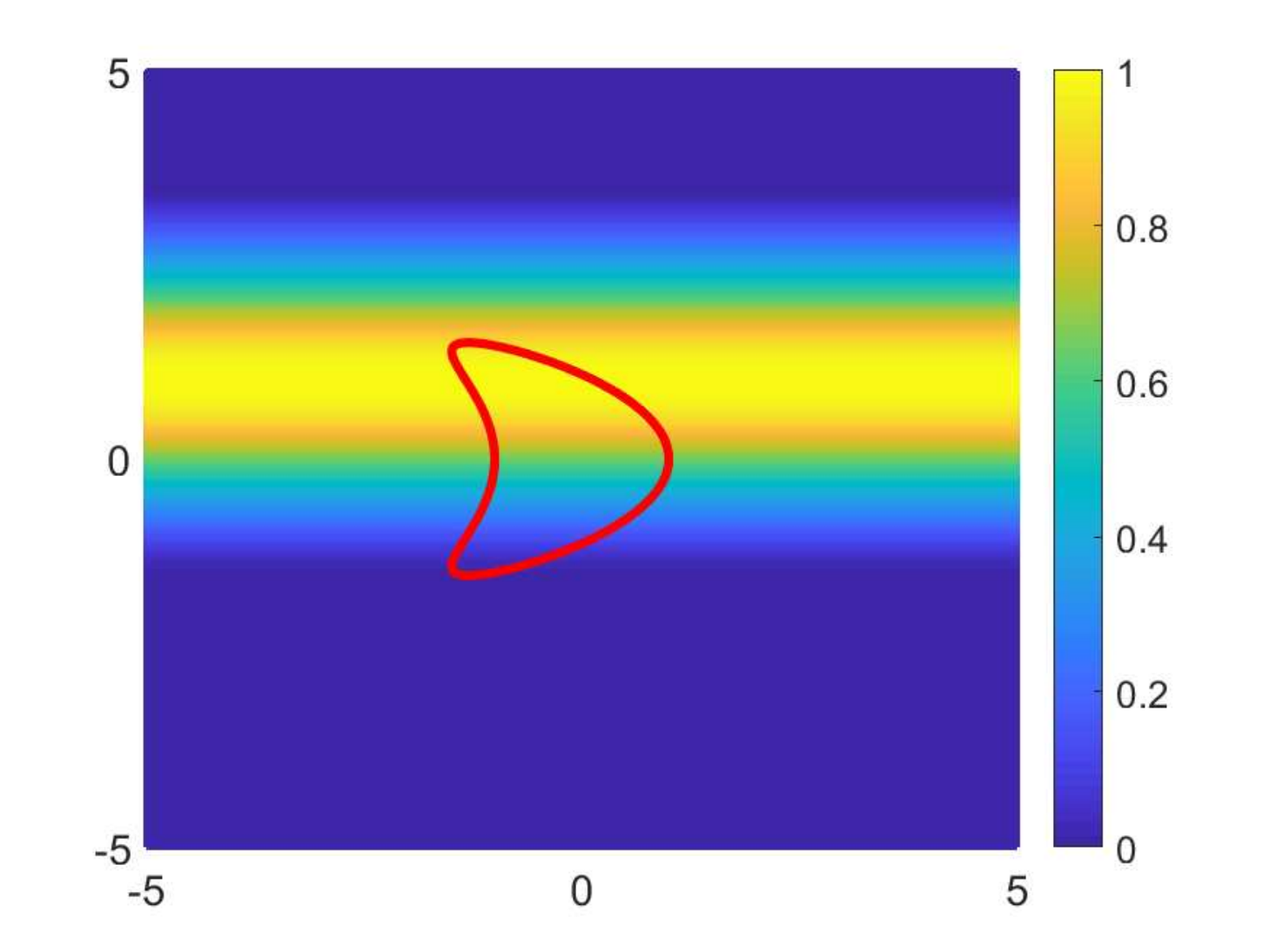}
	}	
	\subfigure[Indicator $\tilde{I}_2^{(\hat{x})}$]{
		\includegraphics[scale=0.3]{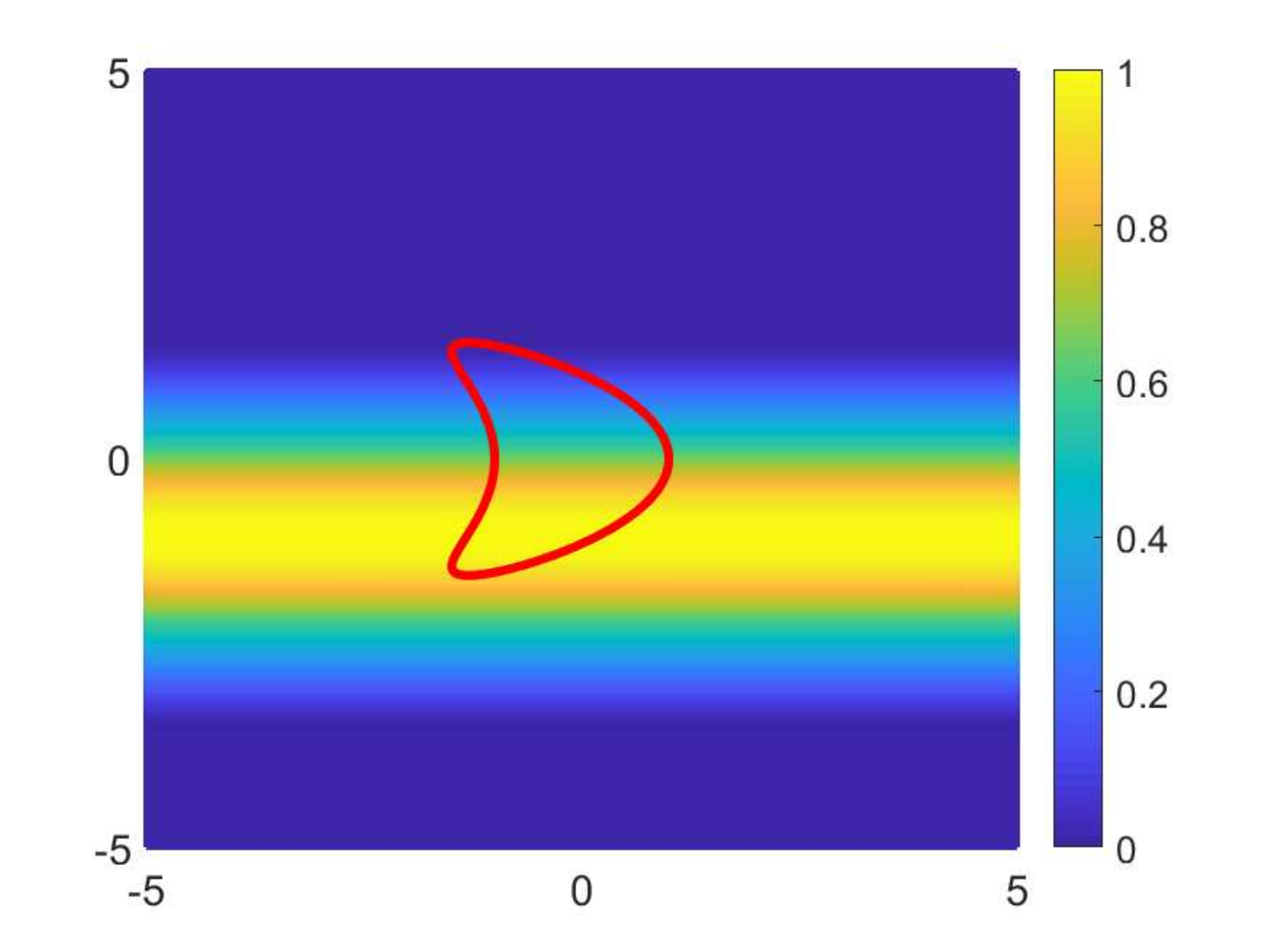}
	}
	\subfigure[Indicator $\tilde{I}^{(\hat{x})}$ ]{
		\includegraphics[scale=0.3]{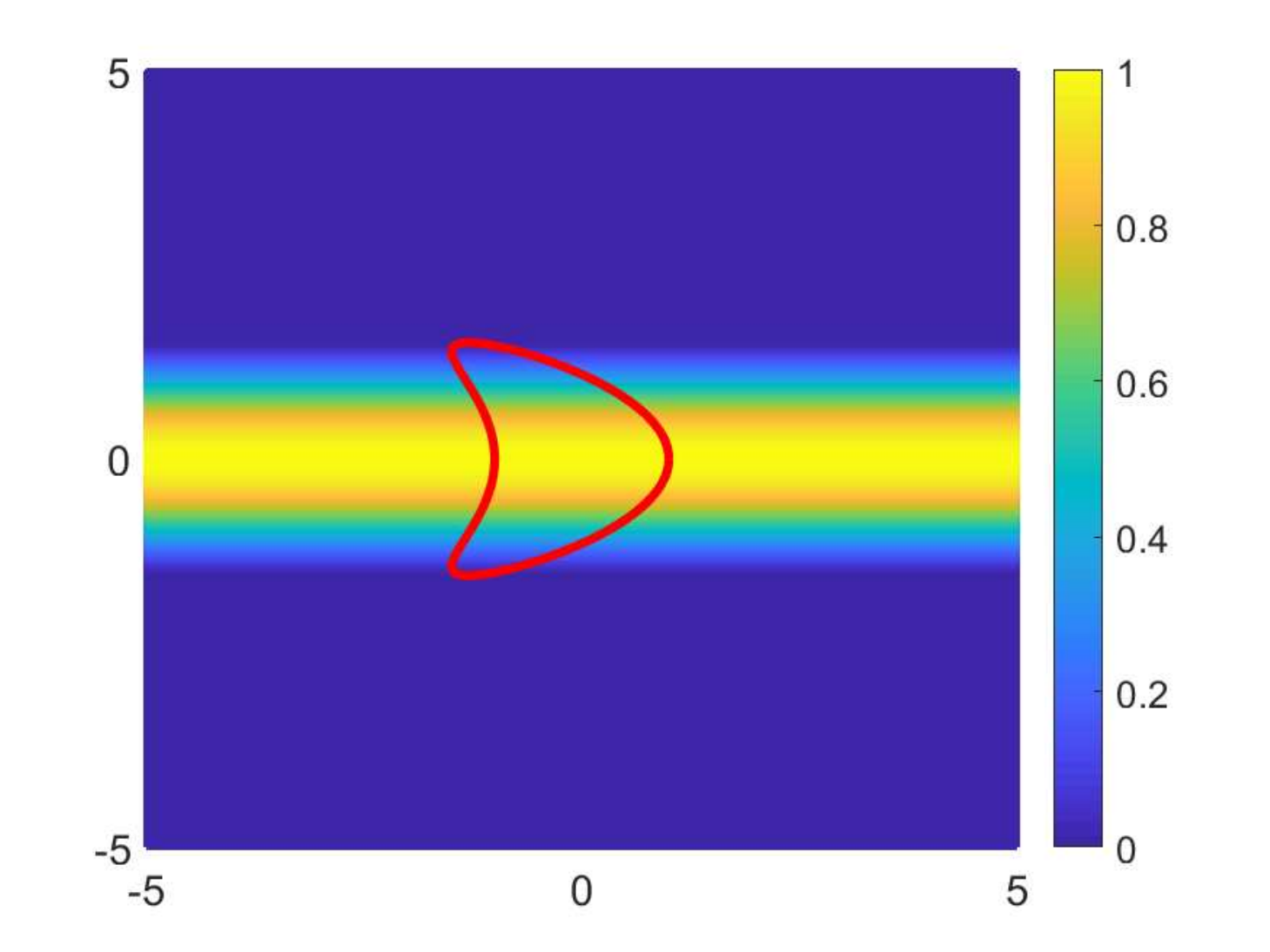}	
	}

	\subfigure[ $\tilde{I}_1^{(\hat{x})}$, $\varepsilon=0.007$]{
		\includegraphics[scale=0.3]{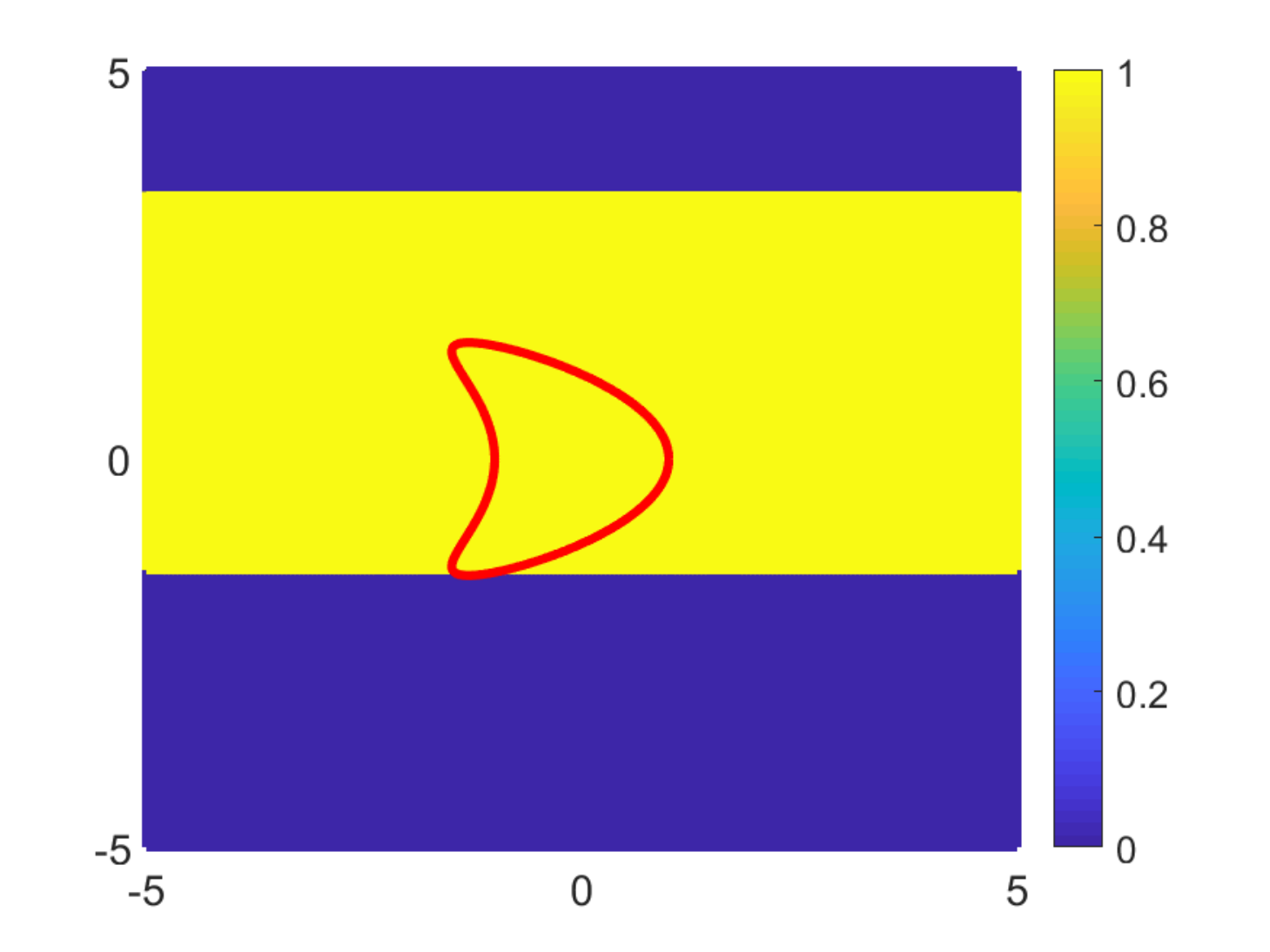}		
	}
	\subfigure[ $\tilde{I}_2^{(\hat{x})}$, $\varepsilon=0.007$]{
		\includegraphics[scale=0.3]{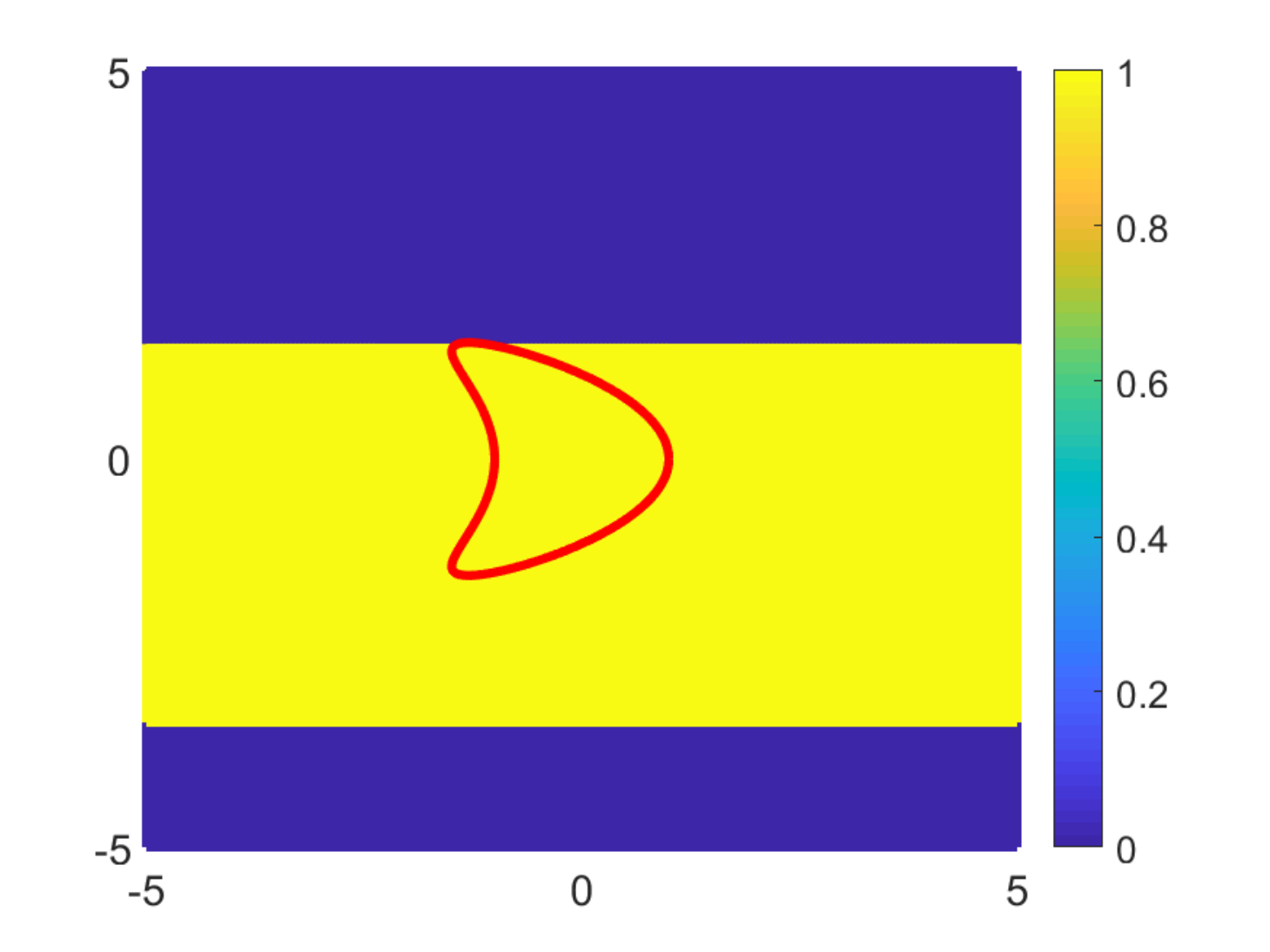}
	}
	\subfigure[ $\tilde{I}^{(\hat{x})}$, $\varepsilon=0.007$]{
		\includegraphics[scale=0.3]{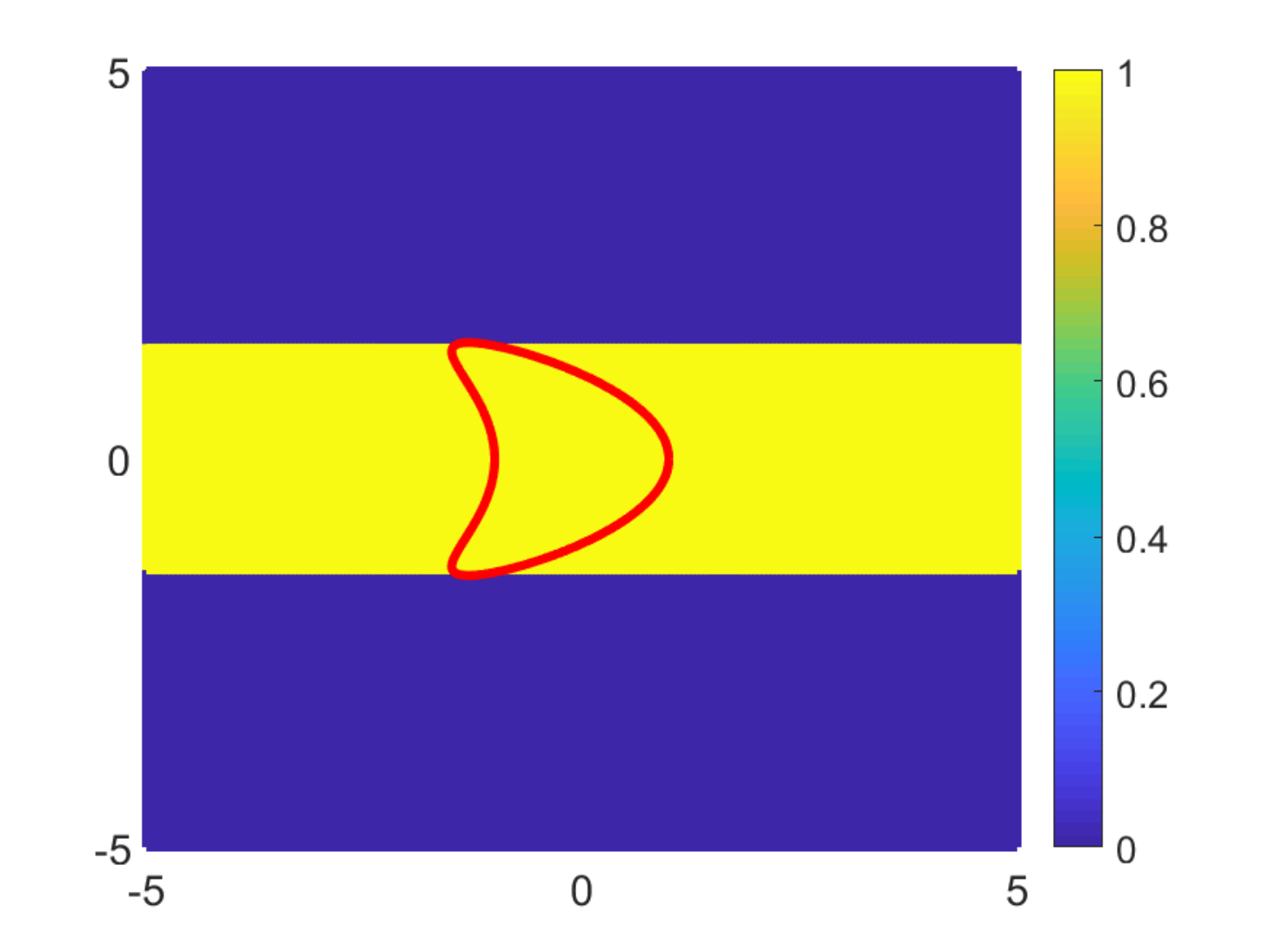}
	}

	\caption{Reconstructions of a kite-shaped support with different indicators defined as (\ref{In1}), (\ref{In2}) and (\ref{Indicator}). The Fourier transform window is $(t_{\min}, t_{\max})=(0, 2)$.	} \label{fig:1dirI}
\end{figure}

Three different source functions and three observation directions are selected in Fig. \ref{fig:1dirF}. We choose $F(x,t)=1$ and $\theta=\pi/4$ in Fig.\ref{fig:1dirF} (a); $F(x,t)=t$ and $\theta=3\pi /4$ in Fig.\ref{fig:1dirF} (b) and $F(x,t)=(x_1^2+x_2^2+10)t$ and $\theta=2\pi$ in Fig. \ref{fig:1dirF}. These figures are reconstructed by the normalized indicator $\tilde{I}^{(\hat{x})}$ and
 are not processed by a threshhold. One can conclude that strip $\mathcal{S}^{(\hat{x})}$ can be well recovered by using the multi-frequency data at a single observation direction.

\begin{figure}[H]
	\centering
	
	\subfigure[$F=1, \theta=\pi/4$]{
		\includegraphics[scale=0.3]{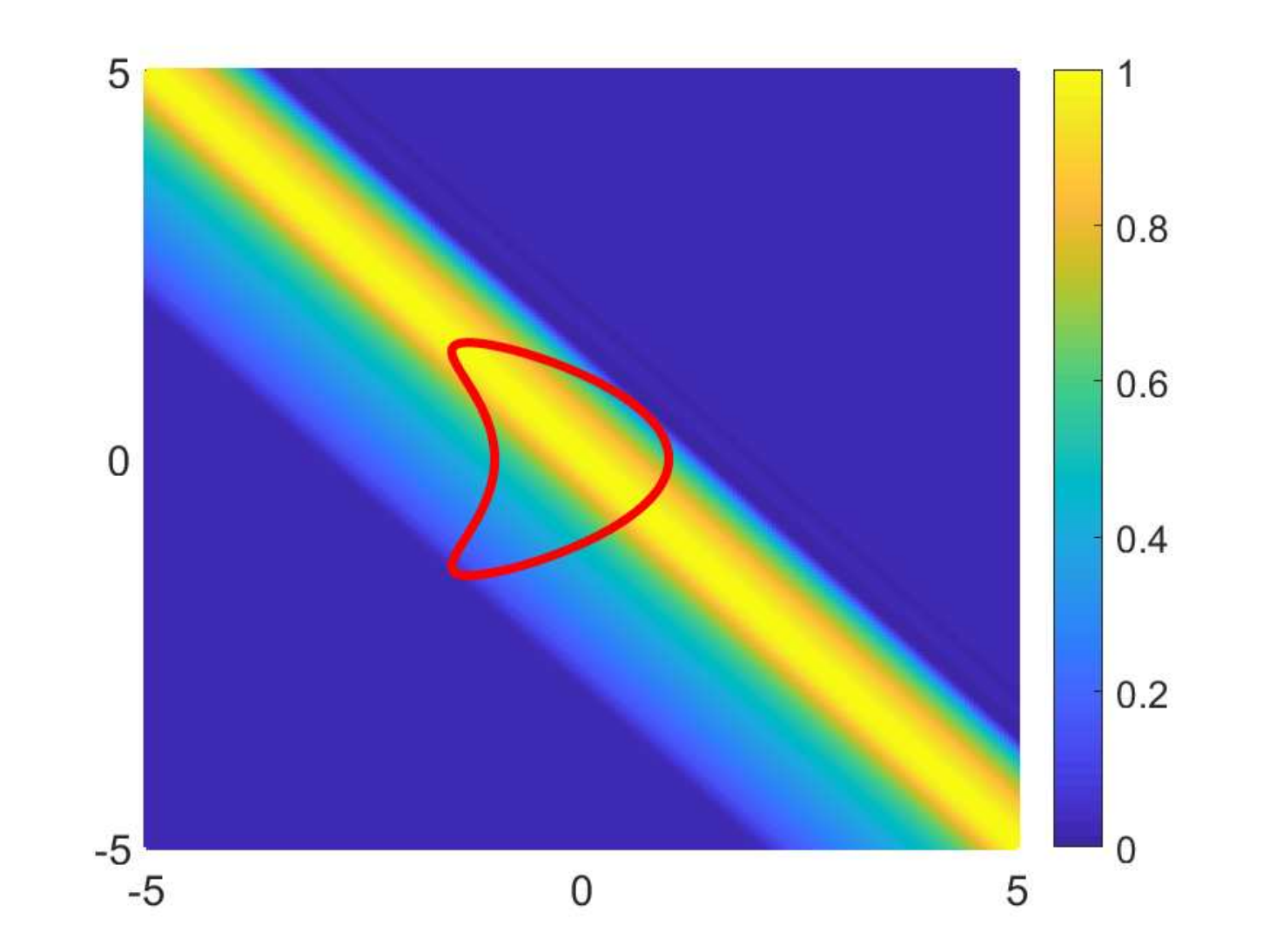}
	}
	\subfigure[$F=t, \theta=3\pi/4$ ]{
		\includegraphics[scale=0.3]{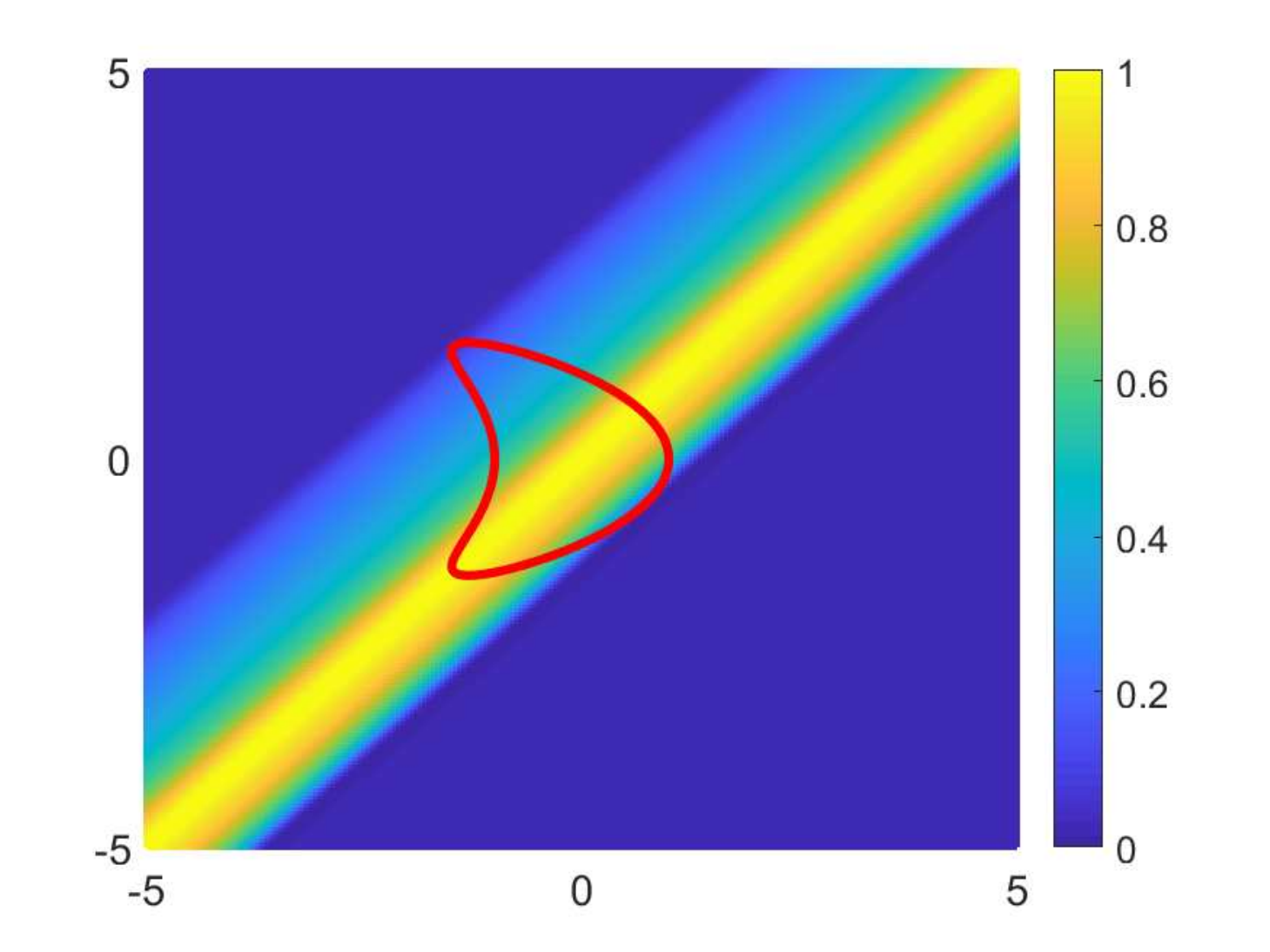}
		
	}
	\subfigure[$F=(x_1^2+x_2^2)t, \theta=2\pi$]{
		\includegraphics[scale=0.3]{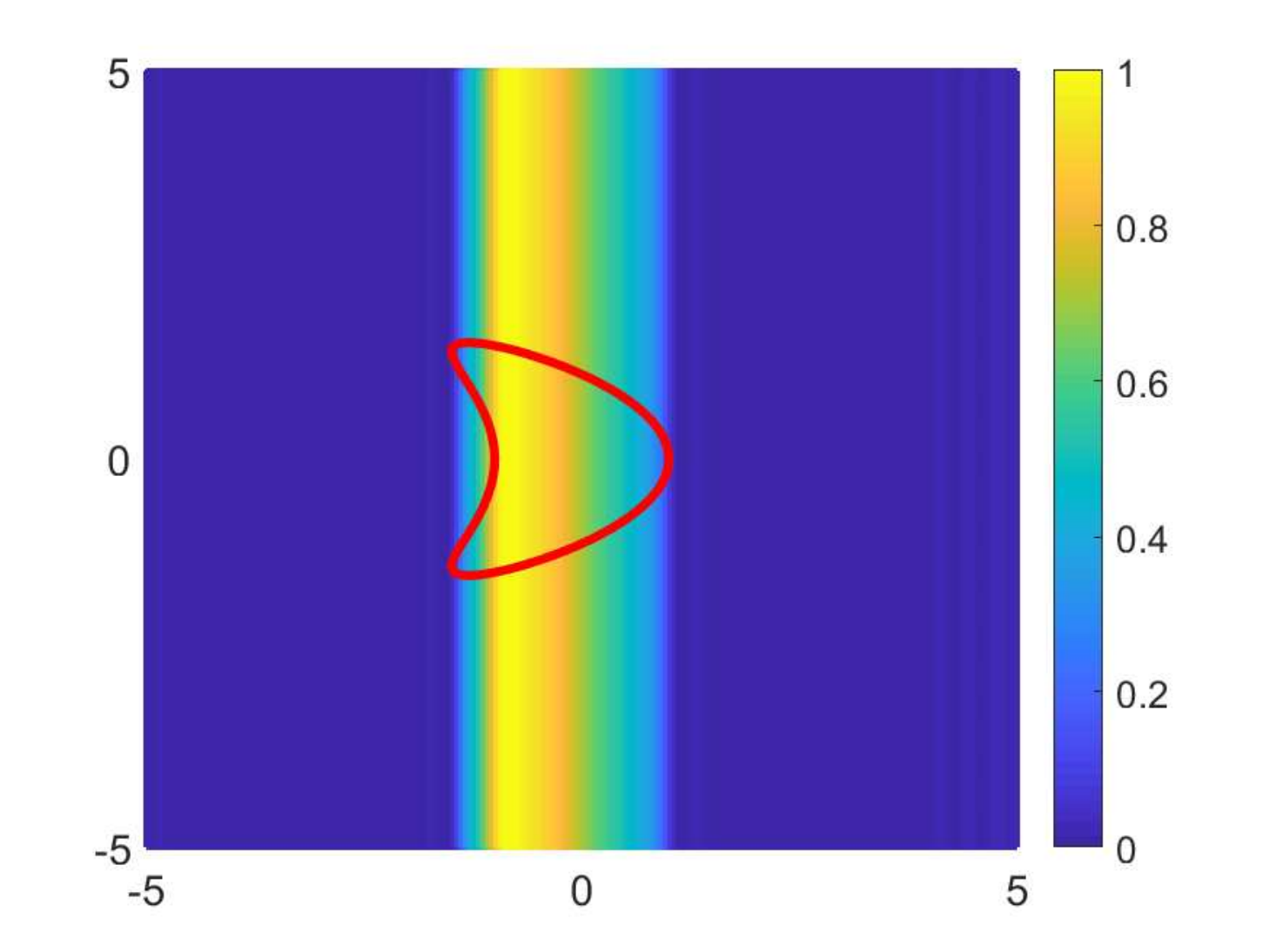}
		
	}

	\caption{Reconstructions for a kite-shaped support by using a single observation direction $\hat{x}=(\cos\theta,\sin\theta)$ and multi-frequency far-field data. The Fourier transform window is $(0, 0.1)$.
	} \label{fig:1dirF}
\end{figure}

Next, we perform numerical tests by using different Fourier transform windows $(t_{\min},\ t_{\max})$. We fix $\hat{x}=(\cos\theta,\sin\theta)$, $\theta=3\pi/4$ and fix the source function $F(x,t)=(x_1^2+x_2^2+10)t$ supported on the kite-shaped domain. %In Fig.\ref{fig:1dirT}, we consider different $t_{\max}$ by fixing $t_{\min}=0$, $K=20$.
We set $t_{\min}=0$ in Figure \ref{fig:1dirT}. Take $t_{\max}=1$, $N=200$ in Fig. \ref{fig:1dirT} (a); $t_{\max}=3$, $N=300$ in Fig. \ref{fig:1dirT}(b); $t_{\max}=8$,  $N=800$ in Fig.\ref{fig:1dirT} (c). It is obvious that the source support lies in the smallest strip perpendicular to the observation direction. Fixing $T=t_{\max}-t_{\min}=0.1$, we change $t_{\min}=1,2,3$ in Fig. \ref{fig:1dirt}.  The previous two groups of experiments have shown that the choice of the Fourier transform window $(t_{\min},t_{\max})$ does not effect the reconstructions at a single observation direction.

\begin{figure}[H]
	\centering
	\subfigure[$T=1$]{
		\includegraphics[scale=0.3]{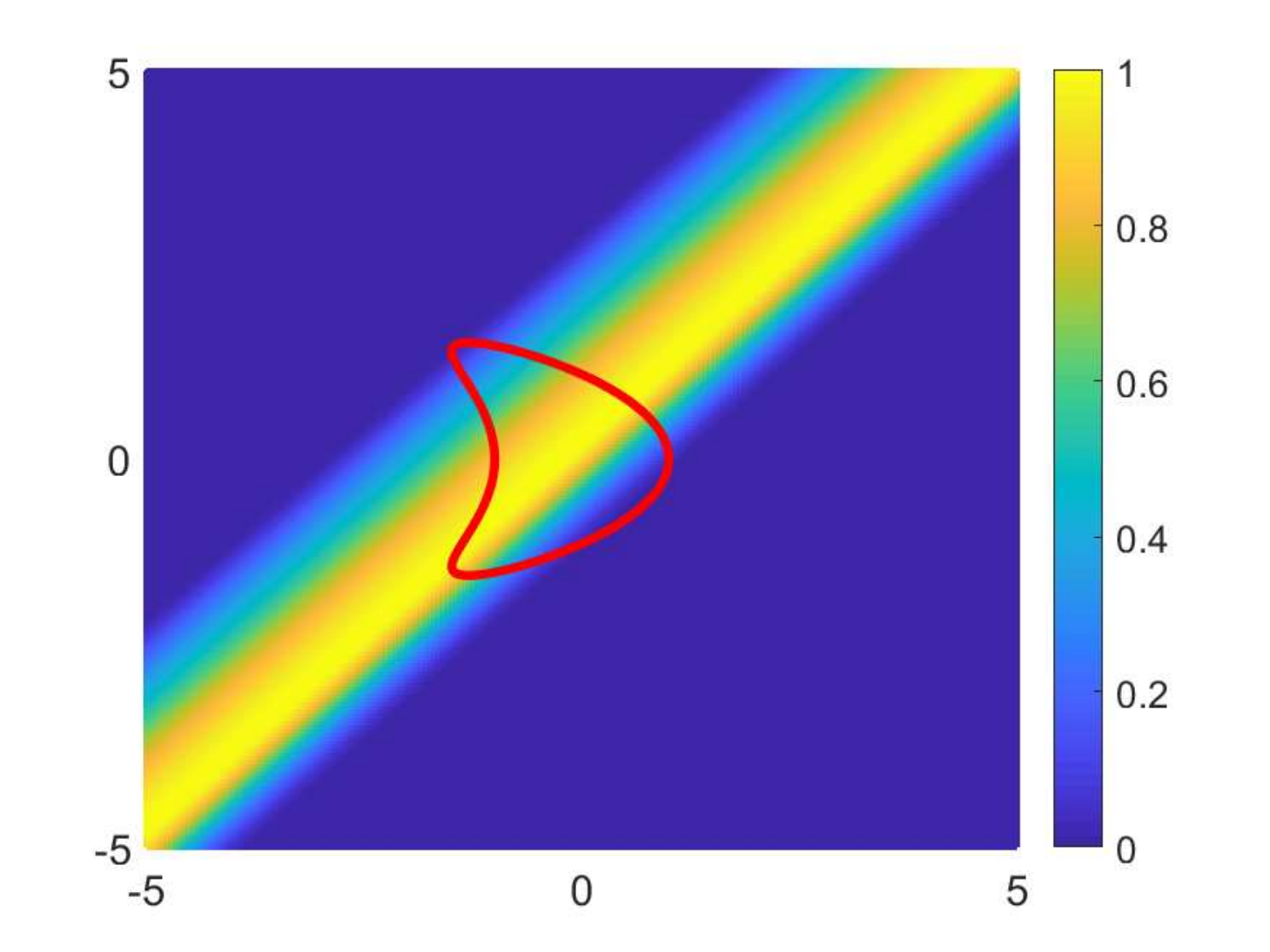}
	}
	\subfigure[$T=3$ ]{
		\includegraphics[scale=0.3]{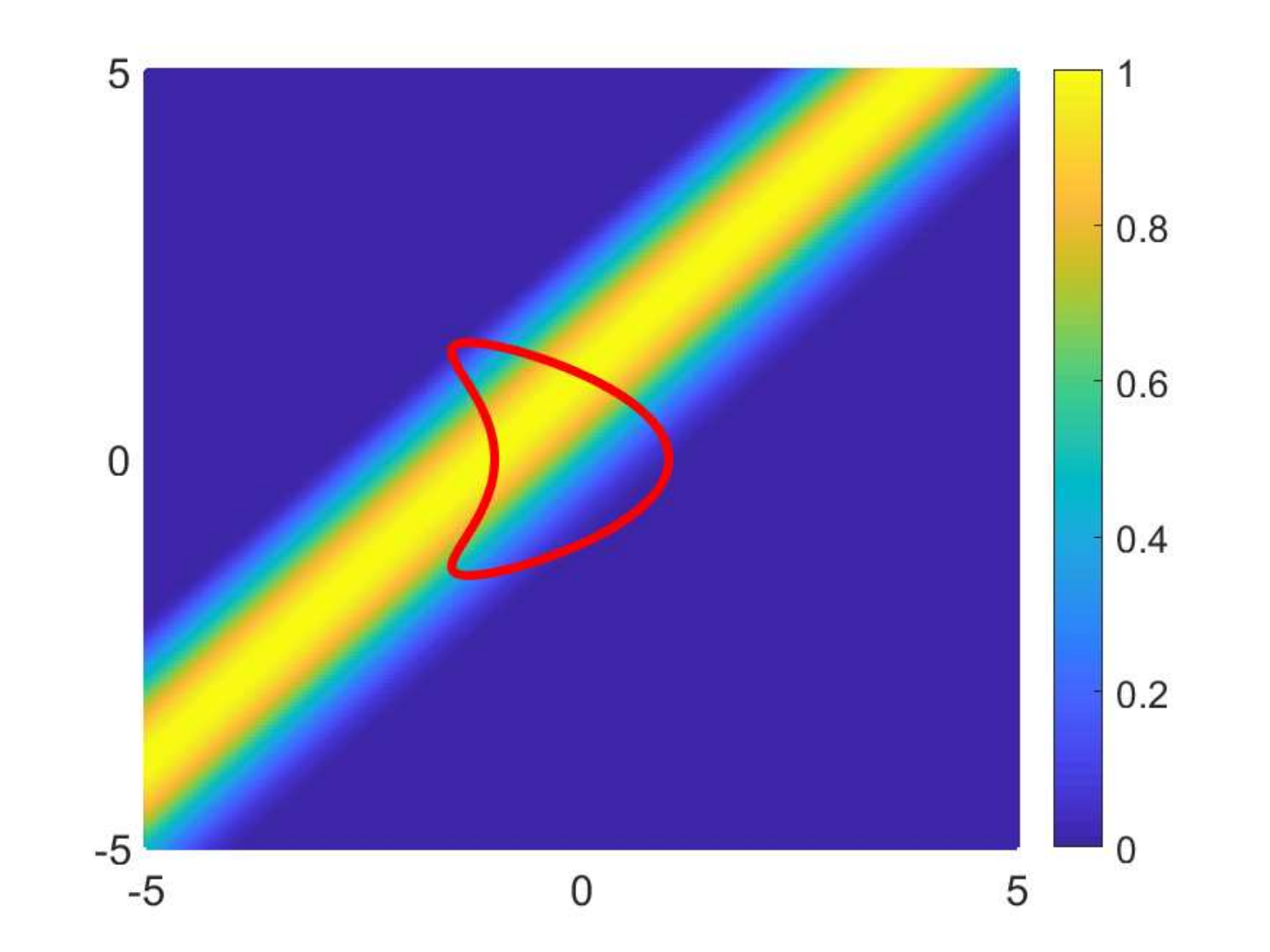}
		
	}
	\subfigure[$T=8$]{
		\includegraphics[scale=0.3]{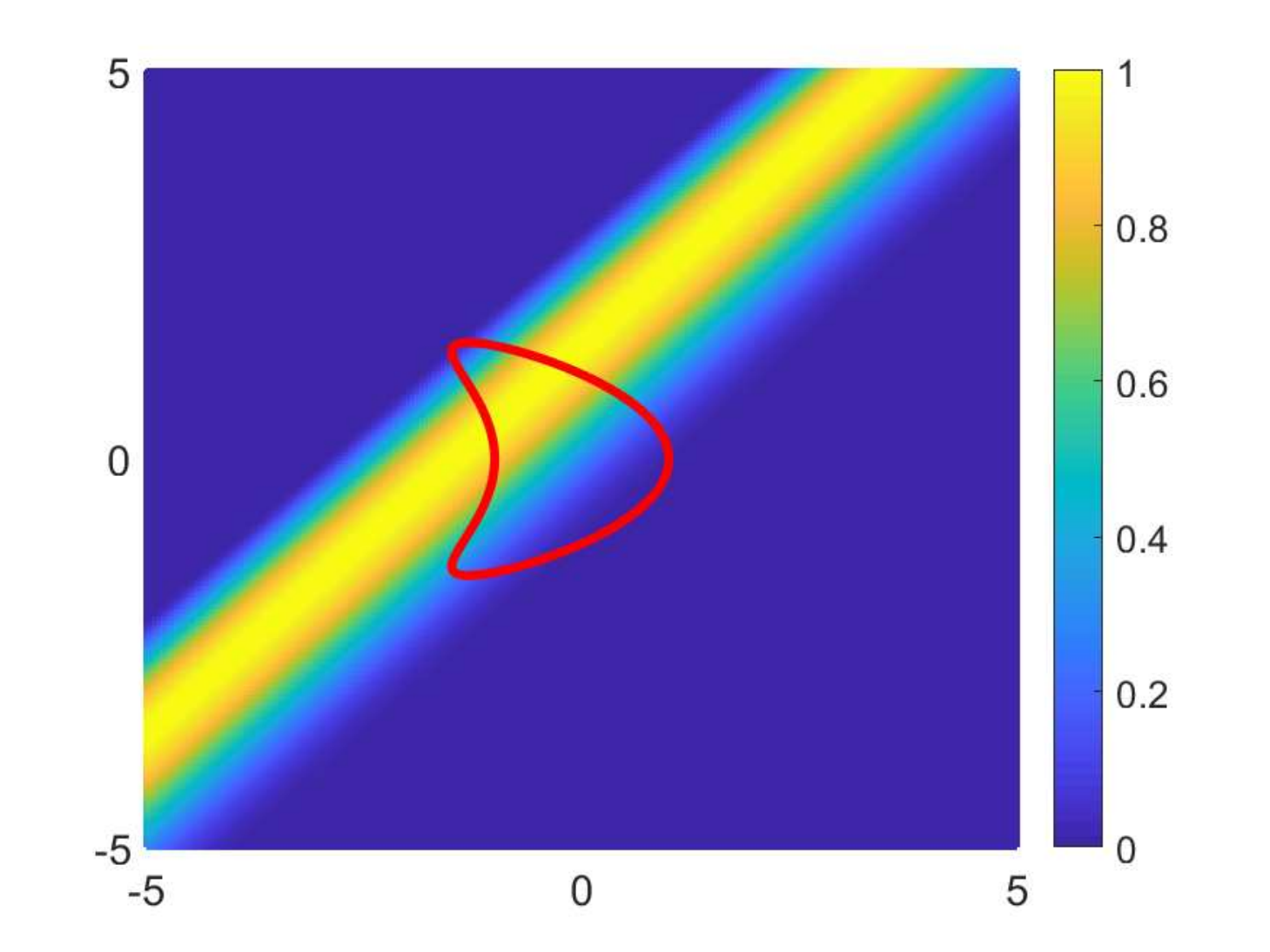}
		
	}

	\caption{Reconstructions of a kite-shaped support with $F(x,t)=(x_1^2+x_2^2+10)t$ and $\theta = 3\pi /4$ with different Fourier transform windows $(t_{\min}, t_{\max})=(0, T)$.
	} \label{fig:1dirT}
\end{figure}

Finally, we consider a support with two disconnected components, a kite and an ellipse, which satisfy the assumption(A) for some observation direction $\hat{x}$. In Fig. \ref{fig:1dir2D} (a), the observation angle is taken as $\theta=\pi/4$, and the strip goes along the direction of $3\pi/4$ , separating the two supports precisely. The kite-ellipse-shaped supports lie within the strips obtained from the observation angle $3\pi/4$, which are visualized in Fig.\ref{fig:1dir2D} (b). Since the distance assumption(A) is not satisfied at the angle $\theta=3\pi/4$, the two supports are not separated but lie within the strip perpendicular to the observation direction. We set the observation angle to $\theta=2\pi$ in Fig.\ref{fig:1dir2D} (c). The recovered strips along the direction $(1, 0)$ precisely separates the two components.

\begin{figure}[H]
	\centering
	\subfigure[$t_{\min}=1$, $T=0.1$]{
		\includegraphics[scale=0.3]{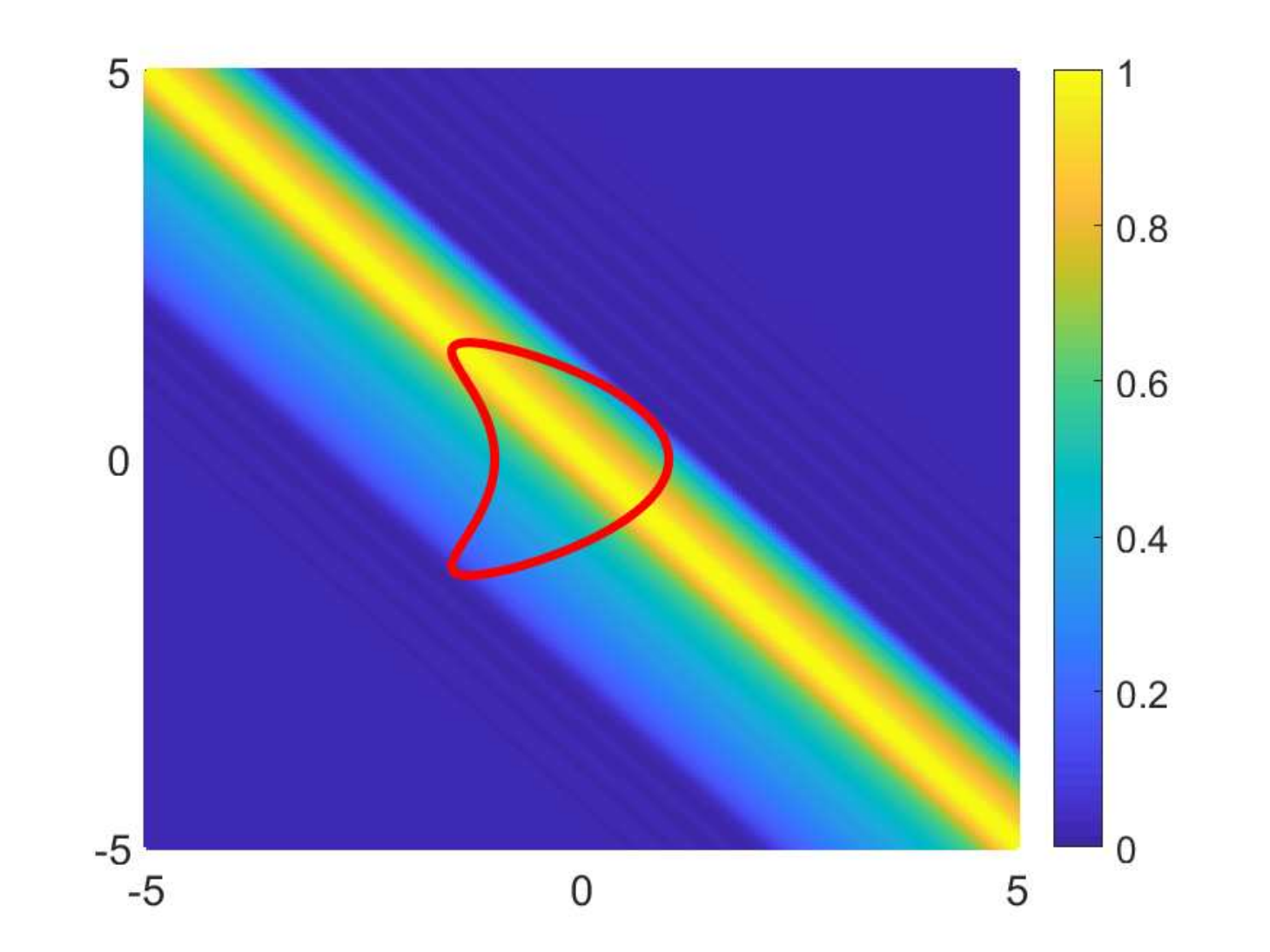}
	}
	\subfigure[$t_{\min}=2$, $T=0.1$ ]{
		\includegraphics[scale=0.3]{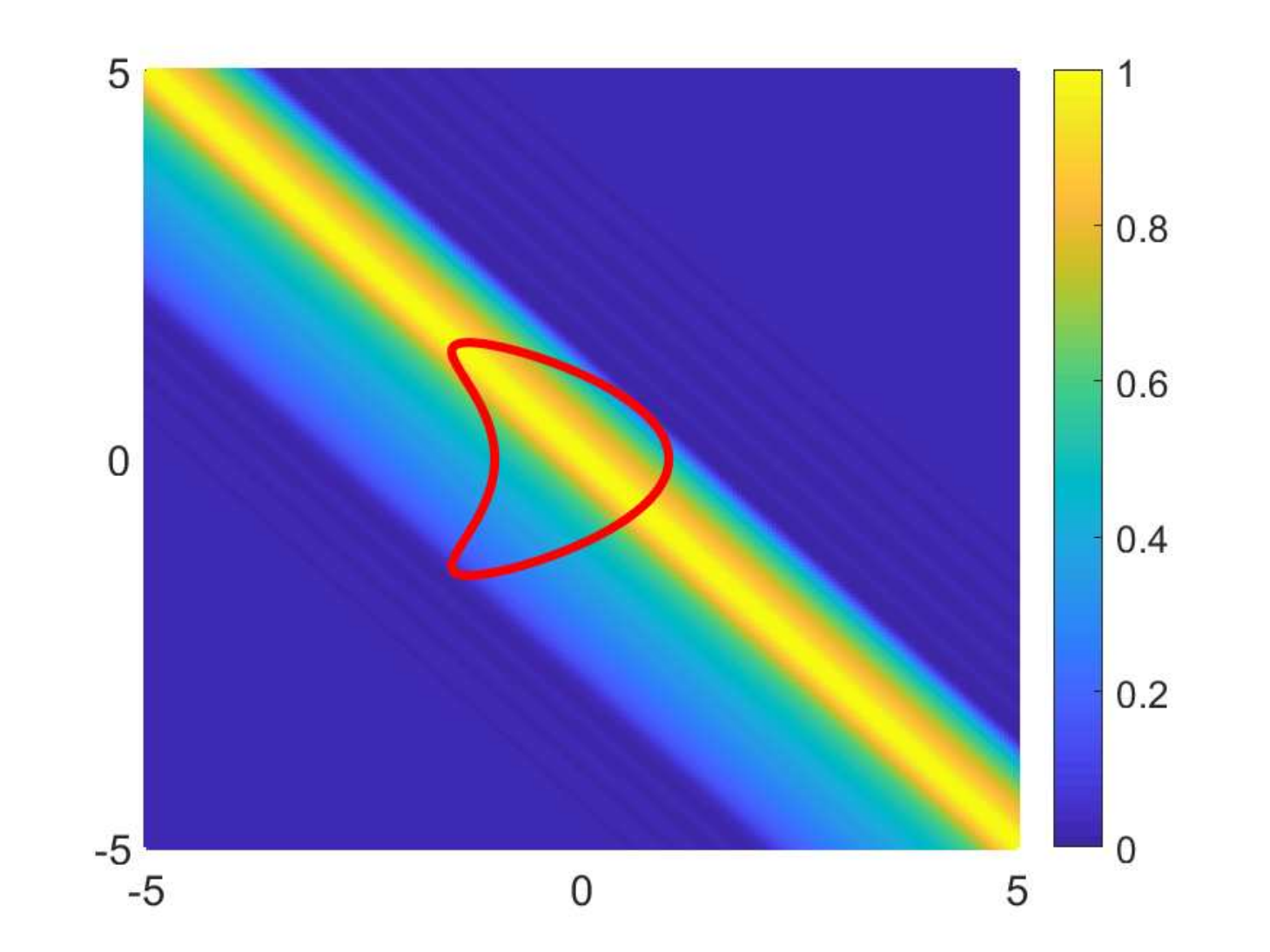}
		
	}
	\subfigure[$t_{\min}=3$, $T=0.1$]{
		\includegraphics[scale=0.3]{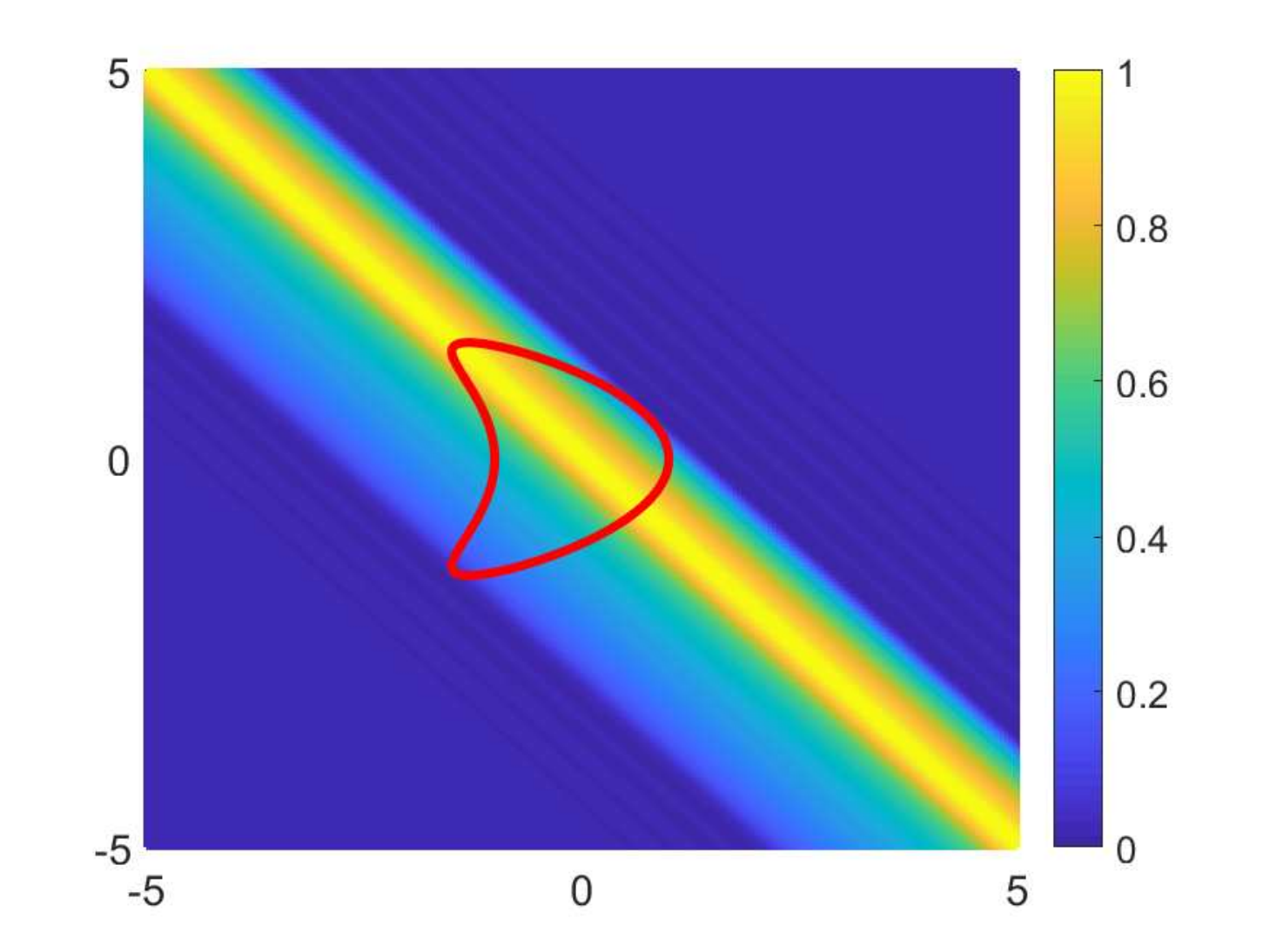}
		
	}

	\caption{Reconstructions of a kite-shaped support with $F(x,t)=(x_1^2+x_2^2+10)t$ and $\theta = \pi /4$ with different Fourier transform windows $(t_{\min}, t_{\min}+T)$.
	} \label{fig:1dirt}
\end{figure}

\begin{figure}[H]
	\centering
	\subfigure[$\theta=\pi/4$]{
		\includegraphics[scale=0.3]{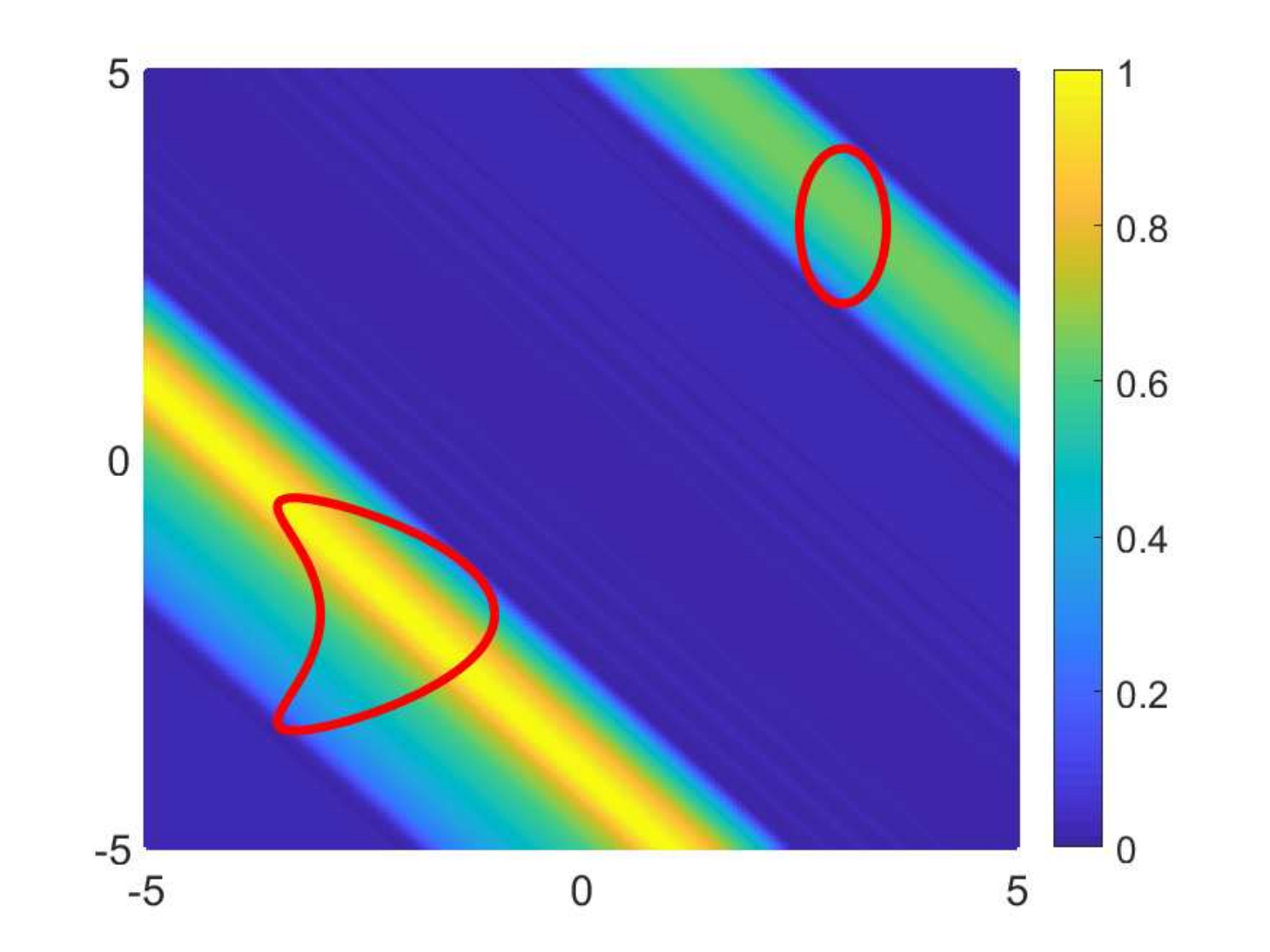}
	}
	\subfigure[$\theta=3\pi/4$]{
		\includegraphics[scale=0.3]{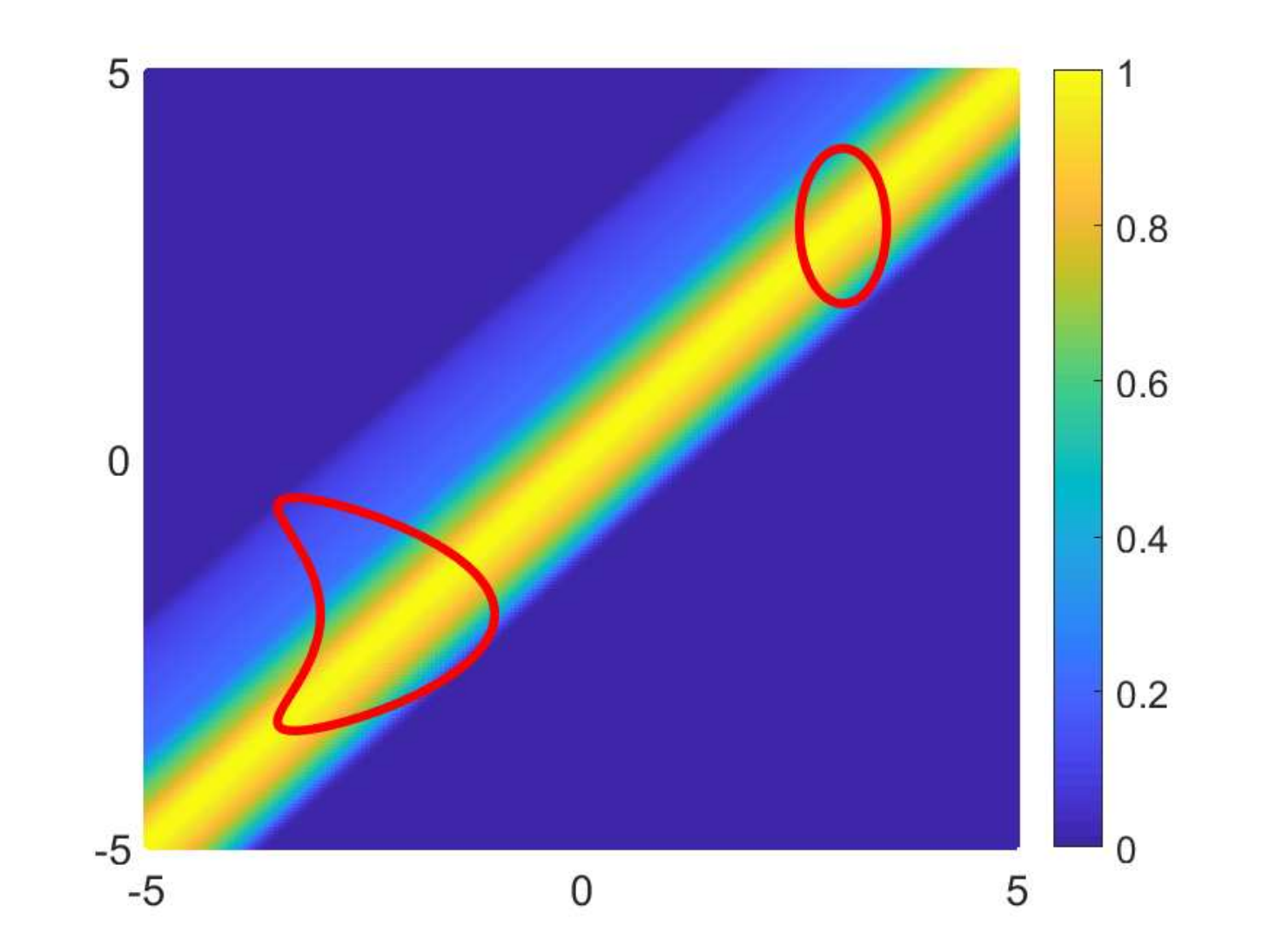}
		
	}
	\subfigure[$\theta=2\pi$]{
		\includegraphics[scale=0.3]{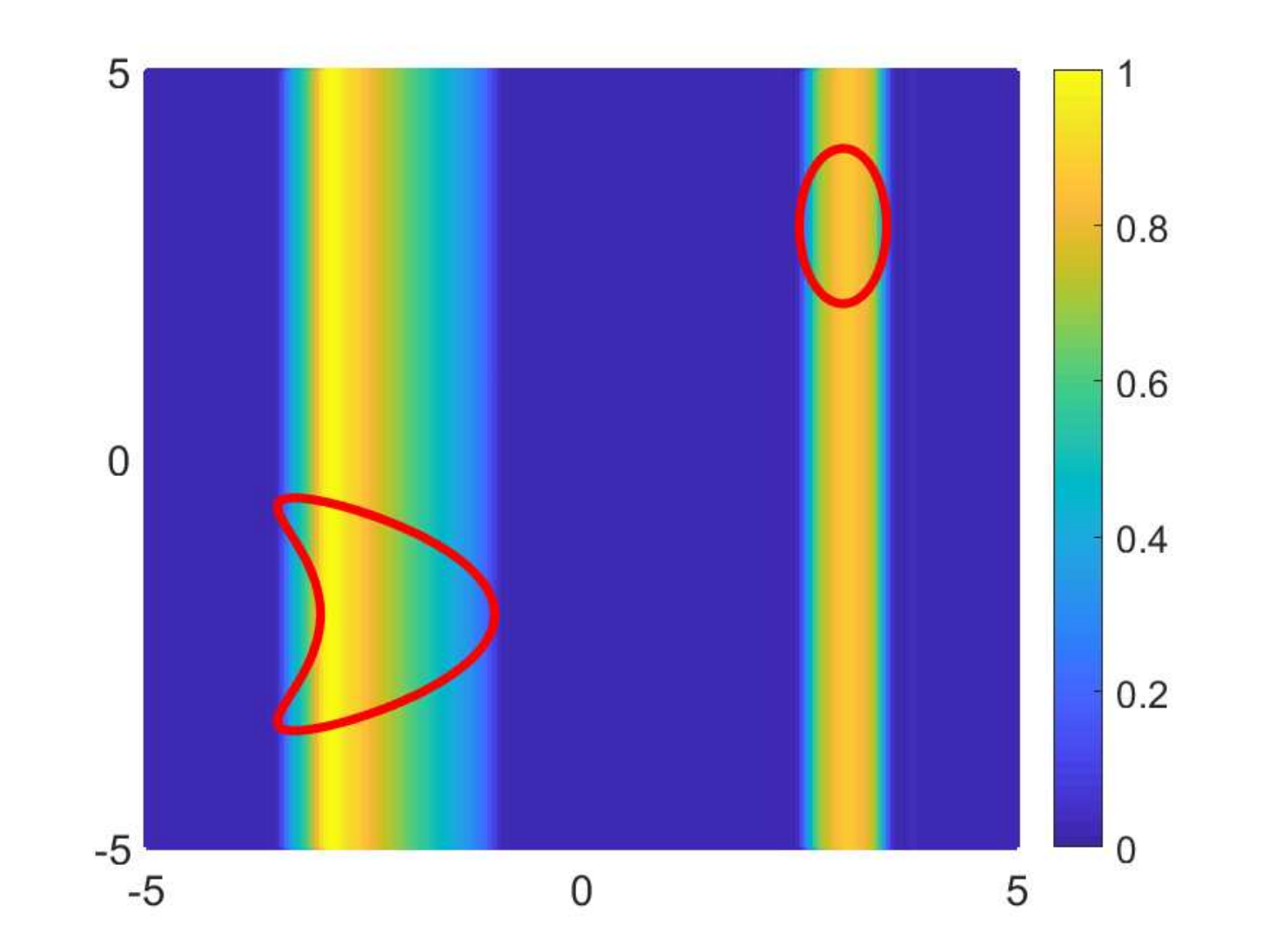}
		
	}

	\caption{Reconstructions of a kite-ellipse-shaped support from different observation angles. Here $F(x,t)=(x_1^2+x_2^2+10)t$ and the Fourier transform window is taken as $(0,0.1)$.
	} \label{fig:1dir2D}
\end{figure}

\subsection{Reconstructions from sparse observation directions}

In this subsection, we test the performance of the multi-frequency sampling method  with $M$ observation directions $\hat{x}_m=(\cos{\theta_m},\ \sin{\theta_m})$, $\theta_m=\dfrac{m-1}{M}\pi,\ m=1,2,\cdots,M $. The experimental results for reconstructing a kite-shaped source support are shown in Fig. \ref{fig:Mdir}, where $F(x,t)=(x_1^2+x_2^2+10)t$. Evidently, the recovery quality has been improved as the number of observation angles increases in Fig.\ref{fig:Mdir}. In the case $M =8$, the shape of the kite can be well restored.

\begin{figure}[H]
	\centering
	\subfigure[$M=2$]{
		\includegraphics[scale=0.3]{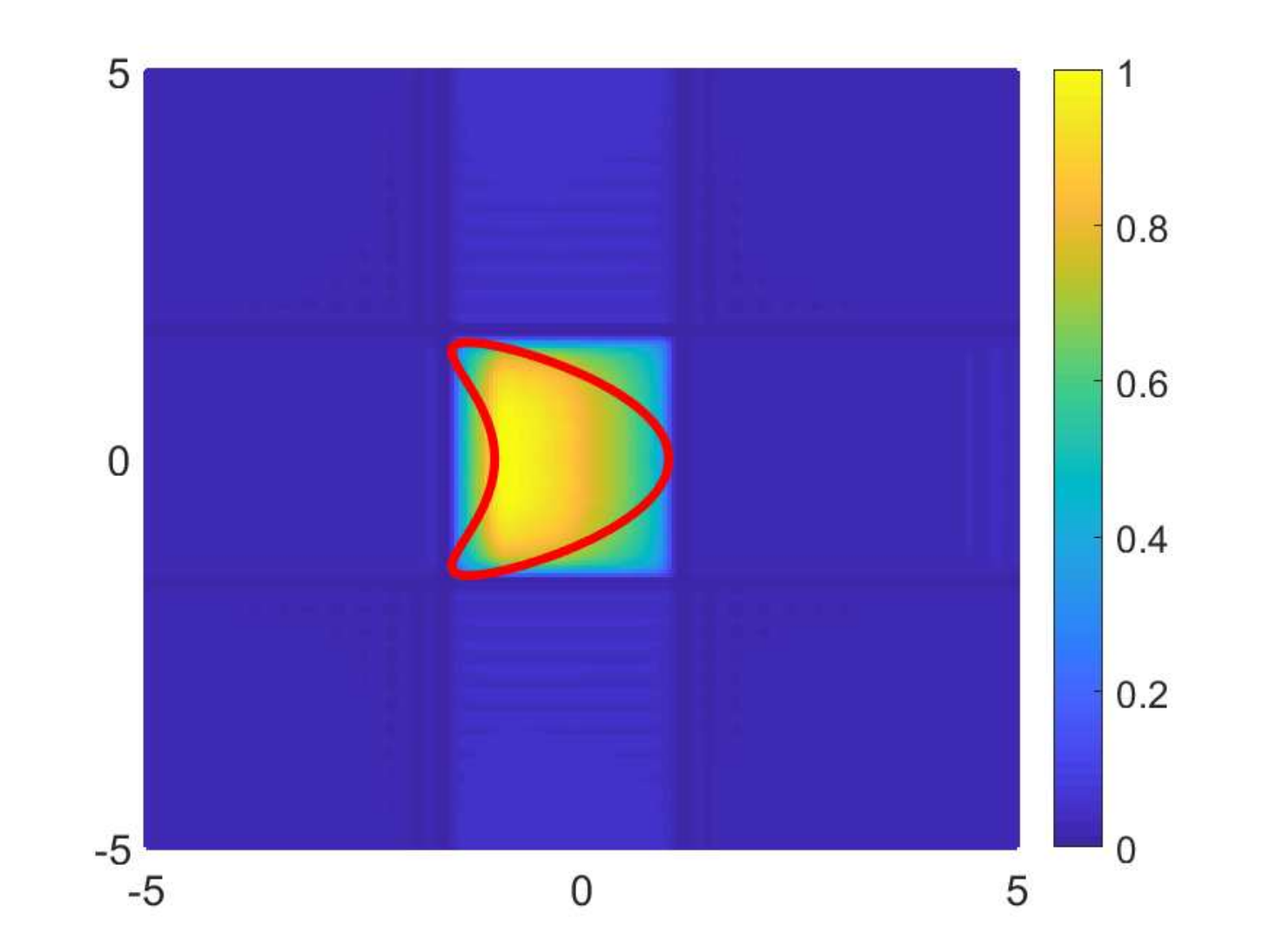}
	}
	\subfigure[$M=4$ ]{
		\includegraphics[scale=0.3]{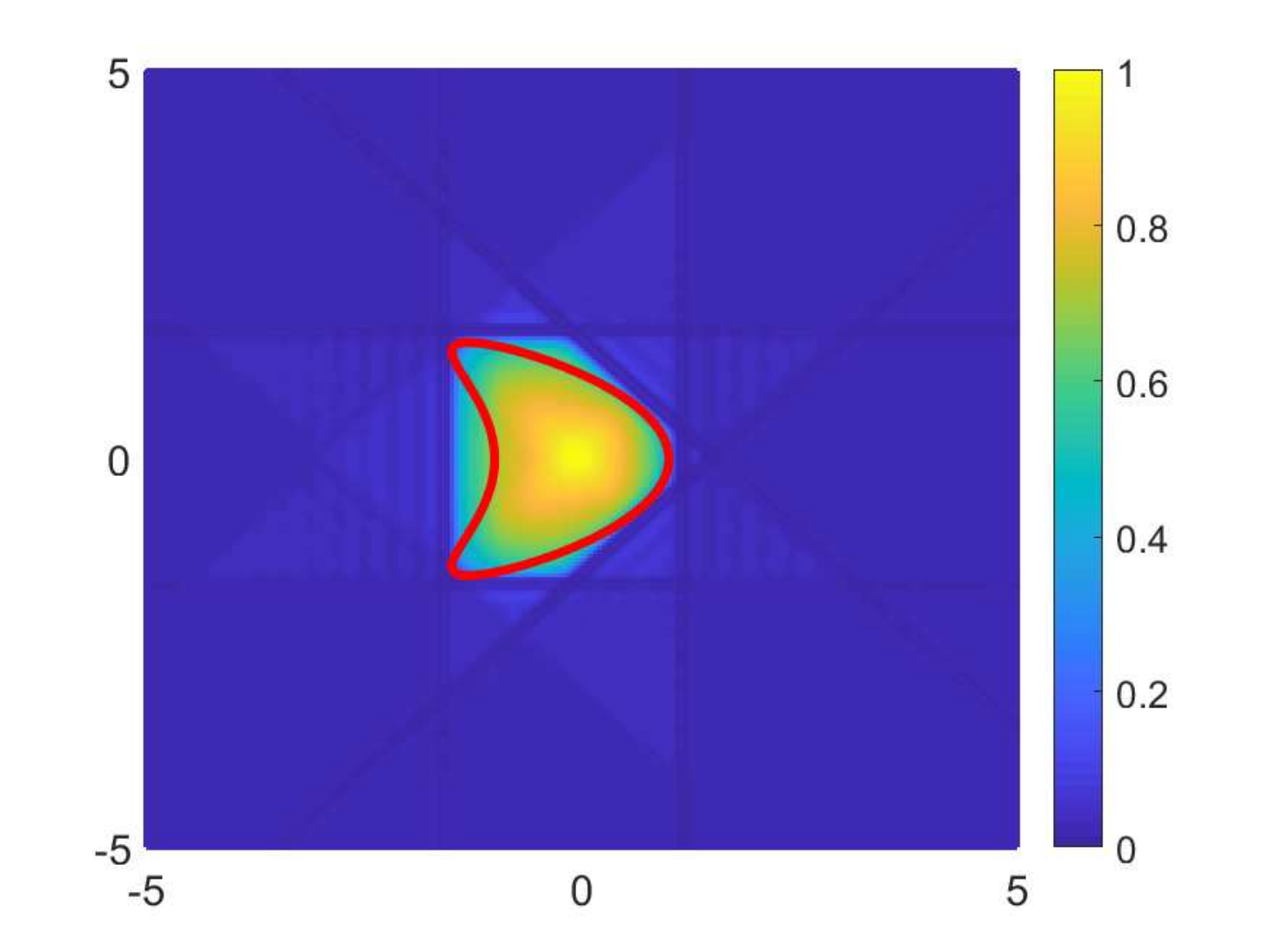}
		
	}
	\subfigure[$M=8$]{
		\includegraphics[scale=0.3]{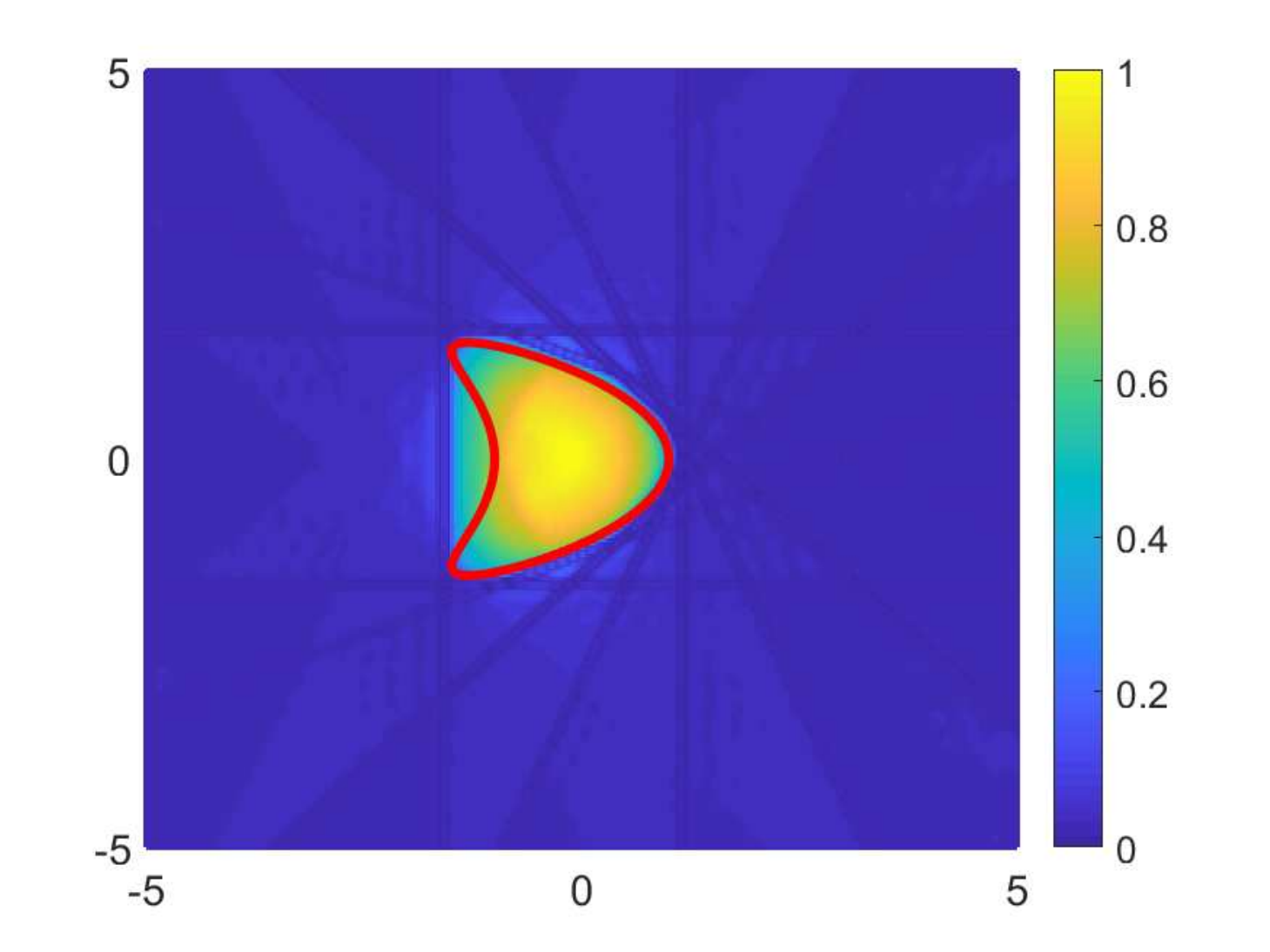}
		
	}
	
	\caption{Reconstructions of two disconnected supports with the source function $F(x,t)=(x_1^2+x_2^2+10)t$, the Fourier transform window $(0, 0.1)$ and using $M$ directions.
	} \label{fig:Mdir}
\end{figure}

As done in the single observation case, we also illustrate the reconstructions with different radiating periods $(0,T)$ of the Fourier transformation in Fig.\ref{fig:MdirT}. Fixing $t_{\min}=0$ and $K=20$, we set $T=t_{\max}-t_{\min}=1$ and $N=200$ in Fig.\ref{fig:MdirT} (a); $T=3$ and $N=200$ in Fig.\ref{fig:MdirT} (b); $T=5$ and $N=300$ in Fig.\ref{fig:MdirT} (c). %$T=8$ and $N=500$ in Fig.\ref{fig:MdirT}(d).
Even for large $T$ we observe that the reconstructed $\Theta$-convex hull reflects the location and shape of the support.

In Fig.\ref{fig:Mdir2}, we focus on the case of two disconnected supports. The kite-ellipse-shaped domain can be restored pretty well by using $16$ observations. We set the source function $F(x,t)=(x_1+10)t$ supported on the  kite centered at $(0,-2)$ and the ellipse centered at $(0,3)$ in Fig.\ref{fig:Mdir2}(a); $F(x,t)=(x_1^2+x_2^2+10)t$ supported on the kite centered at $(-2,-2)$ and the ellipse centered at $(3,3)$ in Fig.\ref{fig:Mdir2}(b); $F(x,t)=(x_2+10)t$ with the kite-center at $(-2,0)$ and the ellipse center at $(3,0)$ in Fig.\ref{fig:Mdir2}(c). These figures are truncated by a threshold $\varepsilon=0.30$ in Figures \ref{fig:Mdir2}(d), (e) and (f).

\begin{figure}[H]
	\centering
	\subfigure[$T=1$]{
		\includegraphics[scale=0.3]{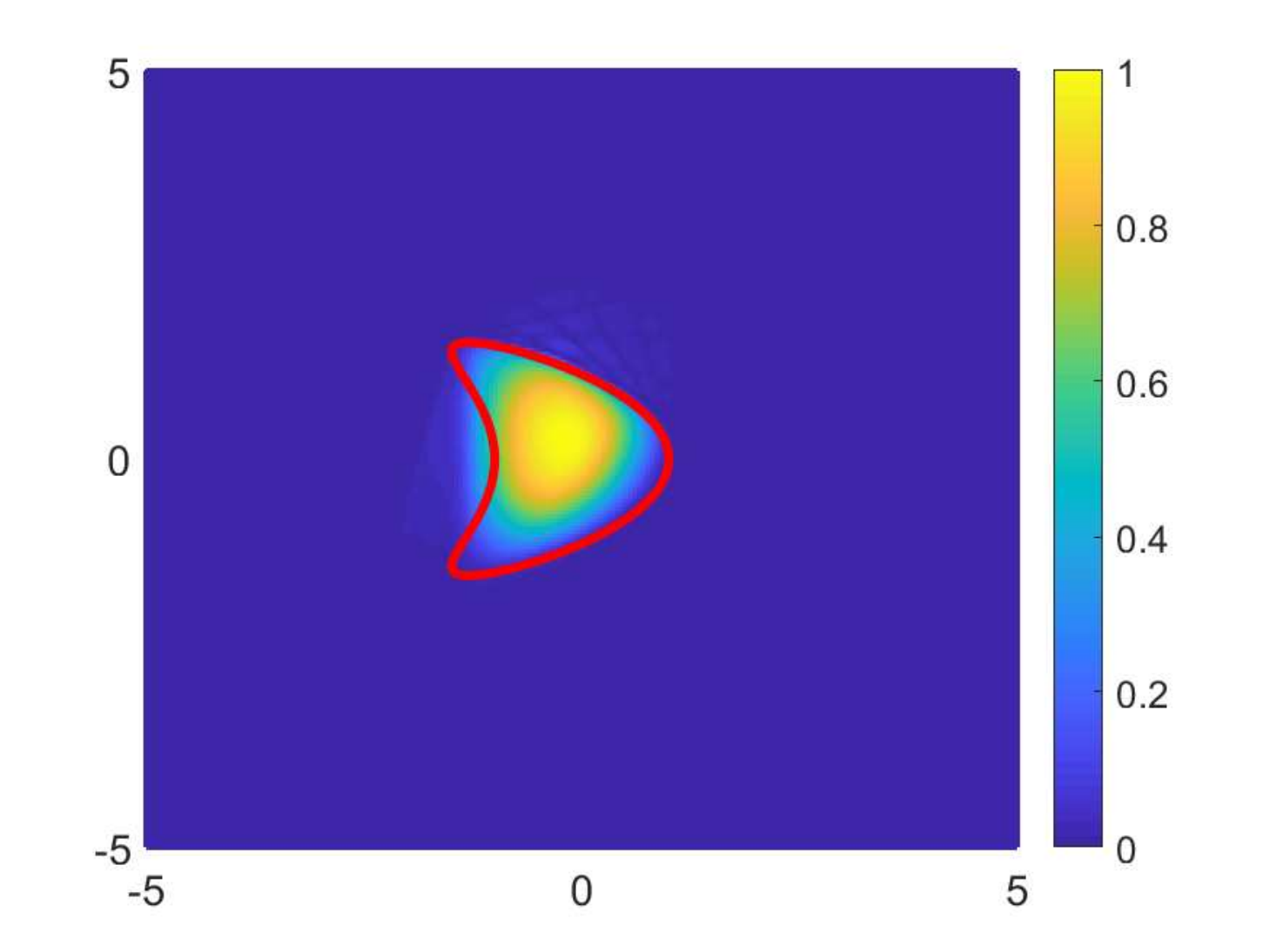}
	}
	\subfigure[$T=3$ ]{
		\includegraphics[scale=0.3]{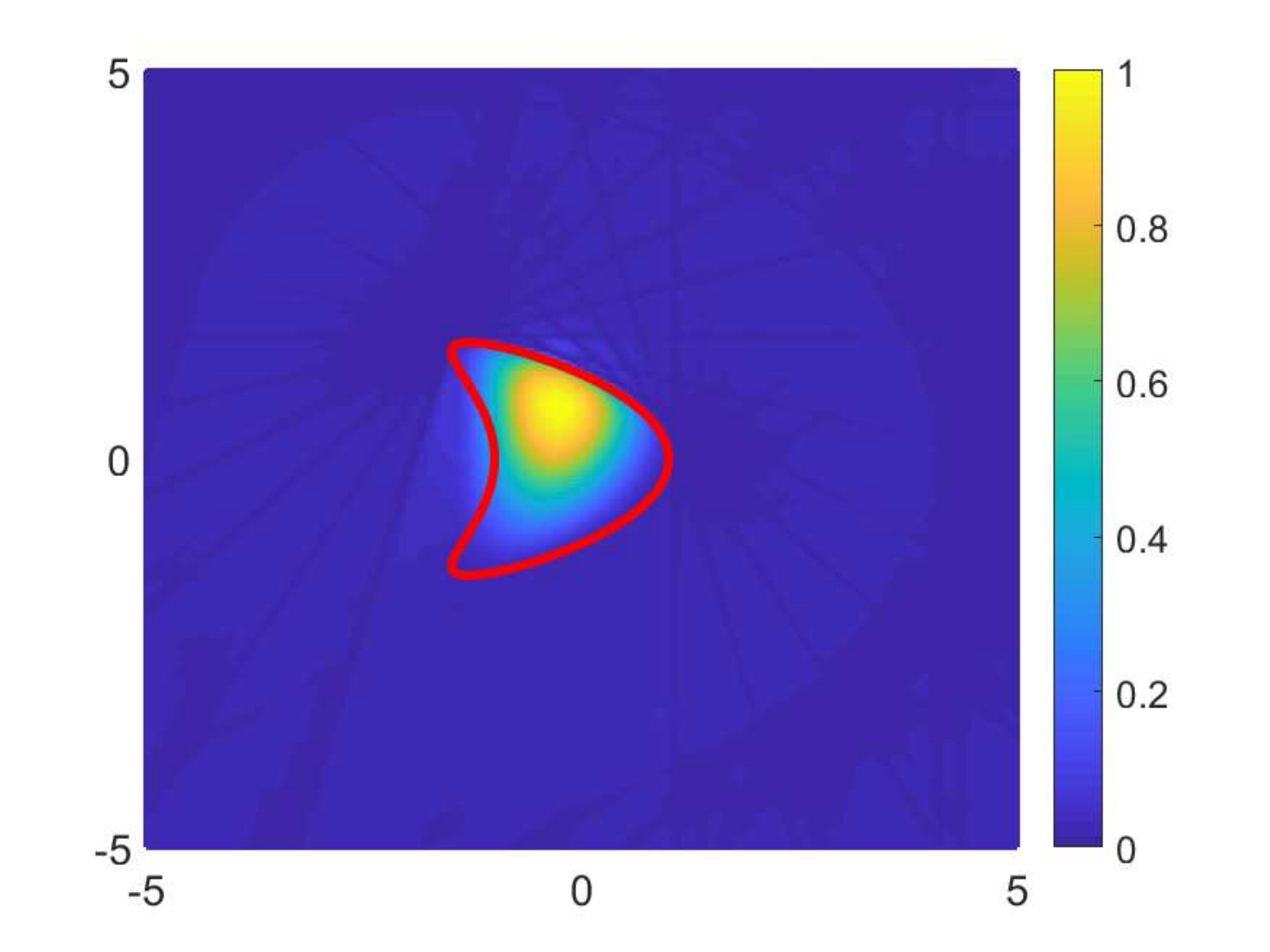}
		
	}
	\subfigure[$T=5$]{
		\includegraphics[scale=0.3]{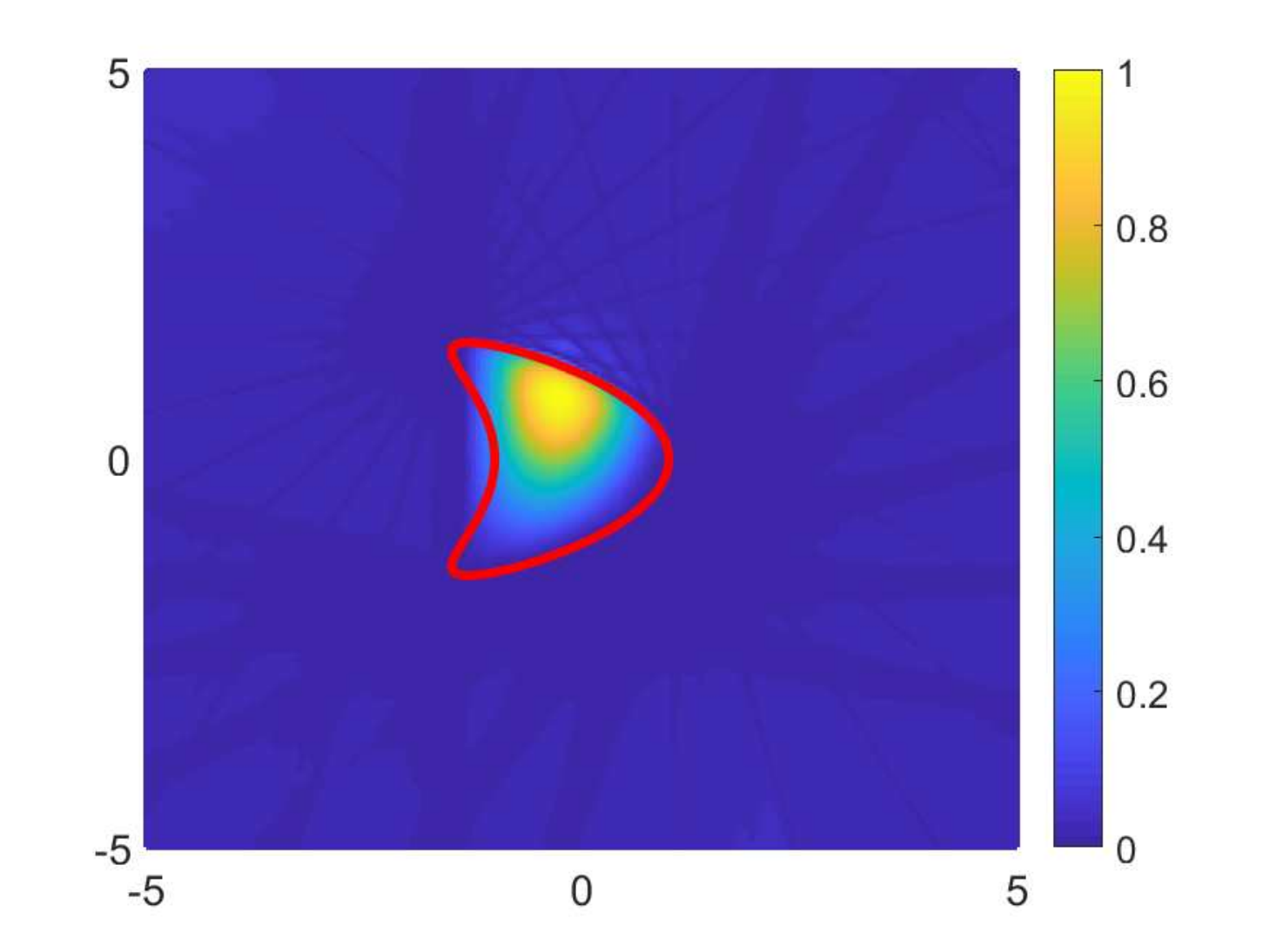}
		
	}

	\caption{Reconstructions of a kite-shaped support source $F(x,t)=(x_1^2+x_2^2+10)t$ with different Fourier transform windows $(0, T)$ by using $M=12$ observations.
	} \label{fig:MdirT}
\end{figure}

\begin{figure}[H]
	\centering
	\subfigure[$F=(x_1+10)t$]{
		\includegraphics[scale=0.3]{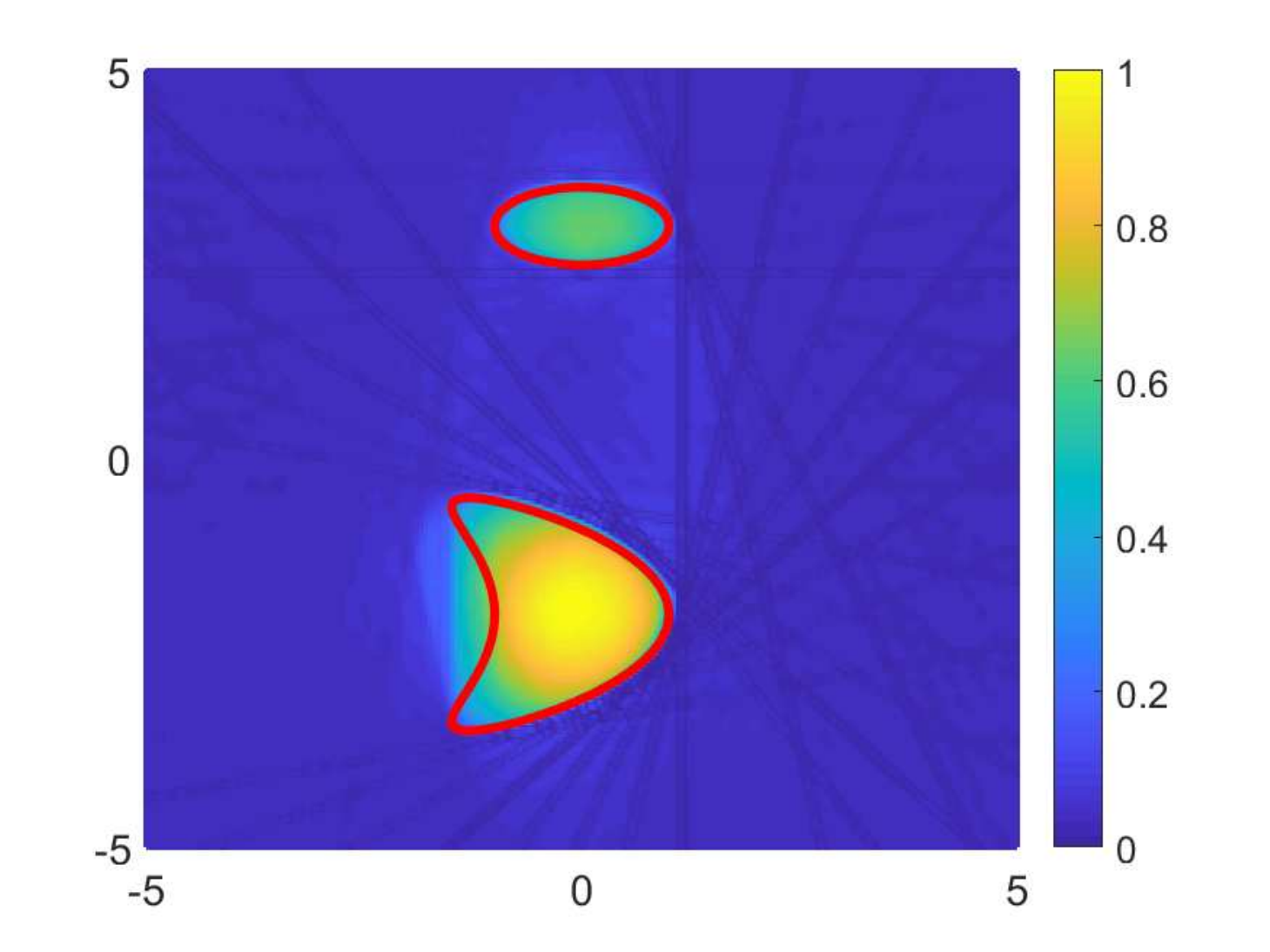}
	}
	\subfigure[$F=(x_1^2+x_2^2+10)t$ ]{
		\includegraphics[scale=0.3]{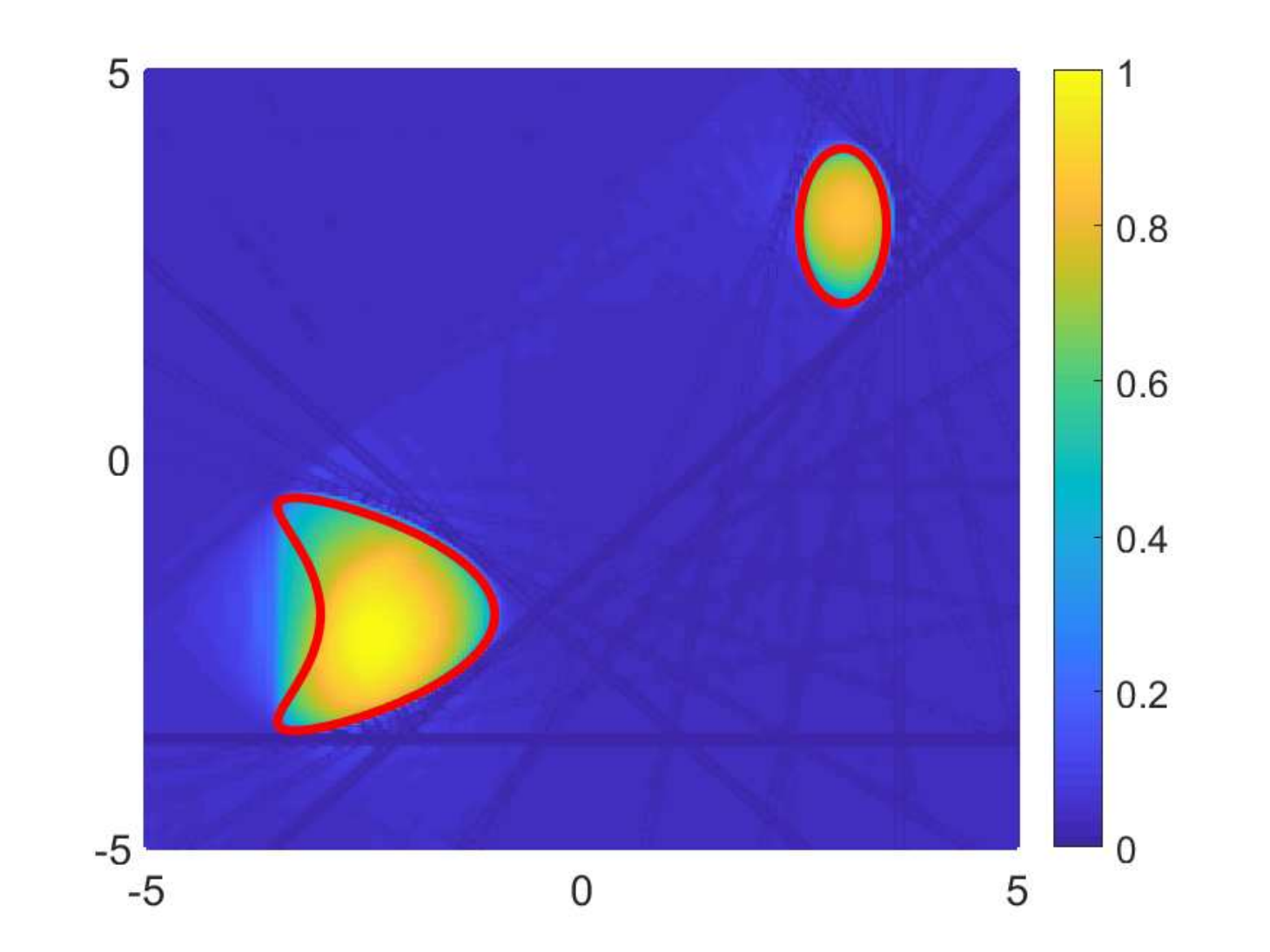}
		
	}
	\subfigure[$F=(x_2+10)t$]{
		\includegraphics[scale=0.3]{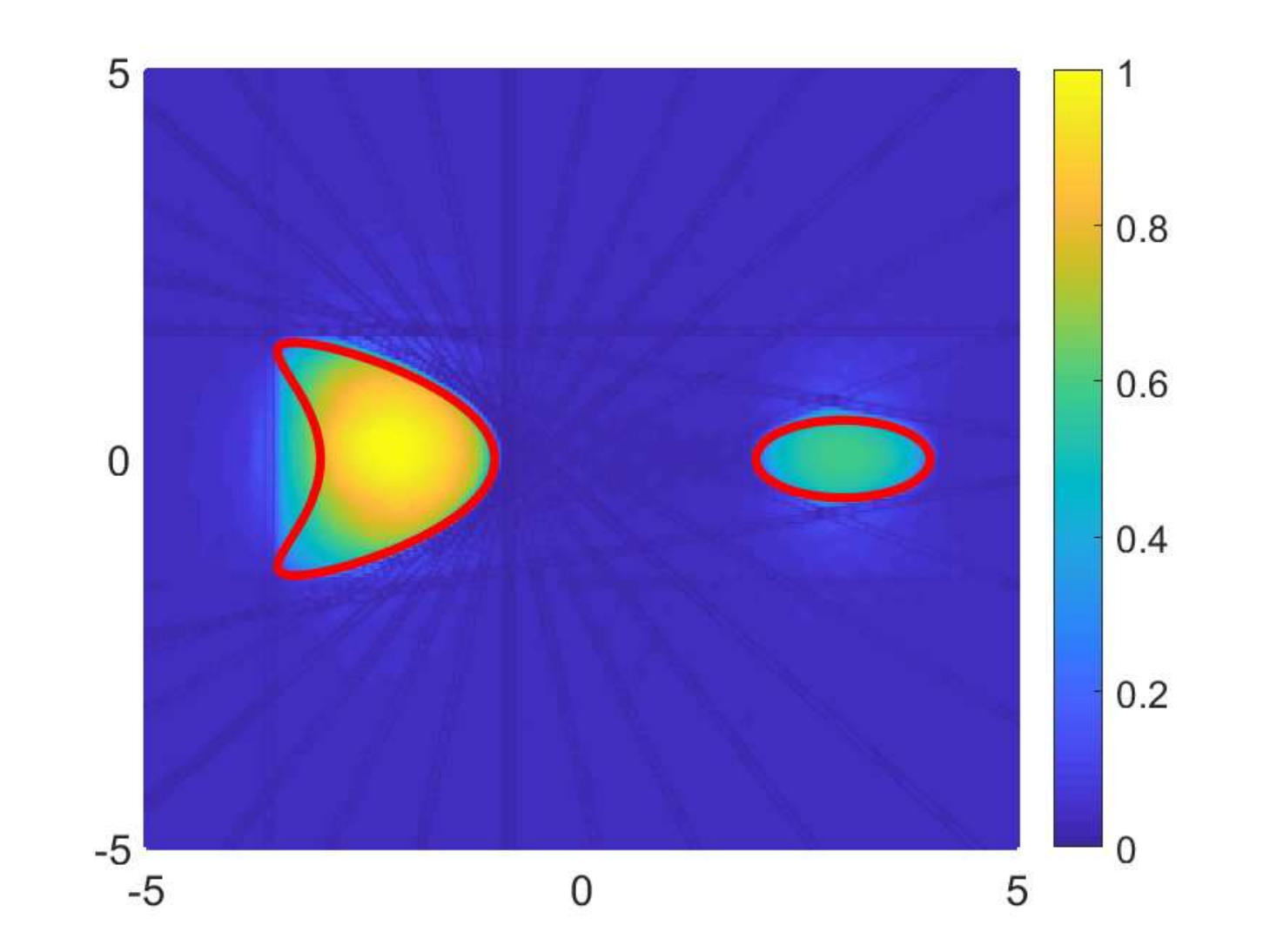}
		
	}
	
	\subfigure[$F=(x_1+10)t$, $\varepsilon=0.30$]{
		\includegraphics[scale=0.3]{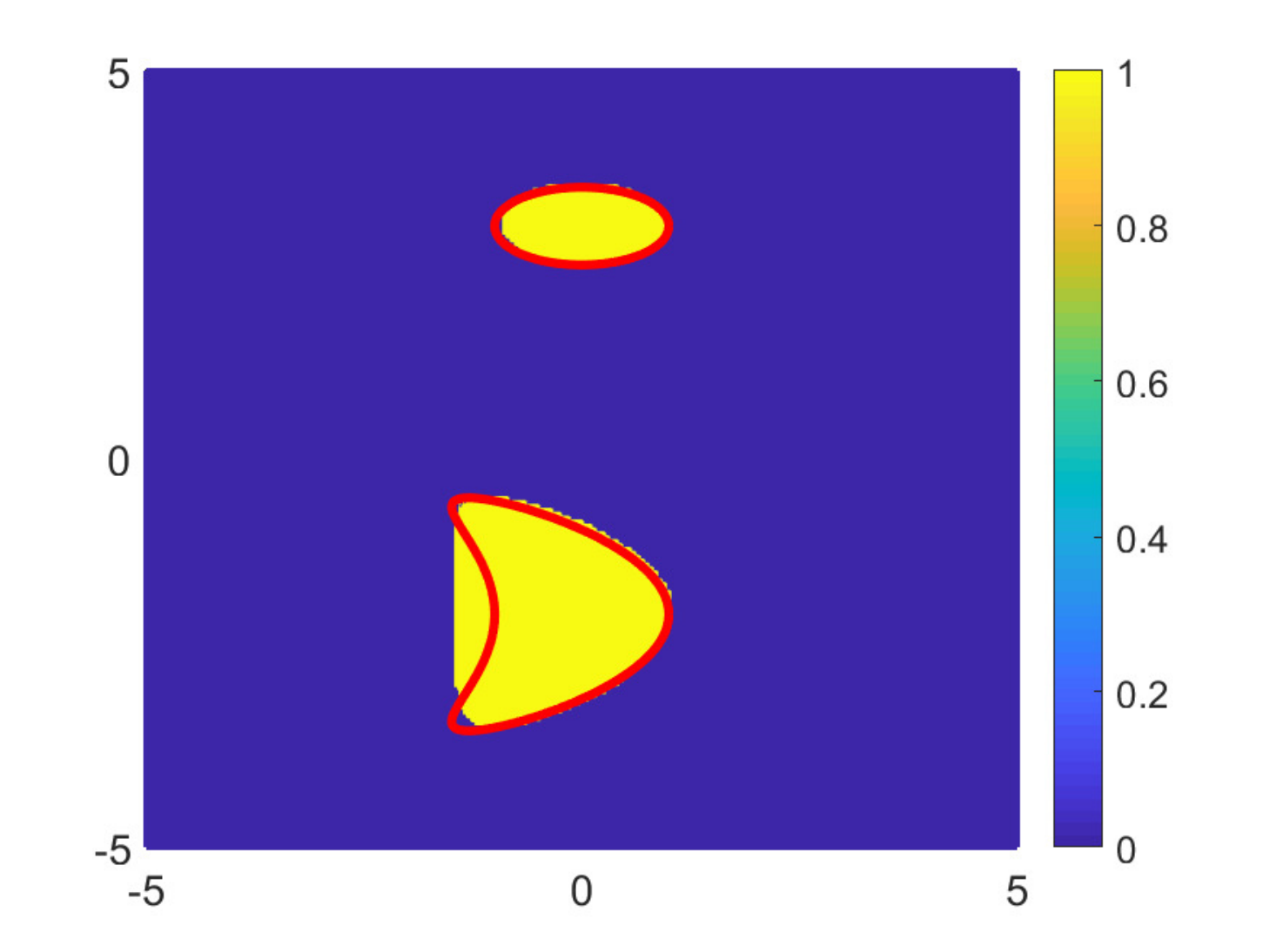}
	}
	\subfigure[$F=(x_1^2+x_2^2+10)t$, $\varepsilon=0.30$ ]{
		\includegraphics[scale=0.3]{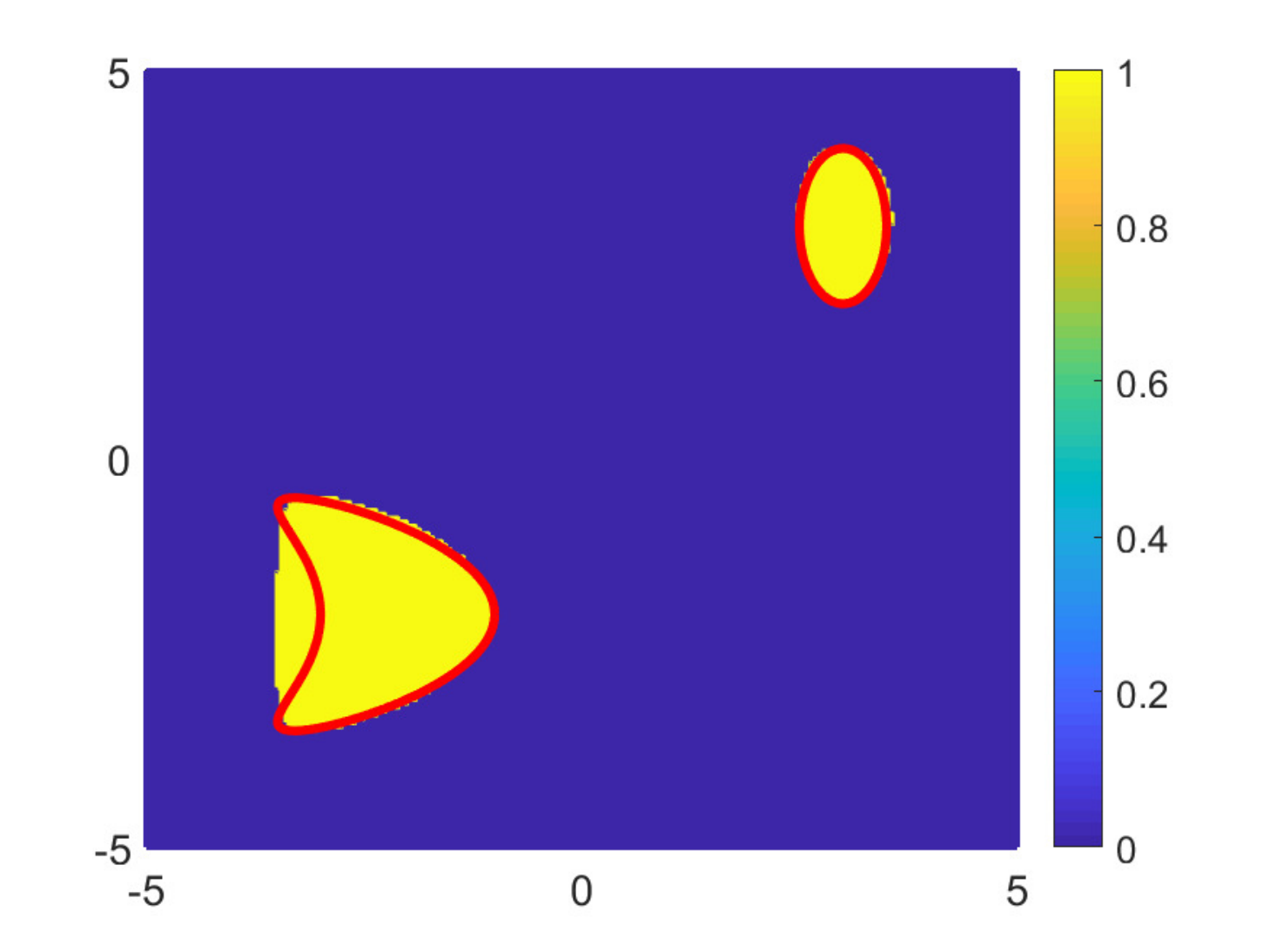}
		
	}
	\subfigure[$F=(x_2+10)t$, $\varepsilon=0.30$]{
		\includegraphics[scale=0.3]{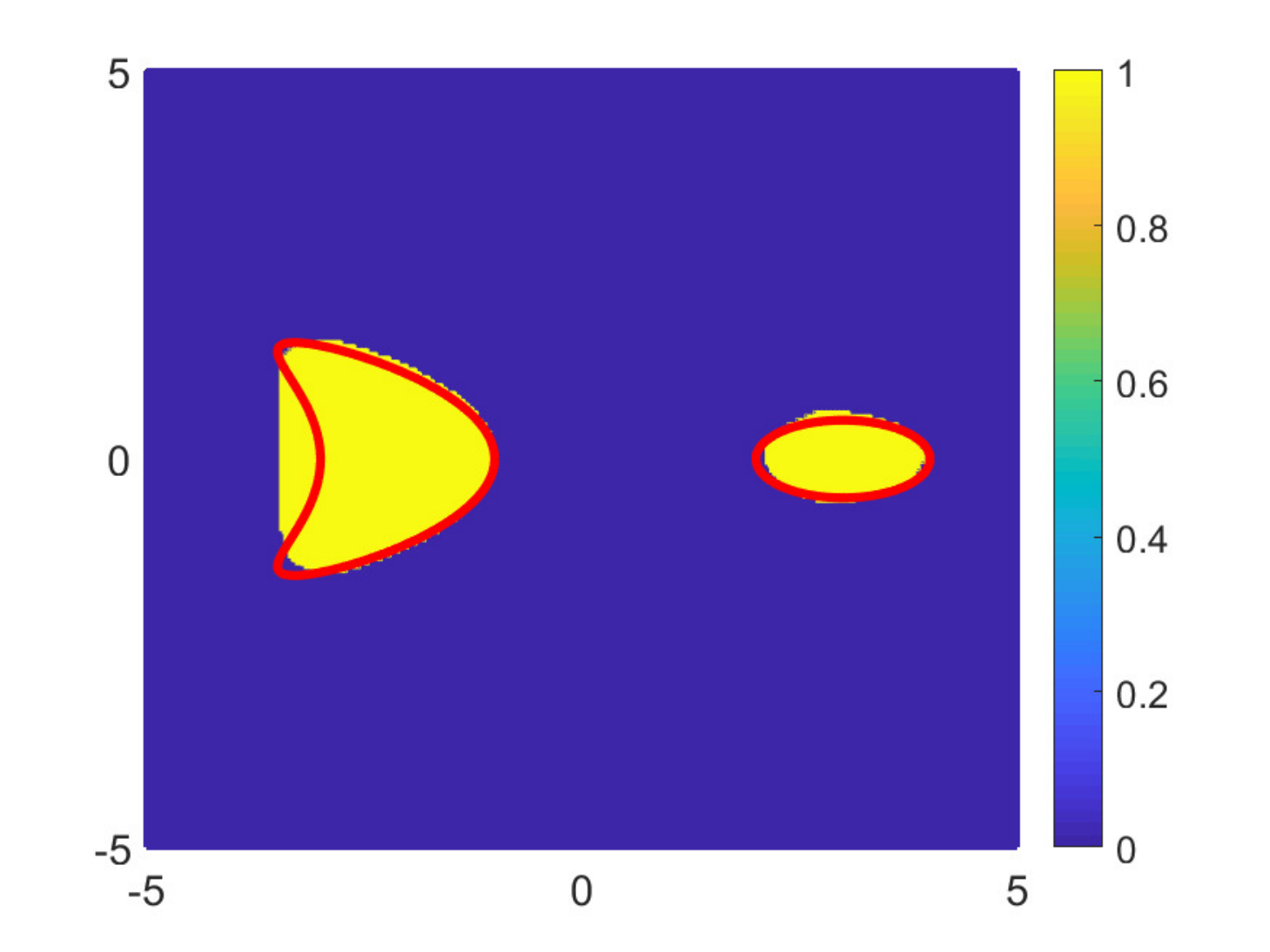}
			}
	
	\caption{Reconstructions of a kite-ellipse-shaped support with different locations and sources by using $16$ observations. The Fourier transform windows is $(0, 0.1)$.
	} \label{fig:Mdir2}
\end{figure}

We show the experimental examples in Fig. \ref{fig:Mdir2T} by using different Fourier transform windows $(0,T)$. The reconstructions of two disconnected supports are displayed in Fig.\ref{fig:Mdir2T}, where we fix $t_{\min}=0$ and $K=20$. We choose $T=t_{\max}-t_{\min}=1$ and $N=200$ in Fig.\ref{fig:Mdir2T} (a), $T=3$ and $N=300$ in Fig.\ref{fig:Mdir2T} (b), $T=7$ and $N=300$ in Fig.\ref{fig:Mdir2T} (c). In the last case of $T=7$,
the reconstruction is distorted because the assumption (A) is not satisfied. Here we choose $M=16$ observation directions.

Finally, we examine the sensitivity of the direct sampling algorithm to noise.  The synthetic data is polluted by noise via the following formula
\be
u_{\delta}^{\infty}(\hat{x},k)= u^{\infty}(\hat{x},k)+\delta \mathbf{R}\circ \, u^{\infty}(\hat{x},k),
\en
where $\delta$ is the noise level, $\mathbf{R}\in\R^{W\times N}$ is a uniformly distributed random matrix with the random variable ranging from $-1$ to $1$ and $\circ$ represents the Hadamard product. We test three different noise levels $\delta=10\%$, $50\%$, and $100\%$. The recovered results are displayed in Fig.\ref{fig:Mdir2noise}, from which one can conclude that the inversion algorithm is rather robust against noise. This phenomenon has also been reported in other existing literatures (see e.g., \cite{CCH13,L17}). In our tests, we find that boundaries of the strips $\mathcal{S}^{(\hat{x}_m)}$ are also visible at high noise levels.
\begin{figure}[H]
	\centering
	\subfigure[$T=1$]{
		\includegraphics[scale=0.3]{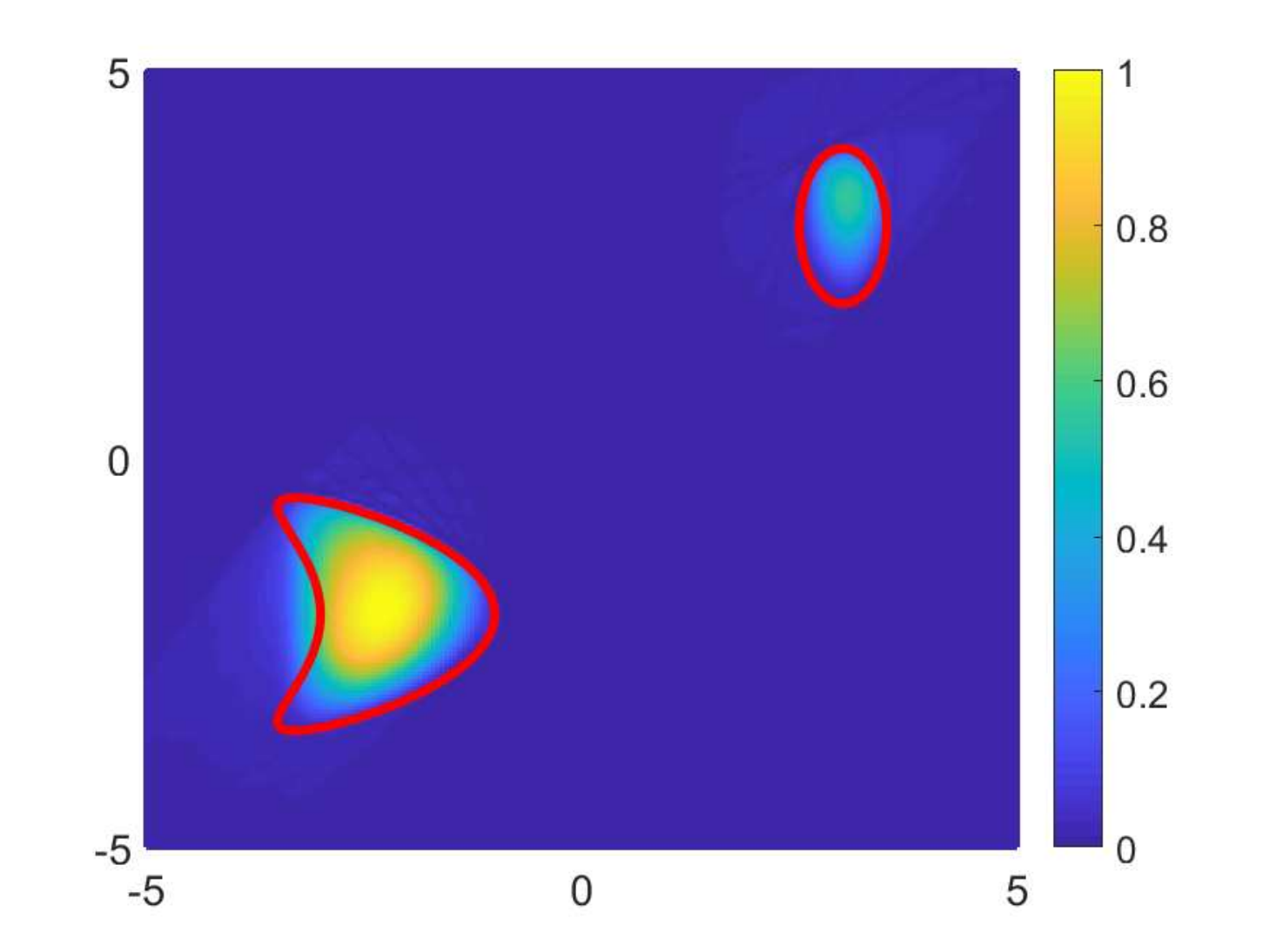}
	}
	\subfigure[$T=3$ ]{
		\includegraphics[scale=0.3]{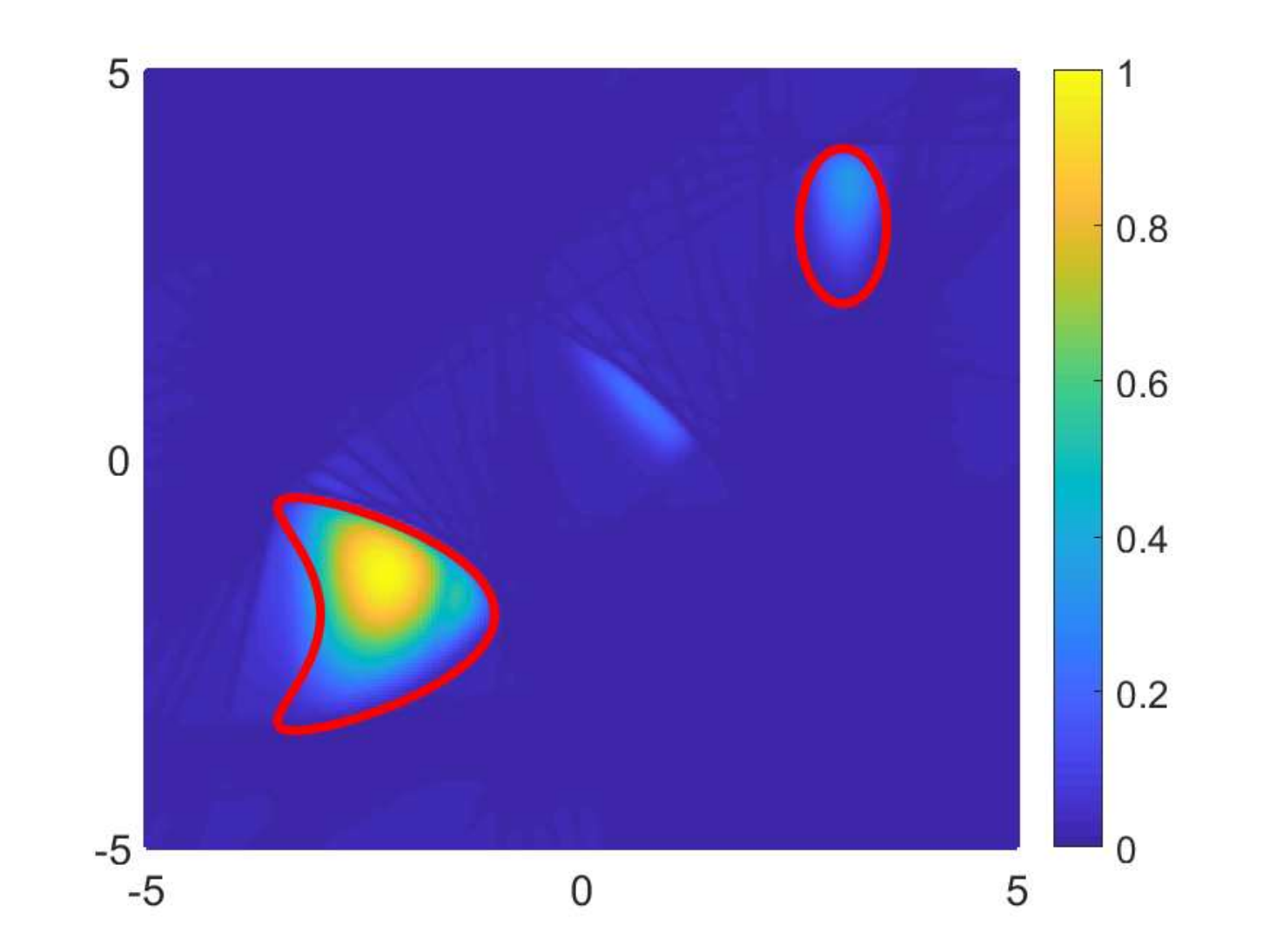}
		
	}
	\subfigure[$T=7$]{
		\includegraphics[scale=0.3]{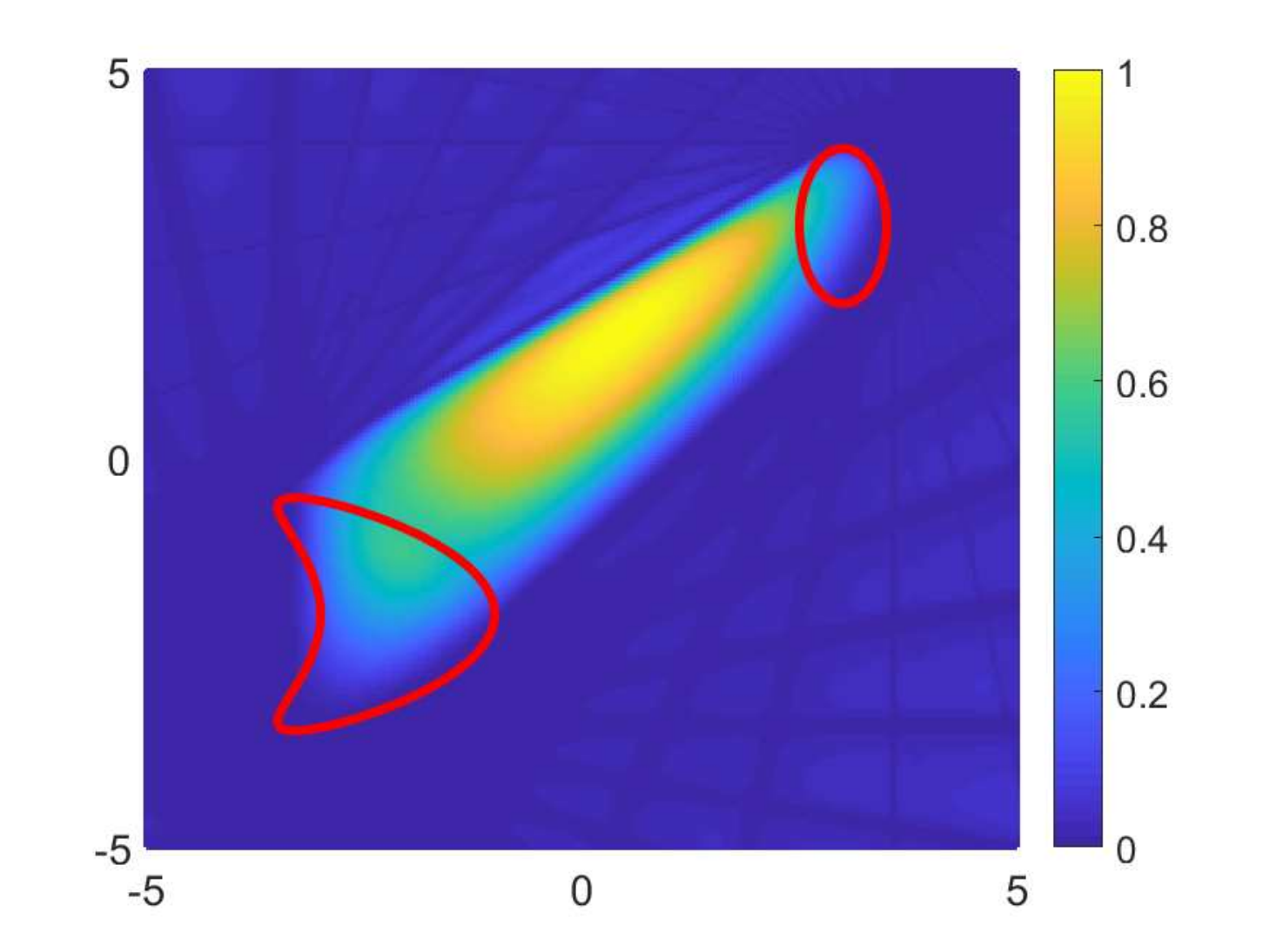}
		
	}
	\caption{Reconstructions of a kite-shaped support with $F(x,t)=(x_1^2+x_2^2+10)t$ with different Fourier transform windows $(0, T)$.
	} \label{fig:Mdir2T}
\end{figure}

\begin{figure}[H]
	\centering
	\subfigure[$\delta=10\%$]{
		\includegraphics[scale=0.3]{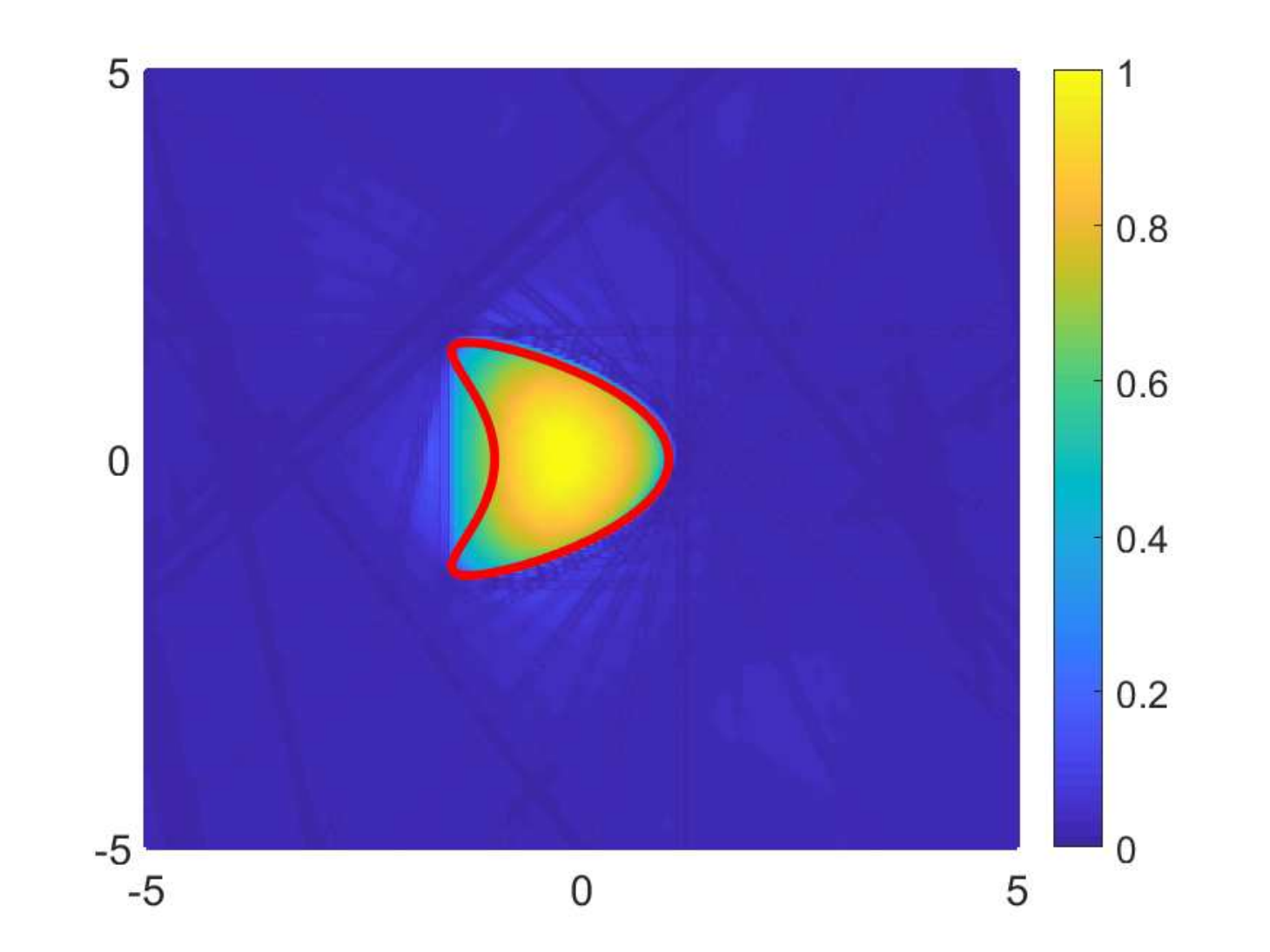}
	}
	\subfigure[$\delta=50\%$ ]{
		\includegraphics[scale=0.3]{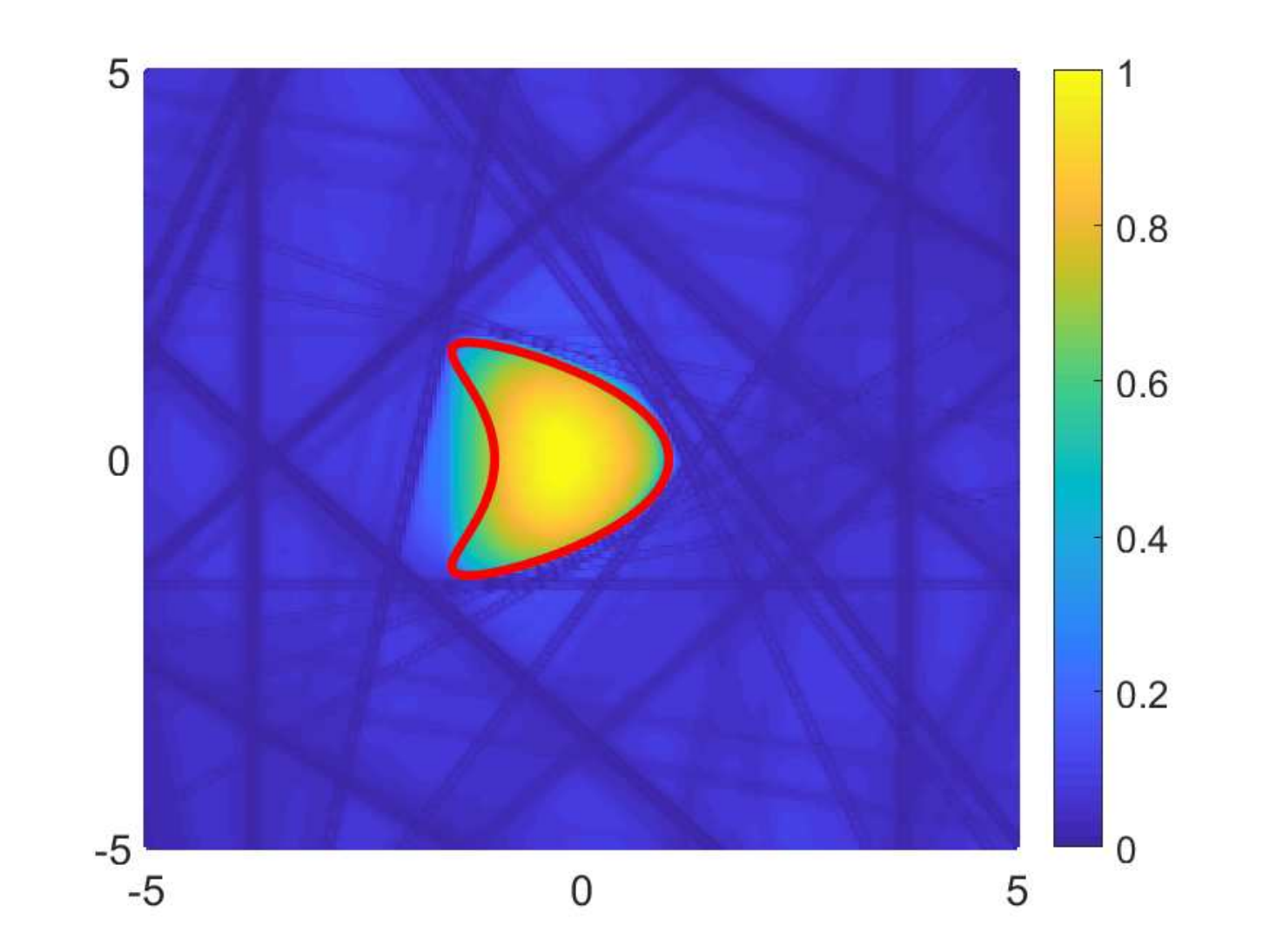}
		
	}
	\subfigure[$\delta=100\%$]{
		\includegraphics[scale=0.3]{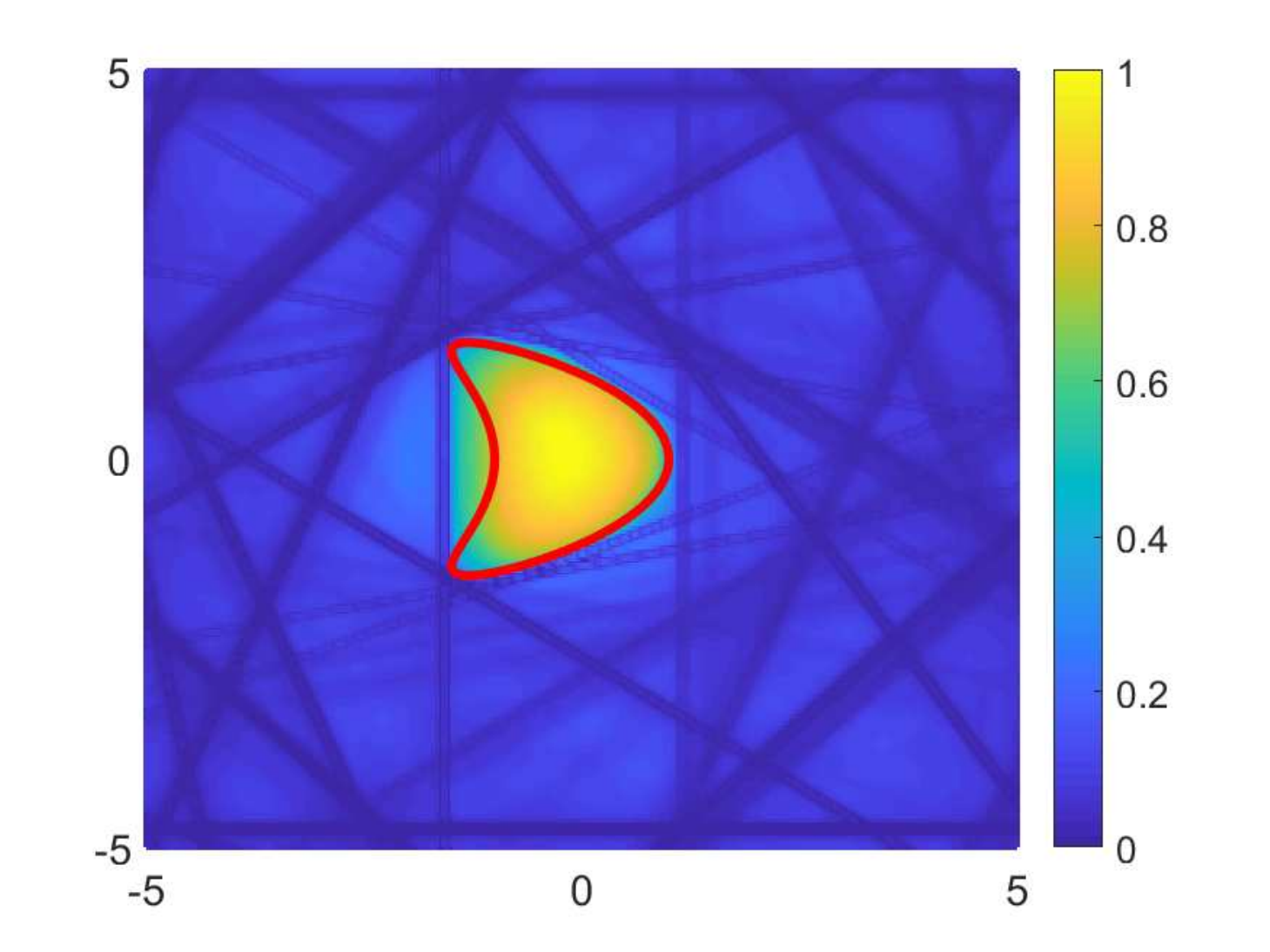}
		
	}
	\caption{Reconstructions of a kite-shaped support with $F(x,t)=(x_1^2+x_2^2+10)t$ with different noise levels $\delta$.
	} \label{fig:Mdir2noise}
\end{figure}

\section{Reconstructions from near-field measurements in $\R^3$}\label{sec:5}
In this section, we suppose that $D\subset B_R\subset \R^3$ for some $R>0$ and take the near-field measurement data on sparse observation points lying on $|x|=R$. We remark that it seems difficult to extend the results of this section to two dimensions, perhaps due to the lack of Huygens' principle in 2D. %However, the two-dimensional arguments from the previous sections can be carried over to three dimensions naturally.
%In a recent publication \cite{LMZ22}, a modified direct sampling method was investigated for imaging the shape of a frequency-independent source term, where the near-field case was also.
For simplicity we supposed that $D$ is connected. For source supports with multiple components, one can proceed with analogous arguments to the far-field case in two dimensions.
\subsection{Indicator and test functions}
By \eqref{expression-w}, the near-field data can be expressed as
\ben
u(x,k)&=&\int _{D} \frac{e^{ik|x-y|}}{4\pi |x-y|}\int_{t_{\min}}^{t_{\max}}F(y,t)e^{-ikt} dtdy,\quad x\in \R^3.
\enn
The supporting interval of the Fourier transform of the near-field data with respect to frequencies is stated below.
\begin{lem}\label{lem5} Let the assumption $(\ref{F})$ hold true and fix some $x_0\in \partial B_R$.	 The supporting interval of $\mathcal{F}^{-1}[u(x_0,k)]$ is $$H_{0}:=\big(t_{\min}-\sup_{z\in D}|x_0-z|,\ t_{\max}-\inf_{z\in D}|x_0-z|\big).$$ Moreover, the function $t\longmapsto (\mathcal{F}^{-1}u)(t)$ is positive in $H_{0}$.
\end{lem}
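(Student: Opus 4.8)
The plan is to run the same argument as in the proof of Lemma~\ref{21}, with the affine level sets $\{y\in D:\hat x\cdot y=\text{const}\}$ of the far-field case replaced by spherical level sets centred at $x_0$. Throughout, write $d_{\min}:=\inf_{z\in D}|x_0-z|$, $d_{\max}:=\sup_{z\in D}|x_0-z|$, so that $H_0=(t_{\min}-d_{\max},\,t_{\max}-d_{\min})$, and set $\Gamma_{x_0}(s):=\{y\in D:|x_0-y|=s\}$. The first step is to recast $u(x_0,k)$ as a one–dimensional Fourier transform in the frequency variable. Since $F\in L^\infty(D\times(t_{\min},t_{\max}))$, $D$ is bounded and $y\mapsto 1/|x_0-y|$ is integrable over $D\subset\R^3$, Fubini's theorem gives
\ben
u(x_0,k)=\int_{t_{\min}}^{t_{\max}}\int_D\frac{F(y,t)}{4\pi|x_0-y|}\,e^{-ik(t-|x_0-y|)}\,dy\,dt.
\enn
For each fixed $t$, using that $y\mapsto|x_0-y|$ is smooth away from $x_0$ with $|\nabla_y|x_0-y||\equiv 1$, the coarea formula together with the substitution $\xi=t-|x_0-y|$ turns the inner integral into
\ben
\int_D\frac{F(y,t)}{4\pi|x_0-y|}\,e^{-ik(t-|x_0-y|)}\,dy=\int_\R e^{-ik\xi}\Big(\int_{\Gamma_{x_0}(t-\xi)}\frac{F(y,t)}{4\pi(t-\xi)}\,ds(y)\Big)d\xi,
\enn
the inner surface integral being taken as zero whenever $\Gamma_{x_0}(t-\xi)=\emptyset$ (in particular whenever $t-\xi\le0$). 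Substituting this back and exchanging the order of integration yields $u(x_0,k)=(\mathcal Fg)(k)$; since $g$ is real-valued and compactly supported this gives $\mathcal F^{-1}[u(x_0,\cdot)]=g$, where
\ben
g(\xi)=\int_{t_{\min}}^{t_{\max}}\int_{\Gamma_{x_0}(t-\xi)}\frac{F(y,t)}{4\pi(t-\xi)}\,ds(y)\,dt=\int_{t_{\min}-\xi}^{t_{\max}-\xi}\int_{\Gamma_{x_0}(s)}\frac{F(y,s+\xi)}{4\pi s}\,ds(y)\,ds,
\enn
the last equality using $s=t-\xi$.

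The support and positivity of $g$ are then read off as in Lemma~\ref{21}. Since $D$ is connected and $y\mapsto|x_0-y|$ is continuous, its range over $D$ is an interval with endpoints $d_{\min},d_{\max}$, so $\Gamma_{x_0}(s)=\emptyset$ unless $s\in[d_{\min},d_{\max}]$; hence $g(\xi)=0$ as soon as $(t_{\min}-\xi,\,t_{\max}-\xi)\cap(d_{\min},d_{\max})=\emptyset$, i.e. whenever $\xi\le t_{\min}-d_{\max}$ or $\xi\ge t_{\max}-d_{\min}$, which gives $\supp g\subset\overline{H_0}$. For the positivity, fix $\xi\in H_0$; then $J_\xi:=(t_{\min}-\xi,\,t_{\max}-\xi)\cap(d_{\min},d_{\max})$ is a nonempty open interval, and for $s\in J_\xi$ one has $s>0$, $s+\xi\in(t_{\min},t_{\max})$, and $\Gamma_{x_0}(s)$ is a nonempty relatively open subset of the sphere $\{|x_0-y|=s\}$, hence of positive surface measure. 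Arguing exactly as for $g$ in Lemma~\ref{21}, the positivity assumption \eqref{F} forces $\int_{\Gamma_{x_0}(s)}F(y,s+\xi)\,ds(y)>0$ for such $s$, so that
\ben
g(\xi)\ \ge\ \int_{J_\xi}\frac{1}{4\pi s}\Big(\int_{\Gamma_{x_0}(s)}F(y,s+\xi)\,ds(y)\Big)ds\ >\ 0.
\enn
Combining the two statements, the supporting interval of $\mathcal F^{-1}[u(x_0,\cdot)]=g$ is exactly $H_0$ and $g>0$ on $H_0$.

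The only genuinely new ingredient relative to Lemma~\ref{21} is the first step: converting the three-dimensional integral over $D$, with the integrable singular weight $1/|x_0-y|$ and the phase $e^{ik|x_0-y|}$, into a one-dimensional Fourier integral. This is where I would be most careful, namely in justifying the coarea (layer-cake) identity and the interchange of integrals. Structurally, though, the computation is identical to the far-field one precisely because $|\nabla_y|x_0-y||\equiv 1$, the spheres $\Gamma_{x_0}(s)$ playing the role of the hyperplanes $\Gamma(t)$; and the positivity step inherits — harmlessly, and exactly as in the proof of Lemma~\ref{21} — the same mild imprecision concerning a.e.\ positivity of $F$ on lower-dimensional slices.
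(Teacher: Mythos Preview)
Your proof is correct and follows essentially the same approach as the paper: rewrite $u(x_0,k)$ via spherical level sets $\Gamma_{x_0}(s)=\{y\in D:|x_0-y|=s\}$ as a one-dimensional Fourier transform $(\mathcal Fg)(k)$, read off $\supp g\subset\overline{H_0}$ from $\Gamma_{x_0}(s)=\emptyset$ outside $[d_{\min},d_{\max}]$, and deduce positivity on $H_0$ from the nonempty overlap of $(t_{\min}-\xi,t_{\max}-\xi)$ with $(d_{\min},d_{\max})$ together with assumption~\eqref{F}. Your positivity argument via the interval $J_\xi$ is a slightly more streamlined packaging of the paper's $\varepsilon$-parametrization, and you are more explicit about the coarea justification, but the structure and ideas are identical.
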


\begin{proof} For $t\in \R$,
define $$\Gamma(t):=\left\lbrace y\in D:|x_0-y|=t\right\rbrace.$$%\inf_{z\in D}|x-z|<t<\sup_{z\in D}|x-z|  \right\rbrace , $$
 Like the far-field case, we reformulate the near-field expression as the Fourier transform by
	\be
	u(x_0,k)
	&=&\int_{t_{\min}}^{t_{\max}} \int_{D} F(y,t) \frac{e^{-ik(t-|x_0-y|)}}{4\pi|x_0-y|}dy dt \qquad \nonumber \\ \nonumber
	&=&\int_{t_{\min}}^{t_{\max}} \int_{\R}\int_{\Gamma(t-\xi)} F(y,t)   \frac{e^{-ik\xi}}{4\pi(t-\xi)}ds(y)d\xi dt  \\ \label{wh}
	&=&\int_{\R} e^{-ik\xi} h(\xi) d\xi =(\mathcal{F}h)(k),
	\en
	where
	\be\label{h}
	h(\xi):=\int_{t_{\min}-\xi}^{t_{\max}-\xi} \int_{\Gamma(t)}\frac{F(y,t+\xi)}{4\pi t}ds(y)dt,\quad \xi\in \R.
	\en
%Here $\Gamma(t)$ is defined as $\Gamma(t):=\left\lbrace y\in D:|x-y|=t,\inf_{z\in D}|x-z|<t<\sup_{z\in D}|x-z|  \right\rbrace $, $t\in \R$.
%Since $\Gamma(t)=\emptyset$ for $t<\inf_{z\in D}|x-z|$ or $t>\sup_{z\in D}|x-z|$,
Since $\Gamma(t)=\emptyset$ for $t<\inf_{z\in D}|x_0-z|$ or $t>\sup_{z\in D}|x_0-z|$,
we have $h(\xi)=0$ for $\xi<t_{\min}-\sup_{z\in D}|x_0-z|$ or $\xi>t_{\max}-\inf_{z\in D}|x_0-z|$. Thus, $$ \supp\,(\mathcal{F}^{-1}u(x_0, k))=\supp \,(h)\subset H_0.$$

Next we show that $h$ is positive for $t\in H_0$. For $\xi\in H_0$, we suppose that $$\xi=t_{\min}-\sup_{z\in D}|x-z|+\varepsilon\quad\mbox{for some}\quad\varepsilon \in (0,\Lambda),$$
with the notations $$\Lambda:=T+L, \quad L:=\sup_{z\in D}|x_0-z|-\inf_{z\in D}|x_0-z|.$$ The number $\Lambda>0$ represents the length of the interval $H_0$.
By the assumption $(\ref{F})$, we deduce from \eqref{h} that
	\be
	h(\xi)=\int_{\inf_{z\in D}|x_0-z|}^{\sup_{z\in D}|x_0-z|-\varepsilon+T}\int_{\Gamma(t)}\frac{F(y,t+\xi)}{4\pi t}ds(y)dt.
	\en
We observe that $$\xi+t\in \big(t_{\min}-L+\varepsilon, t_{\max}\big)\qquad \mbox{if}\quad \  t\in\big(\inf_{z\in D}|x_0-z|),\;\sup_{z\in D}|x_0-z|-\varepsilon+T\big),$$
and that for any $\epsilon\in(0, \Lambda)$,
\ben
\big(\inf_{z\in D}|x_0-z|),\;\sup_{z\in D}|x_0-z|-\varepsilon+T\big)\,\cap\,
\big(\inf_{z\in D}|x_0-z|),\;\sup_{z\in D}|x_0-z|\big)\neq \emptyset, \\
\big(t_{\min}-L+\varepsilon, t_{\max}\big)\cap
\big(t_{\min}, t_{\max}\big)\neq \emptyset.
\enn
This together with the positivity of $F$ proves $h>0$ in $H_0$.
\end{proof}

%\textcolor{red}{
%We define $I^{(x)}_{y}=\int_{\R}u^{s}(x,k) \ \overline{\psi^{(x)}(y,k)}\ dk $ for a fixed point $x\in \R^{3}$, where $y\in D$ and $\psi^{(x)}(y,k)$ is a test function.
%It is possible to choose a test function $\psi^{(x)}(y,k) $ such that $I^{(x)}_{y}=0$ for $|x-y| \notin (\inf_{z\in D}|x-z|, \sup_{z\in D}|x-z|)$. In fact, $\supp \, I^{(x)}_{y}=(\inf_{z\in D}|x-z|, \sup_{z\in D}|x-z|)$.}

%\textcolor{blue}{
% According to Lemma \ref{lem5}, one can extract information on the interval $\big(t_{\min}-\sup_{z\in D}|x-z|,\ t_{\max}-\inf_{z\in D}|x-z|\big)$
% by taking the inverse Fourier transform of $u ({x}, k)$ for all $k>0$. Since $t_{\min}$ and $t_{\max}$ are both given, this obvious gives information on the interval $|x-D|$. In this section we shall adapt the direct sampling method to  recover the interval  $|x-D|$ in a more straight forward manner. For a fixed observation point $x$, we will design an indicator function $y\longmapsto I(y)$, $y\in \R^3$ such that
% \ben
% I(y)=\left\{\begin{array}{lll}
% 0\quad&&\mbox{if} \quad |x-y| \in |x-D|,\\
%\mbox{finite positive number} \quad&&\mbox{if \; otherwise}.
%  \end{array}\right.
% \enn
%}

\noindent For some fixed $|x|=R$, we introduce two test functions
\ben
\psi^{(x)}_{1}(y,k):=e^{ik(|x-y|-t_{\min})},\quad
\psi^{(x)}_{2}(y,k):=e^{ik(|x-y|-t_{\max})},
\enn
and two auxiliary indicator functions
\be \label{near-Ij}
I^{(x)}_j(y):=\int_{\R}u(x,k) \ \overline{\psi_j^{(x)}(y,k)}\ dk, \quad j=1,2.
\en
From the multi-frequency near-field data $u(x, k)$ at the observation point $x\in\partial B_R$, we want to image the annular domain
\ben \label{annu}
\mathcal{A}^{(x)}:=\{y\in \R^3: \inf_{z\in D}|x-z|<|x-y|<\sup_{z\in D}|x-z|\}.
\enn
For this purpose, we design the indicator function
\be  \label{Indicator-near}
I^{(x)}(y):=\left[\frac{1}{I_1^{(x)}(y)}+  \frac{1}{I_2^{(x)}(y)}   \right]^{-1}=\frac{I_1^{(x)}(y)\; I_2^{(x)}(y)}{I_1^{(x)}(y)+I_2^{(x)}(y)},\quad y\in \R^3.
\en

\begin{thm}\label{ID2} Let $D\subset B_R$ for some $R>0$ and we fix some $|x|=R$. Then it holds that
\ben
I^{(x)}(y)%:=\int_{\R}u^{\infty}(\hat{x},k)\overline{\phi^{(\hat{x})}_{t_{\min}}(y,k)}\ dk
=\left\{\begin{array}{lll}
0\qquad\quad&&\mbox{for}\quad y\notin\mathcal{A}^{(x)},\\
{\rm finite\; positive\; number} \qquad&&\mbox{for}\quad y
\in \mathcal{A}^{(x)}.
\end{array}\right.
\enn
\end{thm}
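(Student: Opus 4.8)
The plan is to follow, almost verbatim, the far-field argument of Theorems \ref{ID10} and \ref{ID1}, with the plane-wave phase $\hat{x}\cdot y+t$ replaced by the spherical-wave phase $t-|x-y|$ and Lemma \ref{21} replaced by Lemma \ref{lem5}. As a first step I would write the two test functions as Fourier transforms of shifted Dirac masses, exactly as in \eqref{test}, namely
\[
\psi^{(x)}_{1}(y,k)=[\mathcal{F}(\delta(\xi-(t_{\min}-|x-y|)))](k),\qquad
\psi^{(x)}_{2}(y,k)=[\mathcal{F}(\delta(\xi-(t_{\max}-|x-y|)))](k).
\]
Substituting the representation $u(x,k)=(\mathcal{F}h)(k)$ from \eqref{wh}--\eqref{h} into the definition \eqref{near-Ij} and applying Parseval's identity as in \eqref{I1} then collapses each integral to a point evaluation, giving the pointwise formulas
\[
I^{(x)}_1(y)=h\big(t_{\min}-|x-y|\big),\qquad I^{(x)}_2(y)=h\big(t_{\max}-|x-y|\big).
\]

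Next I would read off the supports from Lemma \ref{lem5}, which says that $h$ is supported exactly on $H_0=\big(t_{\min}-\sup_{z\in D}|x-z|,\ t_{\max}-\inf_{z\in D}|x-z|\big)$ and is positive on it. Translating the conditions $t_{\min}-|x-y|\in H_0$ and $t_{\max}-|x-y|\in H_0$ into conditions on the scalar $|x-y|$ (using $T=t_{\max}-t_{\min}$), one finds that $I^{(x)}_1(y)>0$ precisely when $|x-y|\in\big(\inf_{z\in D}|x-z|-T,\ \sup_{z\in D}|x-z|\big)$ and $I^{(x)}_2(y)>0$ precisely when $|x-y|\in\big(\inf_{z\in D}|x-z|,\ \sup_{z\in D}|x-z|+T\big)$, while each $I^{(x)}_j$ vanishes for $|x-y|$ outside the respective interval.

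The conclusion then follows from the elementary identity
\[
\big(\inf_{z\in D}|x-z|-T,\ \sup_{z\in D}|x-z|\big)\cap\big(\inf_{z\in D}|x-z|,\ \sup_{z\in D}|x-z|+T\big)=\big(\inf_{z\in D}|x-z|,\ \sup_{z\in D}|x-z|\big),
\]
so that $y\in\mathcal{A}^{(x)}$ if and only if $|x-y|$ lies in this intersection. For $y\in\mathcal{A}^{(x)}$ both $I^{(x)}_1(y)$ and $I^{(x)}_2(y)$ are finite and positive, whence $I^{(x)}(y)=I^{(x)}_1(y)I^{(x)}_2(y)/(I^{(x)}_1(y)+I^{(x)}_2(y))$ is a finite positive number; for $y\notin\mathcal{A}^{(x)}$, either $|x-y|\le\inf_{z\in D}|x-z|$, forcing $I^{(x)}_2(y)=0$, or $|x-y|\ge\sup_{z\in D}|x-z|$, forcing $I^{(x)}_1(y)=0$, and in either case the reciprocal-sum definition \eqref{Indicator-near} yields $I^{(x)}(y)=0$.

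Almost all of the substance is already contained in Lemma \ref{lem5}; what remains --- the Parseval computation and the interval arithmetic --- is routine. The one place to be careful is the finiteness of $I^{(x)}_j(y)$: the weight $1/(4\pi t)$ in \eqref{h} is harmless because $\Gamma(t)=\emptyset$ unless $t\in[\inf_{z\in D}|x-z|,\ \sup_{z\in D}|x-z|]$, an interval bounded away from $0$ when $\overline{D}\subset B_R$, so the effective $t$-range in \eqref{h} never reaches the origin. I do not expect any genuine obstacle beyond keeping the two shifts $t_{\min},t_{\max}$ and the signs they produce straight.
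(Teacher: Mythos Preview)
Your proposal is correct and follows essentially the same route as the paper's own proof: rewrite $\psi^{(x)}_j$ as Fourier transforms of shifted Dirac masses, use Parseval and \eqref{wh} to obtain $I^{(x)}_1(y)=h(t_{\min}-|x-y|)$, $I^{(x)}_2(y)=h(t_{\max}-|x-y|)$, then invoke Lemma~\ref{lem5} and intersect the two resulting intervals to recover $\mathcal{A}^{(x)}$. Your extra remark on the finiteness of $h$ via the $1/(4\pi t)$ weight being bounded away from zero is a welcome clarification that the paper leaves implicit.
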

\begin{proof} Arguing analogously to the proof of \eqref{test}, we see
\ben
\psi^{(x)}_1(y,k)= [\mathcal{F}(\delta(\xi+|x-y|-t_{\min})](k),\\
\psi^{(x)}_2(y,k)= [\mathcal{F}(\delta(\xi+|x-y|-t_{\max})](k).
\enn
In view of \eqref{wh} and \eqref{h}, we get
\be \label{Inear}
	I^{(x)}_1(y)= h(t_{\min}-|x-y|),\quad
I^{(x)}_2(y)= h(t_{\max}-|x-y|).
\en
Hence, using the results of Lemma \ref{lem5} yields

\ben\label{In3}
I^{(x)}_1(y)%:=\int_{\R}u^{\infty}(\hat{x},k)\overline{\phi^{(\hat{x})}_{t_{\min}}(y,k)}\ dk
=\left\{\begin{array}{lll}
0\qquad\quad&&\mbox{for}\quad |x-y| \notin \big(\inf_{z\in D}|x-z|-T, \sup_{z\in D}|x-z|\big),\\
 {\rm finite\; positive\; number} \qquad&&\mbox{for}\quad |x-y| \in \big(\inf_{z\in D}|x-z|-T, \sup_{z\in D}|x-z|\big);
\end{array}\right. \\ \label{In4}
I^{(x)}_2(y)=\left\{\begin{array}{lll}
%:=\int_{\R}u^{\infty}(\hat{x},k)\overline{\phi^{(\hat{x})}_{t_{\max}}(y,k)}\ dk
 0\qquad\quad&&\mbox{for}\quad
 |x-y| \notin \big(\inf_{z\in D}|x-z|, \sup_{z\in D}|x-z|+T\big),\\
 {\rm finite\; positive\; number}\qquad&&\mbox{for}\quad
 |x-y| \in \big(\inf_{z\in D}|x-z|, \sup_{z\in D}|x-z|+T\big).
 \end{array}\right.
 \enn
Now, the indicating behavior of $I^{(x)}$ follows from the fact $\supp I^{(x)}:=\supp \, I^{(x)}_1 \cap \supp \, I^{(x)}_2=(\inf_{z\in D}|x-z|, \sup_{z\in D}|x-z|)$.
\end{proof}
Using the multi-frequency data taking at sparse observation positions $x_m\in\partial B_R$ ($m=1,2,\cdots, M$), one can design a new indicator function via superposition for imaging the intersection $\bigcap_{m=1}^M \mathcal{A}^{(x_m)}$. We omit the details for brevity, since it can be done analogously to the far-field case in 2D.

\subsection{Numerical tests using near-field measurements in $\R^3$}
In this section, we present numerical reconstructions of the source support  $D\subset\R^3$ from the multi-frequency data $\left\lbrace u(x_m,  k): x_m\in \pa B_R,k\in(0,K),m=1,2, \cdots , M \right\rbrace $ at sparse observation positions. Here $K>0$ is a truncated number.
In view of Theorem \ref{ID2},  the smallest annular domain containing  the support and centered at $x$ can be restored using the indicator $I^{(x)}$ (\ref{Indicator-near}) from
the near-field data $u(x,k)$, $k\in(0,K)$. To discretize the integral with respect to wavenumbers, we take $$ k_n:=n \Delta k\;\;\mbox{with}\;\; \Delta k:= \frac{K}{N}>0.$$ The indicator functions $I_j^{(x)}(y)$ defined by (\ref{near-Ij}) can be approximated by
\ben
I_j^{(x)}(y) \sim 2\mbox{Re}\left \{ \int_0^K u(x,k) \ov {\psi^{(x)}_{j}(y,k)}\,dk\right\},\; j=1,2.
\enn
Then the source support $D$ can be imaged by plotting the indicator
%Just like the implementation for far-field data, $ch(D)$ the convex hull of the support $D$ of the source can be approximated by the intersections of the annular $\mathcal A ^{(x_m)}=\{ y\in\R^3: \inf_{z\in D} |x_m-z|<|x_m-y|<\sup_{z\in D} |x_m-z|\}$, where $|x_m|=R$ and  $m=1, 2,...,M$. Hence, we discuss the reconstruction with multiple observation points and introduce the generalized indicator function
\be \label{ID-near}
 I(y)=\left[\sum_{m=1}^{M}\dfrac{1}{I^{(x_m)}(y)}\right]^{-1}=\left[\sum_{m=1}^{M}
 \frac{I_1^{(x_m)}(y)+I_2^{(x_m)}(y)}{I_1^{(x_m)}(y)\; I_2^{(x_m)}(y)}\right]^{-1},\quad y\in \R^3.
  \en
Similarly to the far-field case (see Theorem \ref{ID2}), we have the following result with sparse observation positions.

\begin{thm}
Suppose that the positivity condition \eqref{F} holds on the domain $D\subset B_R$. Then it holds that
\be%\label{Indicator2}
I(y)=\left\{\begin{array}{lll}
0\qquad\quad&&\mbox{for}\quad y\notin \bigcap_{m=1}^M \mathcal{A}^{(x_m)},\\
{\rm finite\; positive\; number} \qquad&&\mbox{for}\quad y
\in \bigcap_{m=1}^M \mathcal{A}^{(x_m)}.
\end{array}\right.
\en
\end{thm}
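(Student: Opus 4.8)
The plan is to reduce the multi-observation statement to the single-observation result already proved in Theorem~\ref{ID2} (the near-field version), exactly as in the final far-field theorem of Section~\ref{sec:4}. First I would recall that the combined indicator is the harmonic-mean-type superposition
\ben
I(y)=\left[\sum_{m=1}^{M}\frac{1}{I^{(x_m)}(y)}\right]^{-1},
\enn
so that $I(y)>0$ precisely when \emph{every} term $1/I^{(x_m)}(y)$ is finite, i.e. when $I^{(x_m)}(y)>0$ for all $m$, and $I(y)=0$ as soon as a single term $1/I^{(x_\ell)}(y)$ is infinite, i.e. $I^{(x_\ell)}(y)=0$ for some $\ell$. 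This is a purely algebraic observation about the reciprocal-sum combination and requires no analysis.

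Next I would invoke Theorem~\ref{ID2} at each observation point $x_m\in\partial B_R$: it gives $I^{(x_m)}(y)\in(0,\infty)$ if and only if $y\in\mathcal{A}^{(x_m)}$, and $I^{(x_m)}(y)=0$ otherwise. Combining this with the algebraic fact above: if $y\in\bigcap_{m=1}^M\mathcal{A}^{(x_m)}$, then $0<I^{(x_m)}(y)<\infty$ for every $m$, hence the finite sum $\sum_m 1/I^{(x_m)}(y)$ is a finite positive number and its reciprocal $I(y)$ is a finite positive number. Conversely, if $y\notin\bigcap_{m=1}^M\mathcal{A}^{(x_m)}$, there is some index $\ell$ with $y\notin\mathcal{A}^{(x_\ell)}$, whence $I^{(x_\ell)}(y)=0$, the term $1/I^{(x_\ell)}(y)=+\infty$ dominates the sum, and therefore $I(y)=0$. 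This establishes the dichotomy claimed in the statement.

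There is essentially no hard part here; the theorem is a formal corollary of Theorem~\ref{ID2}. The only points that deserve a word of care are: (i) the implicit convention, inherited from the definition \eqref{Indicator-near} and \eqref{ID-near}, that $1/0=+\infty$ and $1/(+\infty)=0$, so that the formula for $I(y)$ is well defined for all sampling points $y\in\R^3$; and (ii) the fact that the sum over $m$ is finite, so no issue of convergence or interchange of limits arises. I would also note that, just as in the far-field discussion, in practice the indicator in \eqref{ID-near} is evaluated using the truncated, real-part form $I_j^{(x_m)}(y)\sim 2\,\mathrm{Re}\{\int_0^K u(x_m,k)\overline{\psi_j^{(x_m)}(y,k)}\,dk\}$, and the statement as written refers to the idealized untruncated indicators \eqref{near-Ij}; the truncated version is then interpreted as an approximation, in the same spirit as Remark~\ref{rem4.1} and the remark following it. With these conventions fixed, the proof is complete.
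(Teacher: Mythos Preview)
Your proposal is correct and matches the paper's approach: the paper does not actually write out a proof for this theorem but simply states that it follows ``similarly to the far-field case,'' and the far-field analogue (the $\Theta_D$ theorem in Section~\ref{sec:4}) is proved exactly as you describe, by invoking the single-observation result at each $x_m$ and using the reciprocal-sum structure of $I$. Your remarks on the $1/0=+\infty$ convention and the distinction between the idealized and truncated indicators are appropriate elaborations that the paper leaves implicit.
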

%With all observation points $|x|=R$, one can easily obtain the following uniqueness results like the case for far-field data.
%\begin{thm} \label{thm:near}
%Suppose that the positivity condition \eqref{F} holds on the domain
%$D\subset \R^3$ and the radiating time points $t_{\min}$ and $t_{\max}$ are given. Then multi-frequency near-field  $\left\lbrace u(x,  k): \forall\;|x|=R, k\in(0,K)\right\rbrace $ with some K>0 uniquely determine the convex hull of $D$.
%\end{thm}

Unless otherwise stated, we take $K=20$ and $N=100$ as default in the following experiments.

\textbf{Example 1}
In this example, we illustrate the reconstructions of the annular $\mathcal A ^{ (x)} $ for a cube by plotting the indicator $I^{(x)}(y)$ in (\ref{Indicator-near}). We set the time dependent source function to be  $F(x,t)=(x_1^2+x_2^2+x_3^2+1)(t+1)$  which satisfies the positivity condition (\ref{F}). We assume that  $F$ is supported in $D\times(t_{\min},t_{\max})$ with $t_{\min}=0$ and $t_{\max}=0.1$. The cube $D$ is defined by (see Figure \ref{fig:3d-cube} (a))
\ben
D=\{(x_1,x_2,x_3): |x_1|<1, |x_2|<1\; \mbox{and}\; |x_3|<1 \}.
\enn
We take the observation point at $x=(3,0,0)$ and plot the indicator in the search domain  $[-3, 3]^3$. Figures \ref{fig:3d-cube} (b) and (c) show visualizations of the indicator function (\ref{Indicator-near}) in the search area.    Fig.\ref{fig:3d-cube} (b) shows a slice of the reconstruction at $y_2=0$. We see that the cross of the plane $y_2=0$ with the smallest annular containing the square (in red) and centered at $x=(3,0, 0)$ is nicely reconstructed.  Fig.\ref{fig:3d-cube} (c) illustrates an iso-surface of the reconstruction at the iso-level $1.8\times10^{-2}$.  The iso-surfaces perfectly enclose the cube-shaped support.
\begin{figure}[H]
	\centering
	\subfigure[Geometry of source support]{
		\includegraphics[scale=0.22]{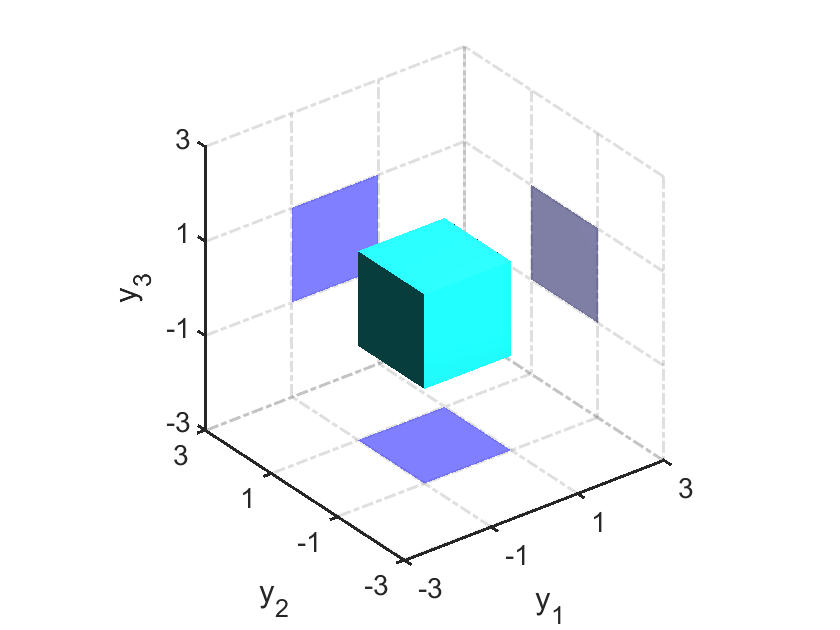}
	}
	\subfigure[Slice at $y_2=0$]{
		\includegraphics[scale=0.22]{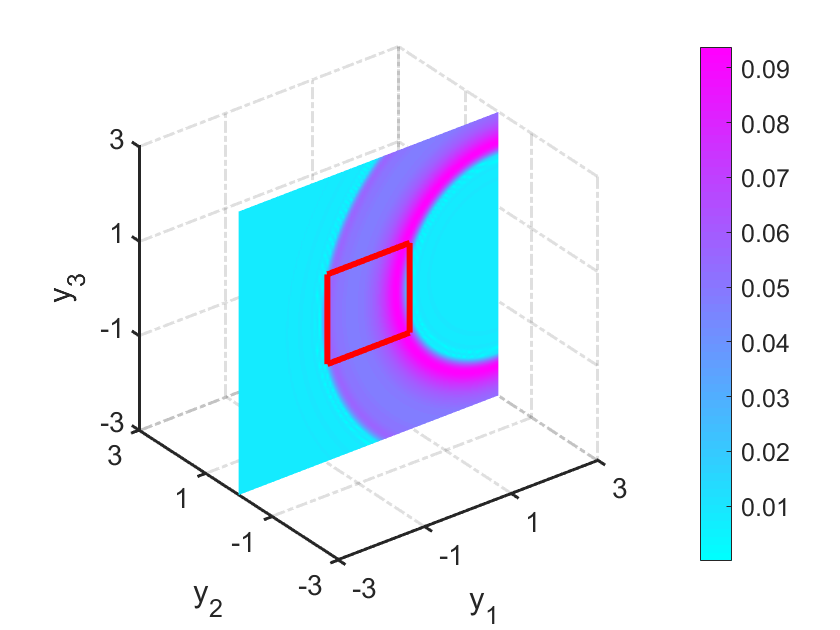}
		
	}
	\subfigure[Iso-surface level $=1.8\times 10^{-2}$]{
		\includegraphics[scale=0.22]{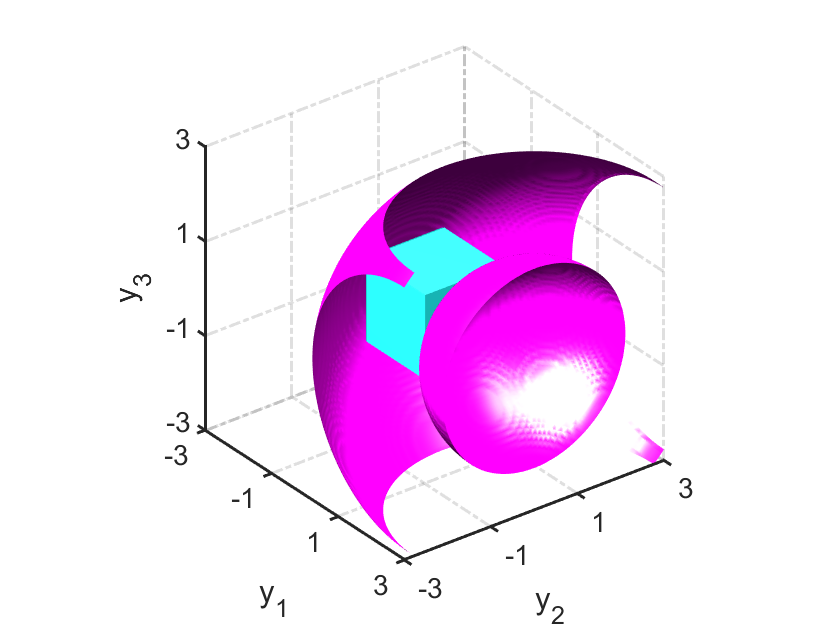}
		
	}
	\caption{ Reconstruction of a cube-shaped support using multi-frequency data from a single observation point.
	} \label{fig:3d-cube}
\end{figure}

\textbf{Example 2}.
We continue \textbf{Example 1} with multiple observation points. A visualization of the indicator function  (\ref{ID-near}) is shown in Fig.\ref{fig:3d-rec} with $M$ observation points. The multi-frequency data at two observation points $\{ (3,0,0), (-3,0,0)\}$ are utilized in Fig.\ref{fig:3d-rec} (a), and the data at four observation points $\{ (3,0,0), (-3,0,0), (0,3,0), (0,-3,0)\}$ in Fig.\ref{fig:3d-rec} (b), where the position of the source is nicely reconstructed. Using six observation points $\{ (3,0,0), (-3,0,0), (0,3,0), (0,-3,0), (0,0,3), (0,0,-3)\}$ in Fig.\ref{fig:3d-rec} (c), we can see that both the shape and location of the source support are well reconstructed.

\begin{figure}[H]
	\centering
	\subfigure[$M=2$]{
		\includegraphics[scale=0.22]{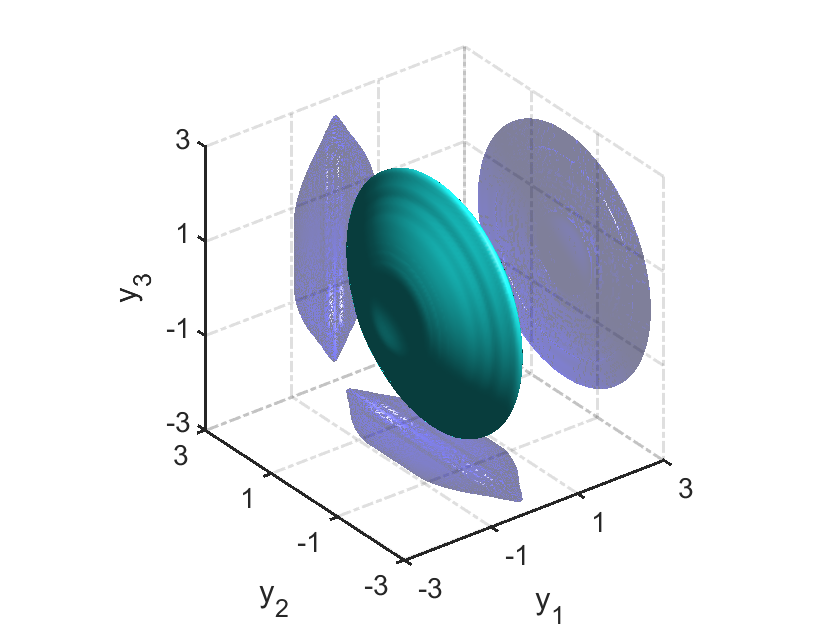}
	}
	\subfigure[$M=4$]{
		\includegraphics[scale=0.22]{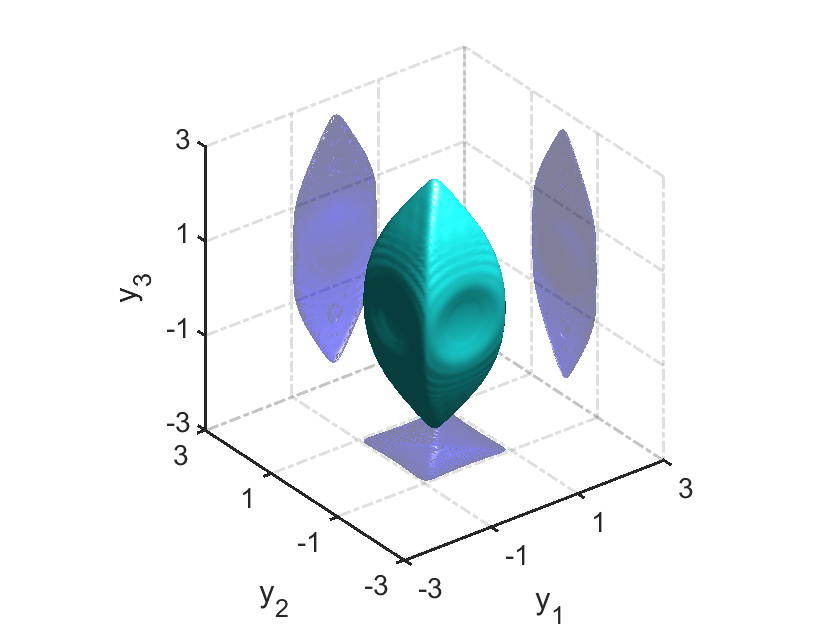}
		
	}
	\subfigure[$M=6$]{
		\includegraphics[scale=0.22]{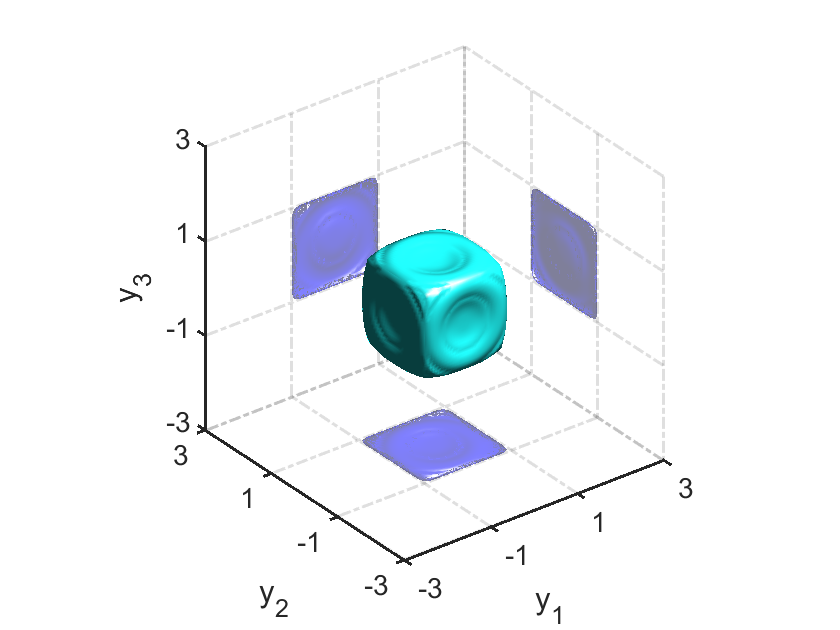}
		
	}
	\caption{Reconstruction of a cube-shaped support using multi-frequency data from multiple observation points. The number of observation points is $M=2$, $4$, $6$  and  the corresponding iso-surface levels are $2.5\times10^{-2}$, $1.2\times10^{-2}$ and $7\times10^{-3}$, respectively.
	} \label{fig:3d-rec}
\end{figure}
To clearly illustrate the reconstructions in Fig.\ref{fig:3d-rec}, we also plot the projections of the images onto the $oy_1 y_2$, $oy_1 y_3$ and $oy_2 y_3$ planes. From these 2D visulizations one sees that the projections at $y_1-$axis are $[-1,1]$ in Fig.\ref{fig:3d-rec} (a), the projections on $0y_1y_2$ is square $[-1,1]^2$ in Fig.\ref{fig:3d-rec} (b) and the projections are all squares $[-1,1]^2$ in Fig.\ref{fig:3d-rec} (c), which prove the accuracy of the 3D reconstructions.
Fig.\ref{fig:3d-slice} shows  slices of the reconstruction at the planes $y_1=0$, $y_2=0$ and $y_3=0$ using the data at six observation points (see Fig.\ref{fig:3d-rec} (c)). For comparison we also demonstrate the boundary of the source support's slice with the red solid line. These slices also confirm the accuracy of our algorithm.
%%%\begin{figure}[htp]
%%%	\centering
%%%	\subfigure[Projection onto $oy_1 y_2$-plane]{
%%%		\includegraphics[scale=0.3]{Fig-3d/fxt-iso-xy.eps}
%%%	}
%%%	\subfigure[Projection onto $oy_1 y_3$-plnae]{
%%%		\includegraphics[scale=0.3]{Fig-3d/fxt-iso-xz.eps}
%%%		
%%%	}
%%%	\subfigure[Projection onto $oy_2 y_3$-plane]{
%%%		\includegraphics[scale=0.3]{Fig-3d/fxt-iso-yz.eps}
%%%		
%%%	}
%%%	\caption{Projections of the reconstruction from six observation points at the iso-surface level $=7\times10^{-3}$  onto $oy_1 y_2$, $oy_1 y_3$ and $oy_2 y_3$ planes.}\label{fig:3d-proj}
%%%\end{figure}

\begin{figure}[htp]
	\centering
	\subfigure[Slice at $y_3=0$]{
		\includegraphics[scale=0.22]{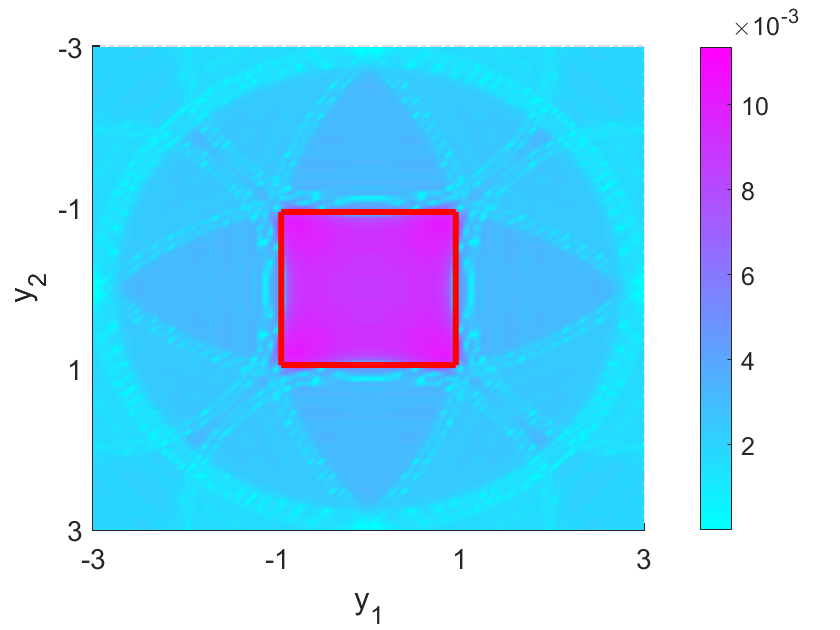}
	}
	\subfigure[Slice at $y_2=0$]{
		\includegraphics[scale=0.22]{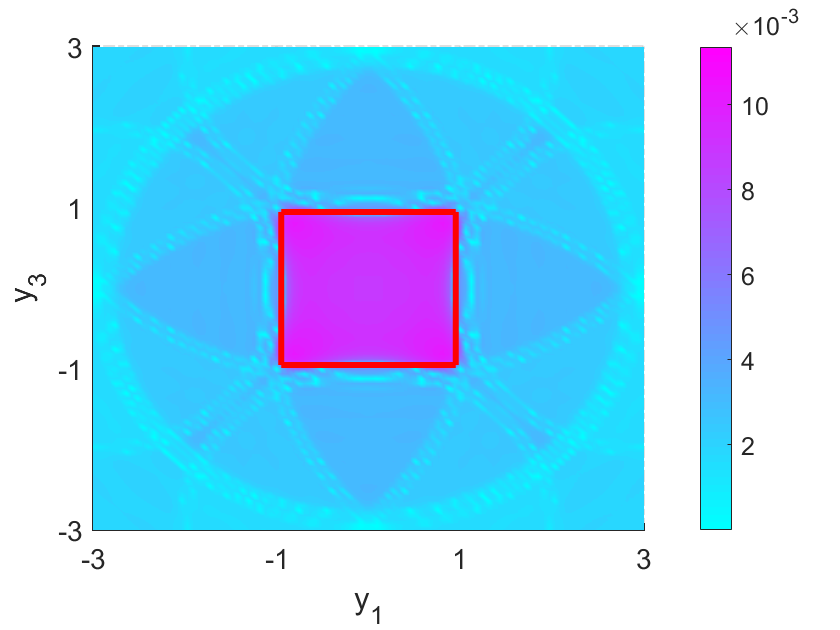}
		
	}
	\subfigure[Slice  at $y_1=0$]{
		\includegraphics[scale=0.22]{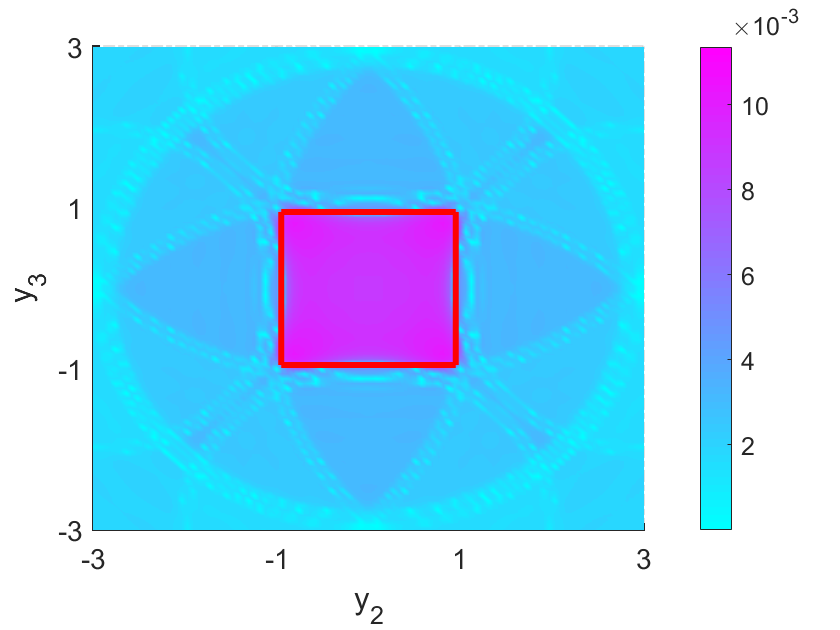}
		
	}
	\caption{Slices of the reconstruction from six observation points at the planes $y_3=0$, $y_2=0$ and $y_1=0$.
	} \label{fig:3d-slice}
\end{figure}

%***************************************************************
In Figs. \ref{fig:3d-LT} and \ref{fig:3d-LT1}, we show slices of the reconstructions of a cubic source with a longer radiating period $(t_{\min}, t_{\max})$. Different Fourier transform windows from the data measured at six observation points (like the case in Fig.\ref{fig:3d-rec} (c)) are used. The radiating period (resp. Fourier transform window) is taken as $(0,0.5)$ in Fig.\ref{fig:3d-LT} and  $(0,1)$ in Fig. \ref{fig:3d-LT1}. It can be observed that, even for a long duration $T=t_{\max}-t_{\min}$, satisfactory inversions for capturing the shape and location of the source can be achieved.
However, the size of the support becomes distorted as $T$ increases. The quality on reconstructing the size can be improved by increasing the number of frequencies. The longer  the time-dependent source lasts, the more the number of frequencies is needed. Of course the number of observation points affects the resolution of the reconstructions as well. In Fig.\ref{fig:3d-LT1N}, we use more multi-frequency data to improve the quality of reconstruction in Fig.\ref{fig:3d-LT1}. Here we take $N=400$ and $K=20$. It is observed that the cube is nicely restored with more multi-frequency data.

\begin{figure}[H]
	\centering
	\subfigure[Iso-surface level $=2.9\times10^{-2}$]{
		\includegraphics[scale=0.22]{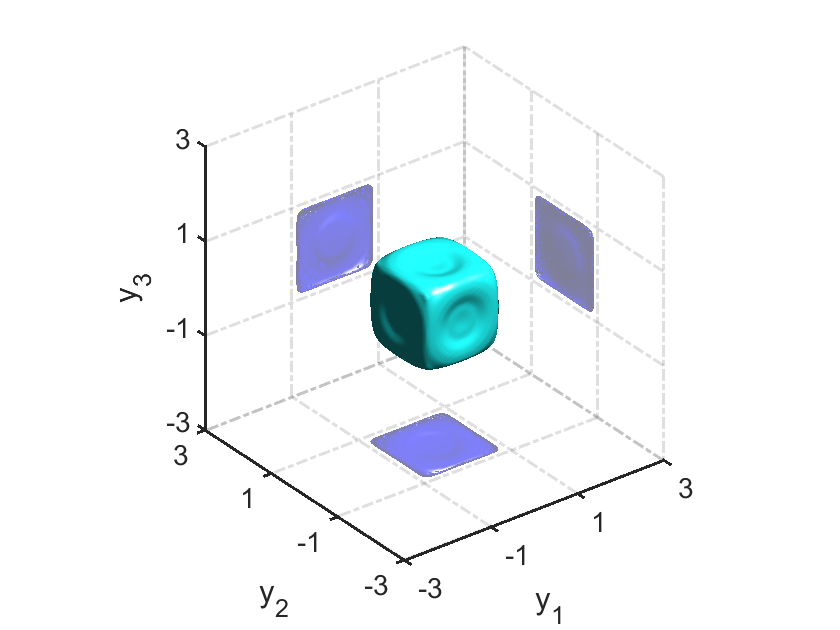}
	}
	\subfigure[Slice at $y_2=0$]{
		\includegraphics[scale=0.22]{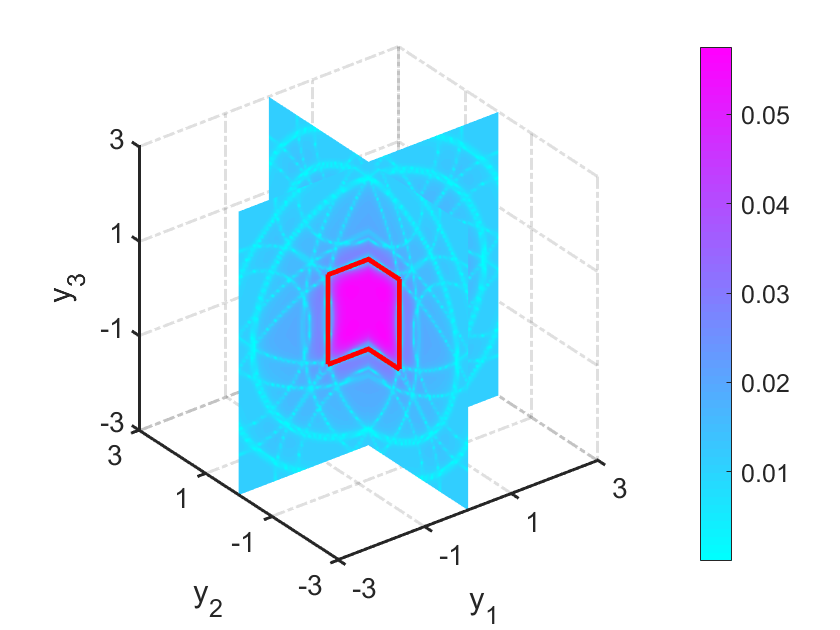}
		
	}
	\subfigure[Slice at $y_1=0$]{
		\includegraphics[scale=0.22]{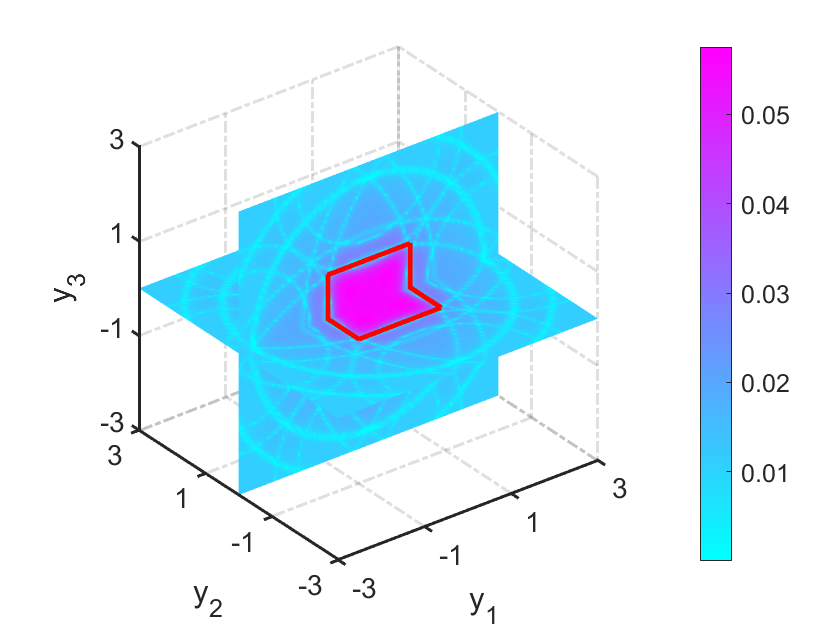}
		
	}
	\caption{Reconstruction of a cube-shaped support using multi-frequency data from six observation points. The Fourier transform window is $(0,0.5)$. } \label{fig:3d-LT}
\end{figure}
%
%\begin{figure}[H]
%	\centering
%	\subfigure[iso-surface]{
%		\includegraphics[scale=0.3]{Fig-3d/fxt-T-05-cube.eps}
%	}
%	\subfigure[slice $y_2=0$]{
%		\includegraphics[scale=0.3]{Fig-3d/fxt-T-05-slice-xy0.eps}
%		
%	}
%	\subfigure[slice $y_1=0$]{
%		\includegraphics[scale=0.3]{Fig-3d/fxt-T-05-slice-yz0.eps}
%		
%	}
%	\caption{  $T=0.5$, iso-surface level $0.029$ and slice at $y_3=0$ and $y_1=0$ for $6$ measurement points
%	} %\label{fig:3d-cube}
%\end{figure}

\begin{figure}[H]
	\centering
	\subfigure[Iso-surface level $=6\times10^{-2}$]{
		\includegraphics[scale=0.22]{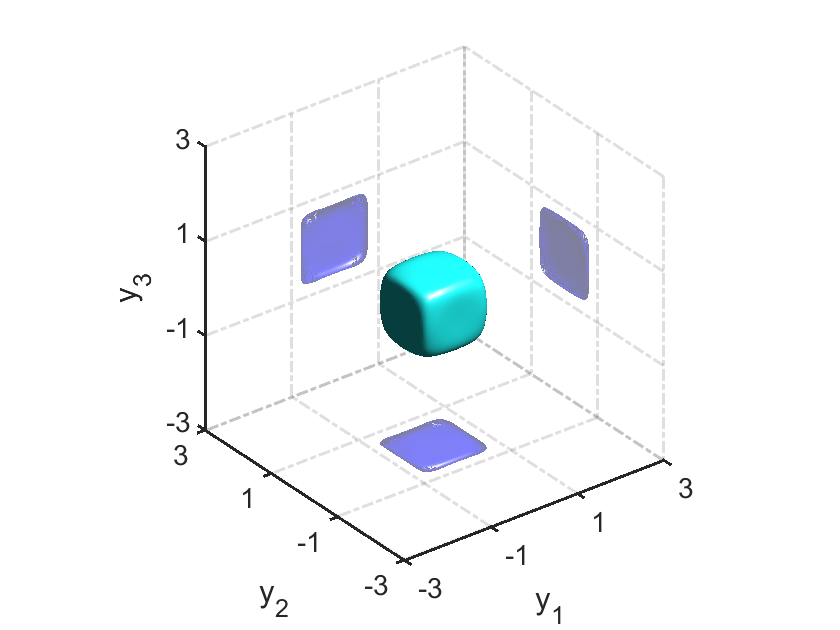}
	}
	\subfigure[Slice at $y_2=0$]{
		\includegraphics[scale=0.22]{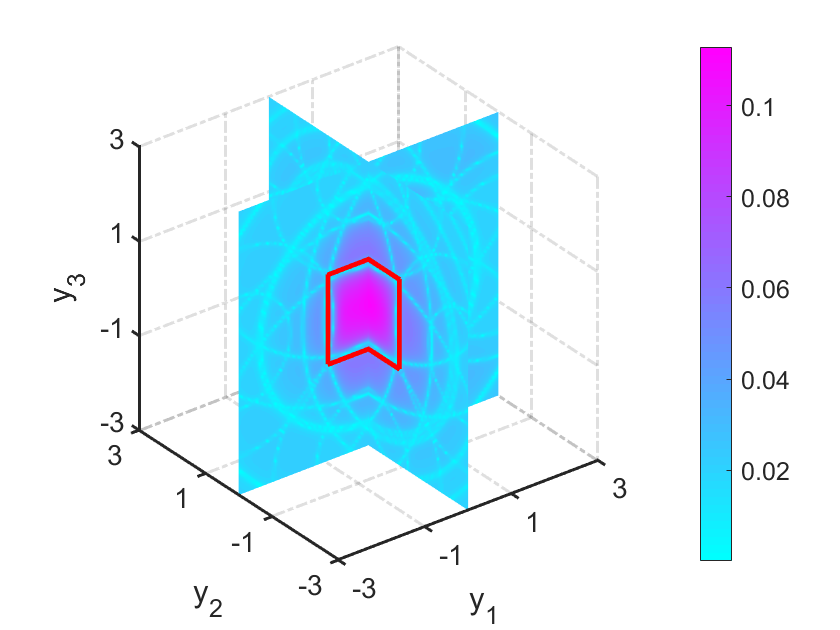}
		
	}
	\subfigure[Slice at $y_1=0$]{
		\includegraphics[scale=0.22]{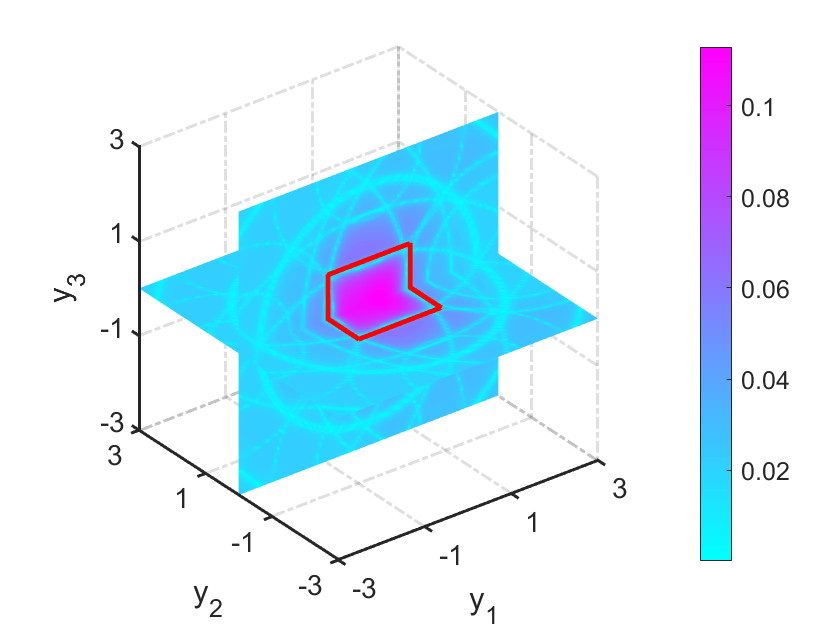}
		
	}
	\caption{Reconstruction of a cube-shaped support using multi-frequency data from six observation points. The Fourier transform window is $(0,1)$. } \label{fig:3d-LT1}
\end{figure}

\begin{figure}[H]
	\centering
	\subfigure[Iso-surface level $=2.68\times10^{-2}$]{
		\includegraphics[scale=0.22]{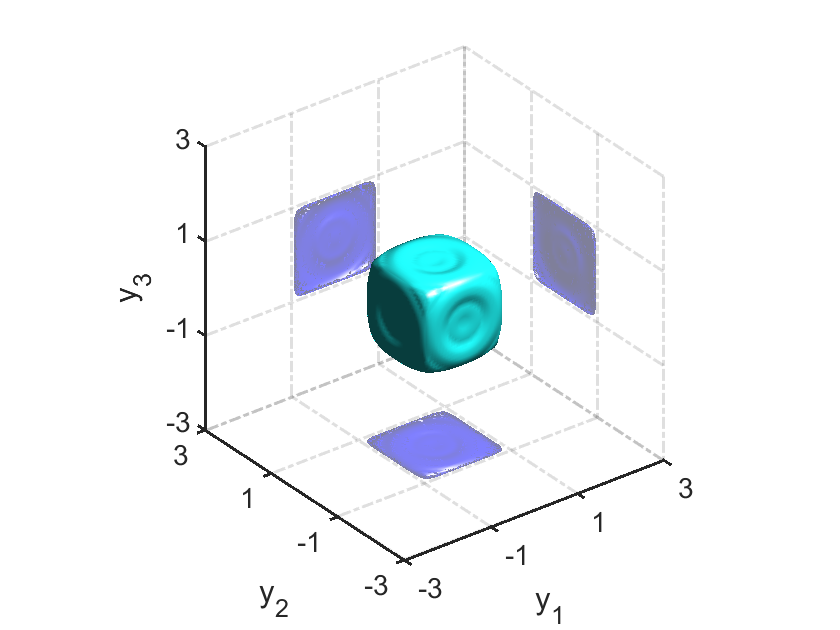}
	}
	\subfigure[Slice at $y_2=0$]{
		\includegraphics[scale=0.22]{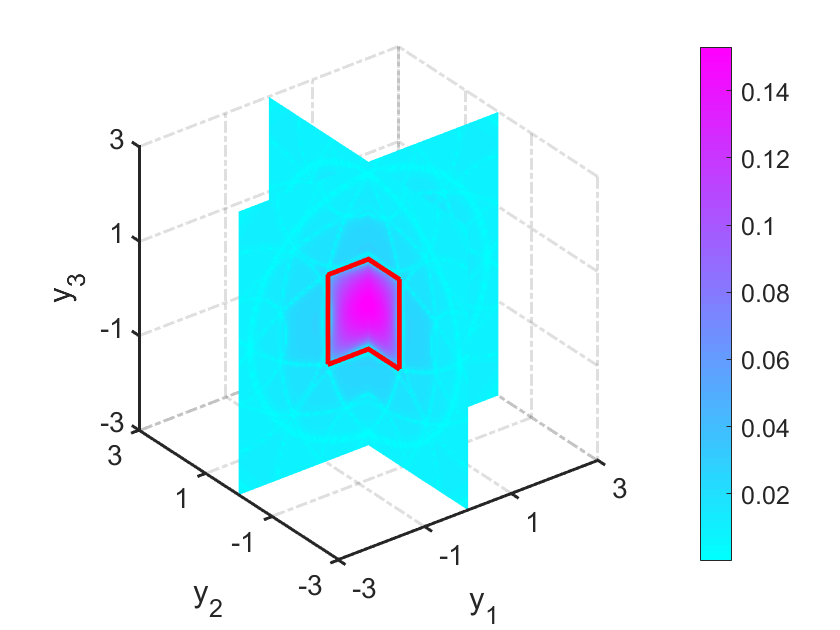}
		
	}
	\subfigure[Slice at $y_1=0$]{
		\includegraphics[scale=0.22]{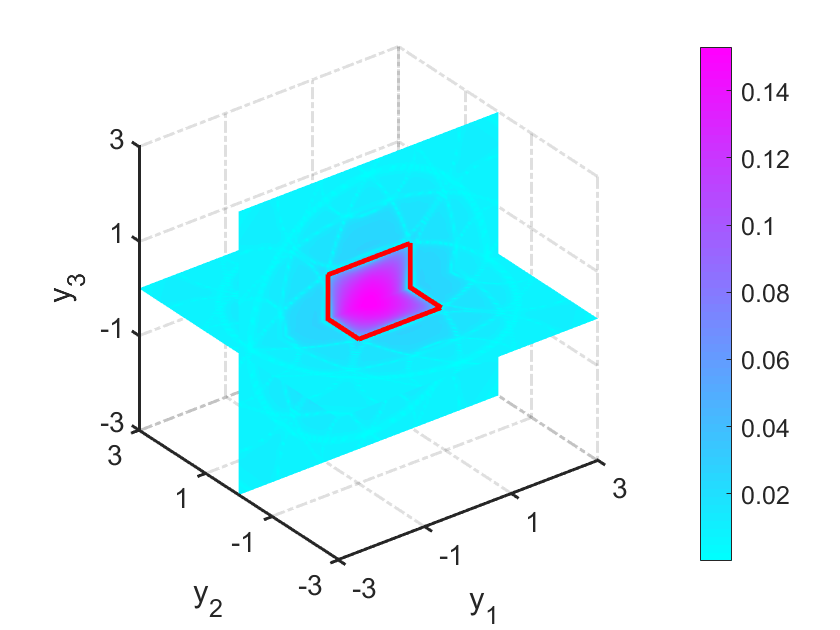}
		
	}
	\caption{Reconstruction of a cube-shaped support using more multi-frequency data from six observation points. The Fourier transform window is $(0,1)$. Here we take $N=400$.} \label{fig:3d-LT1N}
\end{figure}

%\begin{rem}

%\end{rem}

\section*{Acknowledgements}
G. Hu is partially supported by the National Natural Science Foundation of China (No. 12071236) and the Fundamental Research Funds for Central Universities in China (No. 63213025).

\end{document}